\newcommand{\norm}[1]{\left\Vert#1\right\Vert}
\newcommand{\abs}[1]{\left\vert#1\right\vert}
\newcommand{\qed}{\hfill $\square$}
\begin{document}

\markboth{J.~JIANG, Y.~SHI, X.~WANG AND X.~CHEN}{Two-stage stochastic variational inequalities for Nash equilibrium}

\title{REGULARIZED TWO-STAGE STOCHASTIC VARIATIONAL INEQUALITIES FOR COURNOT-NASH EQUILIBRIUM UNDER UNCERTAINTY}

\author{Jie Jiang, Yun Shi, Xiaozhou Wang and Xiaojun Chen
\thanks{Department of Applied Mathematics, The Hong Kong Polytechnic University, Hong Kong, China \\ Email: jie.jiang@connect.polyu.hk, yun.shi@connect.polyu.hk, xzhou.wang@connect.polyu.hk, xiaojun.chen@polyu.edu.hk}
}

\maketitle

\begin{abstract}
A convex two-stage non-cooperative multi-agent game under uncertainty is formulated as a two-stage stochastic variational inequality (SVI).
Under standard assumptions, we provide sufficient conditions for the existence of solutions of the two-stage SVI and propose a regularized sample average approximation method for solving  it.
We prove the convergence of the method as the regularization parameter tends to zero and the sample size tends to infinity.
Moreover, our approach is applied to a two-stage stochastic production and supply planning problem with homogeneous commodity in an oligopolistic market.
Numerical results based on historical data in crude oil market are presented to demonstrate the effectiveness of the two-stage SVI in describing the market share of oil producing agents.
\end{abstract}

\begin{classification}
90C15, 90C33
\end{classification}

\begin{keywords}
Two-stage stochastic variational inequalities, Cournot-Nash equilibrium, Regularized method, Progressive hedging method, Uncertainty, Oil market share
\end{keywords}

\section{Introduction}
\label{sec:intro}
We consider a two-stage non-cooperative multi-agent game under uncertainty.
The model is used to describe a homogenous commodity production and supply planning problem in an oligopolistic market.
In particular, we focus on a $J$-player stochastic Nash equilibrium problem (SNEP) of Cournot competition, whose solution concept is characterized by stochastic Cournot-Nash (CN) equilibria.
Conventional non-cooperative game theory has a long history being an effective model to describe market behaviour, yet mostly under deterministic settings.
In order to explore characteristics of real markets, one cannot neglect the presence of uncertainty.

Researchers have studied various real markets through SNEPs in recent years.
Jofr{\'e}, Rockafellar and Wets \cite{JRW2007variational} investigated various economic equilibria using SVIs.
A scenario-based multi-stage oligopolistic market equilibrium problem under uncertainty was discussed in \cite{GRS2007dynamic}.
A two-settlement oligopolistic equilibrium with uncertainty in the future market was presented in \cite{YAO2008modeling}.
For practical applications, electricity markets with hydro-electric distribution have been studied by Philpott, Ferris and Wets \cite{PFW2016equilibrium} in which the levels of water reserves were modelled under uncertainty.
The contemporary treatment of classical equilibrium problems are investigated through finite-dimensional variational inequalities (VIs) and complementarity problems (CPs) with a wide range of applications under the assumption of deterministic and single-stage decision, see \cite{HP1990finite,FP2007finite} and references therein.
Yet, many practical applications are formulated under uncertainty \cite{HP2007nash,CWZ2012stochastic,PSL2015constructive,XPOD2015complementarity,CSX2017discrete,PSS2017two}, and the corresponding multi-stage SVIs and CPs were studied extensively over the last two decades.
In \cite{CF2005expected}, Chen and Fukushima considered the stochastic linear CP (LCP) by expected residual minimization (ERM) procedure.
The quasi-Monte Carlo method was adopted to generate scenarios of observation and thus to obtain the discrete approximation problem.
Chen, Wets and Zhang \cite{CWZ2012stochastic} investigated SVI problems by the ERM procedure, and the sample average approximation (SAA) method was employed to approximate the expected smoothing residual function.
More recently, as an extension from single-stage to multi-stage decision processes, Rockafellar and Wets \cite{RW2017stochastic} put forward multi-stage SVI problems, which laid a solid foundation for further research.
In \cite{CPW2017two}, Chen, Pang and Wets introduced the ERM procedure for two-stage SVI problems and the Douglas-Rachford splitting method was used to present numerical results.
Chen, Sun and Xu \cite{CSX2017discrete} considered  a two-stage stochastic LCP.
Structural properties of the problem were studied under the assumption of strong monotonicity, and a discrete scheme was conducted by partition of the support set and the corresponding convergence assertion was established.
More generally, Chen, Shapiro and Sun \cite{CSS2018convergence} investigated  the SAA of two-stage stochastic nonlinear generalized equations, which included two-stage nonlinear SVI problems as special cases.
Exponential rate of convergence was derived by using the technique of perturbed partial linearization.
From the perspective of the numerical calculation, the equilibrium problems are usually rewritten as a minimization problem, mostly nonsmooth.
For this class of problems, the smoothing techniques (see \cite{C2012smoothing}) can be employed so that differentiable methods, e.g., Newton's method, become applicable in solving the smoothing problem, see for instance \cite{CY1999homotopy,CY2000smoothing,CF2004smoothing,CX2013newton}.
For numerical implementation, Rockafellar and Sun extended the well-known progressive hedging method (PHM) for multi-stage stochastic programming problems to multi-stage  SVIs in \cite{RS2018solving}.

The decision vectors of production and supply plan problems, so-called \emph{strategies}, are distinguished into two categories: (i) those of ``here-and-now'' type, which do not depend on outcomes of random events in the future, and (ii) those treated as responses.
The goal of this paper is to establish a model that describes the market mechanism for Cournot competition under uncertainties.
The solution concepts of the model assemble the real strategies adopted by participants of the market.
We will provide methodology to solve the proposed model and demonstrating its numerical implementation.

We summarize the main contributions of this paper as follows.
\begin{itemize}
\item A two-stage stochastic Nash equilibrium problem is proposed to model production and supply competition of a homogenous product under uncertainty in an oligopolistic market.
\item The model is  recast as two-stage SVIs whose solutions characterize a  CN equilibrium.
\item A regularized sample average approximation method is proposed to solve the two-stage SVIs with convergence properties under mild assumptions.
\item The model is tested numerically for its effectiveness.  Moreover, it is used to describe the market share observation in the world market of crude oil.  We show that our model is not only able to reproduce historical in-sample market share but also capable of making out-of-sample predictions based on real data sets.
\end{itemize}
The remaining of the paper is organized as follows. In section 2, the two-stage SNEPs are developed and recast into two-stage SVIs. Section 3 contains structural analysis of two-stage SVIs and the corresponding regularized problems. The convergence theorems of the proposed regularized SAA method are presented in section 4.  Numerical results based on randomly generated data and simulation on real market data of crude oil are presented and analyzed in section 6.  We conclude our study in the final section.

\section{Two-stage stochastic Cournot-Nash game}
All stochastic models involve inherently ``ordered'' components over decision horizons.
In particular, the decisions in a strategy may respond to the information that is available only at the present stage, which is known as \emph{nonanticipativity}.
In this paper, we consider a \emph{two-stage stochastic CN equilibrium problem}, which extends the classical deterministic CN equilibrium problem in \cite{MSS1982mathematical}.
Let $\xi:\Omega\to\mathbb{R}^d$ be a random variable with support set $\Xi\subseteq\mathbb{R}^d$, and let $(\Xi,\mathcal{F},P)$ denote the induced probability space.
A strategy pair of agent $j\in\mathcal{J}:=\{1,...,J\}$ is denoted as
\begin{equation}\label{strategy}
(x_j\in\mathbb{R}, y_j:\Xi\to\mathbb{R},\mathcal{F}\text{-measurable}),
\end{equation}
where $x_j$ is a first-stage decision vector and $y_j\in\mathcal{Y}$ represents a second-stage \emph{response function} with $\mathcal{Y}$ being the space of $\mathcal{F}$-measurable functions defined on $\Xi$.
Let $\mathfrak{L}_n$ be the Lebesgue space of $\mathbb{R}^n$-valued functions with $\mathfrak{L}^\infty_n$ denotes the class of measurable essentially bounded functions.
Following a similar treatment as in \cite{CPW2017two}, we further require the second-stage response function of random variable to be essentially bounded, i.e., $y_j\in\mathfrak{L}^\infty_1$.
Collectively, the vector of strategy pairs of all agents can be written as
\begin{equation}\label{strategyvector}
(x\in\mathbb{R}^J, y:\Xi\to\mathbb{R}^J,\mathcal{F}\text{-measurable}).
\end{equation}
A \emph{strategy pair} $\big(x_j^*,y_j^*\big)\in\mathbb{R}\times\mathfrak{L}^\infty_1$ is said to be an equilibrium of our stochastic model if it solves the following problem for all agents $j\in\mathcal{J}$.
\begin{equation}\label{twostage}\begin{aligned}
&\text{maximize}_{(x_j, y_j(\cdot))} \hspace{.25cm}&&W^1_j(x_j,x^*_{-j})+\mathbb{E}\big[W^2_j\big(\xi, y_j(\xi),y^*_{-j}(\xi)\big)\big], &&\text{(objective function)}\\
&\text{subject to}\hspace{1.5cm}&&x_j \in X_j,~ &&\text{(first-stage constraints)}\\
&\hspace{3cm}&&y_j(\xi)\in_{a.s.} Y_j,~~g_j\big(\xi,x_j,y_j(\xi)\big)\leq_{a.s.}0 &&\text{(second-stage constraints)}\\
\end{aligned}\end{equation}
where
\[\begin{aligned}
&x^*_{-j} &&= &&(x^*_1,\ldots,x^*_{j-1},x^*_{j+1},\ldots,x^*_J),\\
&y^*_{-j}(\xi) &&= &&(y^*_1(\xi),\ldots,y^*_{j-1}(\xi),y^*_{j+1}(\xi),\ldots,y^*_J(\xi)),
\end{aligned}\]
and $y_j(\xi)$ denotes the value of response $y_j$ to realization $\xi$\footnote{For ease the exposition, the same notation $\xi$ is used for both a random variable and its realization in $\mathbb{R}^d$ without causing confusion on the context.}, with
\begin{itemize}
\item $W^1_j:\mathbb{R}\times\mathbb{R}^{J-1}\to\mathbb{R}$ a first-stage wealth function of agent $j$, concave and continuously differentiable with respect to $x_j$.
\item $W^2_j:\Xi\times\mathbb{R}\times\mathbb{R}^{J-1}\to\mathbb{R}$ a second-stage wealth function of agent $j$, concave, well-defined and finite.
\item  $X_j, Y_j$  nonempty, closed and convex subsets of $\mathbb{R}$ and the second-stage constraints hold almost surely ($a.s.$).
\item $g_j:\Xi\times\mathbb{R}\times\mathbb{R}\to\mathbb{R}$ a continuously differentiable function with respect to $\big(x_j, y_j(\xi)\big)$ for almost every (a.e.) $\xi\in \Xi$ and $\mathcal{F}$-measurable.
\end{itemize}
In this paper, the model (\ref{twostage}) is formulated under the assumption that the uncertainty can be described by a random variable $\xi$ with known distribution.
From the perspective of the entire system, the market\footnote{A ``social planner'' is commonly termed in literature of economics, and can be interpreted as one individual who oversees the system and assigns strategy to each agents.} ``chooses'' $x\in\mathbb{R}^J$ before a realization $\xi\in\Xi$ is revealed and later ``selects'' $y(\xi)\in\mathbb{R}^J$ with known realization.

\subsection{Two-stage stochastic commodity production and supply planning}
The application of commodity production and supply in an oligopolistic market serves as a motivation as well as the practical problem of interest.
Presented as a stochastic game, the strategy of each agent in supply-side of the market can be described as the solution of a stochastic optimization problem (\ref{twostage}).
The decision process follows that agent $j\in\mathcal{J}$ decides an optimal production quantity $x_j$ of the commodity at the production stage.
At the second-stage, each agent decides a supply quantity $y_j(\xi)$ after $\xi$ is observed, and a total quantity $T(y(\xi)):=\sum_{j=1}^J{y_j(\xi)}$ is supplied to the market.
Our focus on oligopolistic markets requires that the price is dominantly affected by the total supplied quantity in the market $T(y(\xi))$.
Therefore, all the trading occurs at the price $p:\Xi\times\mathbb{R}\to\mathbb{R}_+$, determined by a stochastic inverse demand curve $p(\xi, T(y(\xi))$.
In practice, production and supply quantities are subject to physical restrictions, e.g., capability of production plant, logistic restriction, etc., i.e., $x_j\in X_j$ and ${y_j(\xi)} \in_{a.s.} Y_j$.
More specifically, we have non-negative requirements for both production and supply, $X_j=\mathbb{R}_+$ and $Y_j=\mathbb{R}_+$.
The relations between stage-wise decision variables $x_j$ and $y_j(\xi)$ are captured by constraints $g_j\big(\xi,x_j,{y_j(\xi)}\big)\leq_{a.s.}0$ in (\ref{twostage}).
Essentially, every agent needs to formulate and solve a \emph{two-stage stochastic programming problem with recourse} in the sense of achieving equilibria of a $J$-agents non-cooperative game of the market.
We further require that agent $j$'s supply to the market cannot exceed his/her production quantity, i.e., ${y_j(\xi)}-x_j\leq_{a.s.}0$.
This can be interpreted as the fact that agents may have no stock to start with, or they need to preserve certain reserved quantities prior to each decision process.

The problem can then be viewed from a slight different perspective than that of problem (\ref{twostage}).
As seen from the first-stage, agent $j\in\mathcal{J}$ wants to find a production quantity $x_j\geq0$ to
\begin{equation}\label{GM1}
\text{maximize}~W^1_j(x_j,x^*_{-j})+\mathbb{E}\big[\Phi_j(x_j,x^*_{-j},\xi)\big],
\end{equation}
where,
\begin{equation}\label{GM2}
\Phi_j(x_j,x^*_{-j},\xi) = \sup_{{y_j(\xi)}\geq0}\{ W^2_j(\xi, {y_j(\xi)},{y^*_{-j}(\xi)})~|~x_j\geq {y_j(\xi)},\text{~for a.e.~}\xi\in\Xi\}.
\end{equation}
Objective function in (\ref{GM1}) is regarded as the expected profit of agent $j$'s and problems (\ref{GM1})-(\ref{GM2}) are termed \emph{intrinsic first-stage problem} following that of a related treatment in \cite{RW1976stochastic}.
In particular, the analysis of intrinsic first-stage problem and the stochastic programming problem with recourse in convex case were carried out in a series of studies by Rockafellar and Wets \cite{RW1975stochastic,RW1976stochastic,RW1976stochastic1,RW1976stochastic2} and more recently in \cite{RW2017stochastic}.
The key feature of intrinsic first-stage problem, as well as formulation (\ref{twostage}), is the requirement on precise orders of decision execution, commonly known as the constraints of \emph{nonanticipativity}, which implies the decision variable in each stage should only depend on the information (data) available up to that stage.
In (\ref{GM1})-(\ref{GM2}), the second-stage decisions are explicitly determined after the first-stage decision, provided for each $x_j$ the second-stage problem is well-defined \cite{SDR2009lectures}.
However, the study of optimality condition of (\ref{GM1})-(\ref{GM2}), in the case of a general probability space $(\Xi,\mathcal{F},P)$\footnote{In cases of finitely supported distribution, the equivalence between intrinsic first-stage problem and the original recourse problem can be established, and the optimality condition of the recourse problem can be applied, see for example \cite{RW2017stochastic}.}, is very complicated since one needs to characterize the order of the decision process explicitly.
For ease of analysis, we assume that there exists a multiplier $\lambda_j\in\mathfrak{L}^1_1$ corresponds to second-stage constraint and study the saddle-point condition of the Lagrangian formulation of problem (\ref{twostage}).
It is worth mentioning that the Karush-Kuhn-Tucker (KKT) condition of problem (\ref{twostage}) (see \cite{RW1975stochastic}) introduces a second-stage multiplier $\tilde{\lambda}_j\in(\mathfrak{L}^\infty_1)^*$ for every $j\in\mathcal{J}$ which incorporates the two-stage decision making process.
This can be seen from the fact that any element of the dual space $(\mathfrak{L}^\infty_1)^*$ can be decomposed into a component of $\mathfrak{L}^1_1$ and a ``singular'' component, corresponding to the multiplier of nonanticipativity.
The saddle-point condition is shown to be sufficient and ``almost'' necessary for optimality of problem (\ref{twostage}), and we refer the interested readers to \cite{RW1975stochastic,RW1976stochastic,RW1976stochastic1,RW1976stochastic2,RW2017stochastic} for more details.

The Lagrangian formulation of problem (\ref{twostage}) associated with agent $j$ is of the following form
\[
L_j(x_j,x^*_{-j}, y_j, y^*_{-j}, \lambda_j) = L_j^1(x_j,x^*_{-j}) + \mathbb{E}\big[L_j^2\big(\xi,x_j,y_j(\xi),y^*_{-j}(\xi),\lambda_j(\xi)\big)\big],
\]
where
\[\begin{aligned}
&L_j^1(x_j,x^*_{-j}) = W^1_j(x_j,x^*_{-j}),\\
&L_j^2\big(\xi,x_j,y_j(\xi),y^*_{-j}(\xi),\lambda_j(\xi)\big) = W^2_j(\xi, y_j(\xi),y^*_{-j}(\xi)) + \lambda_j(\xi)(x_j-y_j(\xi)).
\end{aligned}\]
%The strategy needs to be chosen such that, $x_j\geq0$, $y_j\in\mathfrak{L}^\infty_+$ there exists multiplier $\lambda_j\in\mathfrak{L}^1_1$ such that ${\lambda_j(\xi)} \geq_{a.s.}0$.
The constraints ${y_j(\xi)}\leq_{a.s.} x_j$ can be interpreted as the situation under which the profit maximizing supply $y^*_j(\xi)$ of agent $j$ is not necessarily equal to the total production quantity $x_j$.
This feature of our model differs from conventional requirement on production-clearing condition, i.e., all the produced goods are expected to supply to the market.

In order to make further progress in characterizing the CN equilibrium, we need to specify the structures of our wealth functions, $W^1_j$ and $W^2_j$, suitable for our application.
We assume that the production cost for $j$-th agent is quadratic, i.e., for each $j\in\mathcal{J}$ the cost of producing $x_j$ amount of production is $\frac{1}{2}c_jx^2_j+a_jx_j$, for some $c_j>0, a_j>0$.
In the second-stage, the cost function of the supply or second-stage is assumed to be linear and of stochastic nature, i.e., for each $j\in\mathcal{J}$ the cost of supplying ${y_j(\xi)}$ amount of commodity is ${h_j(\xi)} {y_j(\xi)}$ for a.e. $\xi\in\Xi$. Here, $h_j(\xi)$ can be regarded as the  unite supply cost of agent $j$.
We adopt a classic stochastic inverse demand curve, see for example \cite{HLP2013demand}, that takes the expression $p\big(\xi, T(y(\xi))\big)={p_0}(\xi)-\gamma(\xi) T(y(\xi))$ for the spot price.
In practice, the stochastic \emph{benchmark price} excluding the effect of supply to the market $p_0:\Xi\mapsto\mathbb{R}_{+}$ can be estimated via statistical approaches based on real data.
The supply discount $\gamma:\Xi\rightarrow \mathbb{R}_{+}$ acts as a market mechanism to adjust and reflect uncertainty in quantity in the market.
In order to respect the market mechanism of supply-demand relation, we make the following assumption through out our study.
\begin{assumption}\label{gamma0}
There exists a $\gamma_0>0$ such that $\gamma(\xi)\geq\gamma_0$ for a.e. $\xi\in\Xi$.
\end{assumption}
Thus, agent $j$'s stage-wise wealth functions are,
\[
W^1_j(x_j,x^*_{-j}) = -\frac{1}{2}c_jx_j^2-a_jx_j,
\]
and
\[
W^2_j(\xi, y_j(\xi),y_{-j}(\xi)) = \big(p_j(\xi)-\gamma(\xi)T(y(\xi))\big)y_j(\xi),
\]
where the short-handed notation of the risk-adjusted spot price of agent $j$'s is denoted by $p_j(\xi):= p_0(\xi)-h_j(\xi).$

We are now ready to consider the specific stochastic programming for every agent $j\in \mathcal{J}$ {\color{blue}{:}}
\begin{equation}
\label{M1}
\begin{aligned}
&\text{maximize}_{x_j}~&&\mathbb{E}[\Phi_j(\xi,x)] -\frac{1}{2}c_jx_j^2-a_jx_j\\
&\text{subject to}~&&0\leq x_j,
\end{aligned}
\end{equation}
where
\begin{equation}
\label{M2}
\begin{aligned}
&\Phi_j(\xi,x)=&&\text{maximize}_{y_j(\xi)}&&\Big({p_j(\xi)}-\gamma(\xi) \Big(\sum^{J}_{i\neq j}{y_i^*(\xi)}+{y_j(\xi)} \Big)\Big){y_j(\xi)}\\
&~&&\text{subject to}&&0\leq {y_j(\xi)}\leq x_j, ~ \text{for a.e. $\xi\in \Xi$}.
\end{aligned}
\end{equation}
Note that the requirements in problem (\ref{M2}) hold a.s. in accordance with the a.s. constraints of the second-stage in problem (\ref{twostage}).
However, \eqref{M1}-\eqref{M2} is not easy to solve, especially in a SNEP with $J\geq 2$, see \cite{CSS2018convergence}.
The complication arises since the $j$-th agent's problem contains that of the other agents' strategy, not yet known at the decision horizon.
A commonly used method is to recast problem \eqref{M1}-\eqref{M2} of each agent as a stochastic equilibrium problem.
Then, obtaining an equilibrium of the convex $J$-player game \eqref{M1}-\eqref{M2} is equivalent to finding its solution for all agents.
Stochastic equilibrium has been shown to be an effective method to study and to solve two-stage multi-players stochastic game problems, see for instance \cite{CSX2017discrete,PSS2017two,SSC2017saa,RW2017stochastic,RS2018solving}.
We study the saddle-point condition of the problem (\ref{M1})-(\ref{M2}), rewritten in the form of problem (\ref{twostage}).
More specifically, for all $j\in\mathcal{J}$, there exists $\bar{\lambda}_j(\xi)\in\mathfrak{L}^1_1$ with ${\bar{\lambda}(\xi)}\geq_{a.s.}0$ so that a strategy $\big(\bar{x}_j, \bar{y}_j\big)\in\mathbb{R}_+\times \mathfrak{L}^\infty_+$ solves the following system.
\[\begin{aligned}
& -c_j\bar{x}_j-a_j + \mathbb{E}\big[\bar{\lambda}_j(\xi)\big] \in \mathcal{N}_{[0,\infty)}(\bar{x}_j),\\
& {p_j(\xi)} - \gamma(\xi)\sum^J_{i\neq j} \bar{y}_i(\xi) - 2\gamma(\xi)  \bar{y}_j(\xi) - \bar{\lambda}_j(\xi)\in_{a.s.} \mathcal{N}_{[0,\infty)}(\bar{y}_j(\xi)), &&\text{(stationality)}\\
&\bar{x}_j\geq0, {\bar{y}}_j(\xi)\geq_{a.s.}0, \bar{x}_j-{\bar{y}}_j(\xi)\geq_{a.s.}0, &&\text{(feasibility)}\\
&\bar{\lambda}_j(\xi)\geq_{a.s.}0, &&\text{(dual feasibilty)}\\
&\bar{\lambda}_j(\xi)\bot_{a.s.}(\bar{x}_j-{\bar{y}}_j(\xi)). &&\text{(complementarity)}
\end{aligned}\]
In particular, \emph{stationarity} comes from the first-order necessary optimality condition under the assertion $\partial\mathbb{E}\big[\Phi_j(\xi,x)\big]\subseteq\mathbb{E}\big[\partial_x\Phi(\xi,x)\big]$.
The assertion is discussed in \cite{BCS2017subdifferentiation}, and the above system can be viewed as a weaker condition for optimality.

Rewritten in a compact form as SVI, the optimal strategy-multiplier pair $(x_j,y_j,\lambda_j)\in\mathbb{R}_+\times\mathfrak{L}^\infty_+\times\mathfrak{L}^1_+$ must satisfy,
\begin{equation}\label{twostageSVI}\begin{aligned}
&0\leq ~~~~x_j&&\bot~~~~~c_jx_j+a_j-\mathbb{E}\big[ \lambda_j(\xi)\big] &&\geq ~~~~0,\\
&0\leq_{a.s.} y_j(\xi)&&\bot_{a.s.} -p_j(\xi) + \gamma(\xi)\sum^J_{i\neq j} y_i(\xi) + 2\gamma(\xi) y_j(\xi) + \lambda_j(\xi) &&\geq_{a.s.}0,\\
&0\leq_{a.s.} \lambda_j(\xi)&&\bot_{a.s.}~x_j-y_j(\xi)&&\geq_{a.s.}0.
\end{aligned}\end{equation}

It follows that since all agents in oligopolistic market act non-cooperatively, we write down the equilibrium interpreted as that of the whole system.
More specifically, let $x=(x_1,\ldots,x_J)^T$ be the first-stage decision vectors of the system, and for a.e. $\xi\in\Xi$, the second-stage decision vector $y(\xi)=\big(y_1(\xi),\ldots,y_J(\xi) \big)^T$ and its corresponding multiplier vector $\lambda(\xi)=\big(\lambda_1(\xi),\ldots,\lambda_J(\xi)\big)^T$ are denoted respectively.
Similarly, parameter vectors can be written collectively as $a=(a_1,\ldots,a_J)^T$, $p(\xi)=(p_1(\xi),\ldots,p_J(\xi))^T$.  Then, we can treat the SVI for all agents as a two-stage stochastic complementary problem (SCP):
\begin{equation}
\label{SEQ}
\begin{aligned}
0&\leq x &&\bot ~Cx-  \mathbb{E}[\lambda(\xi)] + a &&\geq 0,\\
0&\leq
\begin{pmatrix}
y(\xi)\\
\lambda(\xi)
\end{pmatrix}
&&\bot
\begin{pmatrix}
\Pi(\xi) & I\\
-I & 0
\end{pmatrix}
\begin{pmatrix}
y(\xi)\\
\lambda(\xi)
\end{pmatrix}
+
\begin{pmatrix}
-p(\xi)\\
x
\end{pmatrix}
&&\geq 0, \quad \text{for a.e. $\xi \in \Xi$,}
\end{aligned}
\end{equation}
where
\[
C=\text{diag}(c_1,c_2,...,c_J),\quad \Pi(\xi)=\gamma(\xi)(ee^T+I).
\]
It follows that for the whole system, a $J$-tuple of strategies $$\big(x^*,y^*,\lambda^*\big) = \big((x_1^*,y_1^*,\lambda_1^*),\ldots,(x_J^*,y_J^*,\lambda_J^*)\big)\in\mathbb{R}^J\times\mathfrak{L}^\infty_J\times\mathfrak{L}^1_J$$ is called a solution of the two-stage SCP (\ref{SEQ}).

\section{Structure of the regularized two-stage SCP}\label{sec:2}
In this section, we focus on characterizing solutions of two-stage stochastic linear complementarity problem \eqref{SEQ}.
From the derivation of first-order necessary optimality conditions of problem (\ref{M1})-(\ref{M2}) and the monotonicity of problem \eqref{SEQ}, we have the following results on existence of solutions.
\begin{proposition}[Theorem 2, \cite{R1965existence}]
\label{p1}
For any fixed pair $(x,\xi)\in \mathbb{R}_+^J\times \Xi$, the second-stage problem \eqref{M2} has a unique solution.
\end{proposition}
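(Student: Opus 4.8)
The plan is the textbook route for maximizing a strictly concave quadratic over a compact interval, together with a remark on why Rosen's theorem is the natural reference. Fix $(x,\xi)\in\mathbb{R}_+^J\times\Xi$ and the competitors' responses $y_{-j}^*(\xi)$, and write $S:=\sum_{i\neq j}y_i^*(\xi)\ge 0$. Then the objective in \eqref{M2}, viewed as a function of the single scalar $t=y_j(\xi)$ with everything else frozen, is the quadratic $\phi(t):=\big(p_j(\xi)-\gamma(\xi)S-\gamma(\xi)t\big)t$, to be maximized over $[0,x_j]$. First I would check that $[0,x_j]$ is nonempty, compact and convex; then that $\phi$ is continuous and strictly concave; the Weierstrass theorem then supplies a maximizer and strict concavity forces it to be the only one.

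The first point is immediate: $0\in[0,x_j]$ because $x_j\ge 0$, and $[0,x_j]$ is a (possibly degenerate) closed bounded interval, hence compact and convex. For the second, $\phi$ is a polynomial, hence continuous, and $\phi''(t)=-2\gamma(\xi)$; by Assumption \ref{gamma0} we get $\phi''(t)\le -2\gamma_0<0$ for a.e.\ $\xi\in\Xi$, so $\phi$ is strictly concave on $\mathbb{R}$, uniformly in $\xi$. A continuous strictly concave function attains its maximum over a nonempty compact convex set at exactly one point, which is the asserted unique solution of \eqref{M2}. (One can of course write it down explicitly as the projection of the unconstrained optimum $(p_j(\xi)-\gamma(\xi)S)/(2\gamma(\xi))$ onto $[0,x_j]$, but that formula is not needed here.)

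It is worth recording the connection to the cited result: reading \eqref{M2} across all $j\in\mathcal{J}$ simultaneously as a concave $J$-person game in $y(\xi)$ on the box $\prod_{j}[0,x_j]$, the associated pseudogradient is the affine map $y\mapsto p(\xi)-\Pi(\xi)y$ with $\Pi(\xi)=\gamma(\xi)(ee^T+I)$; since $ee^T+I$ has eigenvalues $1$ (multiplicity $J-1$) and $J+1$, the matrix $\Pi(\xi)$ is symmetric positive definite with smallest eigenvalue $\ge\gamma_0$, hence the pseudogradient is (uniformly) strictly monotone and Rosen's diagonal strict concavity condition holds, so his Theorems~1 and~2 yield existence and uniqueness of the second-stage equilibrium. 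I do not expect any genuine obstacle in the argument, which is pure verification of convexity/monotonicity; the only points that need a little care are the degenerate case $x_j=0$ and the insistence on Assumption \ref{gamma0} rather than the pointwise condition $\gamma(\xi)>0$, so that strict concavity (equivalently, positive definiteness of $\Pi(\xi)$) is uniform in $\xi$ — a uniformity that will be needed for the measurability and integrability statements in the sequel.
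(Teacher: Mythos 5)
Your proposal is correct, and its closing paragraph is in fact the paper's own route: the paper offers no proof at all beyond the citation of Rosen's Theorem~2, so your verification that the second-stage game is a concave $J$-person game on the compact convex box $\prod_j[0,x_j]$ whose pseudogradient $y\mapsto p(\xi)-\Pi(\xi)y$ is strictly monotone (since $ee^T+I$ has eigenvalues $1$ and $J+1$, so $\Pi(\xi)\succeq\gamma_0 I$ under Assumption~\ref{gamma0}) supplies exactly the diagonal strict concavity hypothesis the citation presupposes. The one caution is about which statement carries the weight. Your first two paragraphs prove uniqueness of agent $j$'s \emph{best response} to fixed rivals' strategies $y^*_{-j}(\xi)$ — a strictly concave scalar quadratic on $[0,x_j]$ — which matches the literal wording of \eqref{M2} but is strictly weaker than what the paper later uses: in the proof of Proposition~\ref{p2} the claim invoked is that the $y$-part of \emph{any} solution of the second-stage LCP$(q,M)$ is unique, i.e.\ uniqueness of the joint second-stage equilibrium across all agents, and the paper's own later rephrasing (``the second-stage problem \eqref{M2} has a unique equilibrium'') confirms this reading. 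So the Rosen/diagonal-strict-concavity argument you present as a ``remark'' should be promoted to the main proof, with the scalar argument at most a warm-up; as written by itself the scalar argument would leave a genuine gap, but since you do include the game-theoretic verification, the proposal as a whole is complete.
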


Thus, for the two-stage stochastic linear complementarity problem (\ref{SEQ}), the following proposition holds.
\begin{proposition}
\label{p7}
The two-stage stochastic linear complementarity problem \eqref{SEQ} has relatively complete recourse, i.e., for any $x\in \mathbb{R}_+^J$ and a.e. $\xi\in \Xi$ the second-stage problem  of \eqref{SEQ} is solvable.
\end{proposition}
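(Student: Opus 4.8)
The word ``Thus'' preceding the statement signals that the claim should be deduced from Proposition \ref{p1}, and that is the route I would take. The second-stage problem of \eqref{SEQ} is, for data $(x,\xi)\in\mathbb{R}_+^J\times\Xi$, the linear complementarity problem in $(y(\xi),\lambda(\xi))$
\[
0\le\begin{pmatrix}y(\xi)\\ \lambda(\xi)\end{pmatrix}\ \bot\ M(\xi)\begin{pmatrix}y(\xi)\\ \lambda(\xi)\end{pmatrix}+\begin{pmatrix}-p(\xi)\\ x\end{pmatrix}\ge 0,\qquad M(\xi):=\begin{pmatrix}\Pi(\xi)&I\\ -I&0\end{pmatrix}.
\]
First I would fix $x\in\mathbb{R}_+^J$ and a realization $\xi$ with $\gamma(\xi)\ge\gamma_0$ (which holds for a.e.\ $\xi$ by Assumption \ref{gamma0}), and invoke Proposition \ref{p1} to obtain the unique second-stage Cournot--Nash equilibrium $\bar y(\xi)\in\prod_{j\in\mathcal J}[0,x_j]$ of the $J$-player game \eqref{M2}.

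Next I would manufacture the multiplier. For each $j\in\mathcal J$, the component $\bar y_j(\xi)$ solves the concave quadratic program $\max\big\{\big(p_j(\xi)-\gamma(\xi)(\sum_{i\neq j}\bar y_i(\xi)+y_j)\big)y_j:\ 0\le y_j\le x_j\big\}$, whose feasible set is a polyhedron; hence the linear-constraint qualification holds automatically and the KKT conditions are necessary, yielding $\bar\lambda_j(\xi)\ge 0$ with
\[
0\le \bar y_j(\xi)\ \bot\ -p_j(\xi)+\gamma(\xi)\!\sum_{i\neq j}\bar y_i(\xi)+2\gamma(\xi)\bar y_j(\xi)+\bar\lambda_j(\xi)\ \ge 0,\qquad 0\le\bar\lambda_j(\xi)\ \bot\ x_j-\bar y_j(\xi)\ \ge 0.
\]
Stacking these over $j$ and using $\Pi(\xi)=\gamma(\xi)(ee^T+I)$, so that $(\Pi(\xi)\bar y(\xi))_j=\gamma(\xi)\big(\sum_{i\neq j}\bar y_i(\xi)+2\bar y_j(\xi)\big)$, I would identify $(\bar y(\xi),\bar\lambda(\xi))$ with a solution of exactly the two complementarity blocks displayed in \eqref{SEQ}. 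Since $x$ and $\xi$ were arbitrary, this establishes that the second-stage problem of \eqref{SEQ} is solvable for any $x\in\mathbb{R}_+^J$ and a.e.\ $\xi\in\Xi$, i.e.\ relatively complete recourse.

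As an independent cross-check I would note the alternative argument through LCP theory: on $\{\gamma(\xi)\ge\gamma_0\}$ the matrix $M(\xi)$ is positive semidefinite, since for $z=(u,v)$ one has $z^{T}M(\xi)z=u^{T}\Pi(\xi)u\ge 0$; thus the second-stage LCP is monotone, and a monotone LCP is solvable whenever feasible. Feasibility is immediate from the nonnegative vector $(y,\lambda)=(0,\max\{p(\xi),0\})$, which satisfies $y\le x$ because $x\ge0$ and makes $\Pi(\xi)\cdot 0+\max\{p(\xi),0\}-p(\xi)\ge0$.

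I do not anticipate a genuine obstacle in either route. The only points requiring care are the constraint qualification needed to produce $\bar\lambda(\xi)$ — dispatched by polyhedrality of the box $[0,x_j]$ — and the index/sign bookkeeping that matches the agent-wise stationarity and complementarity systems with the block form of \eqref{SEQ}. (If one later needed the multiplier to satisfy $\lambda(\cdot)\in\mathfrak{L}^1_J$ rather than mere pointwise existence, an additional measurable-selection and boundedness argument would be required, but that is outside the present statement.)
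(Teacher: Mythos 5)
Your proposal is correct, and in fact it contains the paper's argument as your ``cross-check'': the paper proves Proposition \ref{p7} exactly by observing that $M(\xi)$ is positive semidefinite for a.e.\ $\xi$, exhibiting the feasible point $\hat y(\xi)=0$, $\hat\lambda(\xi)=\max\{0,p(\xi)\}$, and invoking the solvability of feasible monotone LCPs from the Cottle--Pang--Stone results (\cite[Lemma 3.1.1, Theorem 3.1.2]{RPS1992the}). Your primary route is genuinely different: you start from the unique second-stage Cournot--Nash equilibrium guaranteed by Proposition \ref{p1}, use polyhedrality of the box $[0,x_j]$ to get necessary KKT conditions for each agent's concave quadratic subproblem, and stack the resulting stationarity/complementarity systems to recover precisely the second-stage blocks of \eqref{SEQ}; the bookkeeping with $(\Pi(\xi)\bar y(\xi))_j=\gamma(\xi)\big(\sum_{i\neq j}\bar y_i(\xi)+2\bar y_j(\xi)\big)$ is right. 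What each approach buys: your KKT route explicitly ties the LCP solution to the game-theoretic equilibrium (and in particular produces a solution whose $y$-part is the equilibrium of \eqref{M2}), at the price of leaning on Proposition \ref{p1} and a constraint qualification; the paper's route is shorter, needs no appeal to Proposition \ref{p1} at all, and rests purely on monotone LCP theory. Your closing caveat about measurability/integrability of $\lambda(\cdot)$ is apt but, as you say, not part of the statement being proved.
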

\begin{proof}
The coefficient matrix of the second-stage part of (\ref{SEQ})
$$
M(\xi)=\begin{pmatrix}
\Pi(\xi) & I\\
-I & 0
\end{pmatrix}
$$
is positive semidefinite for a.e. $\xi\in \Xi$.

%{\color{blue}From Assumption \ref{gamma0}, we have
%$$\begin{aligned}
%&\begin{pmatrix}
%y^T(\xi)~\lambda^T(\xi)
%\end{pmatrix}
%\begin{pmatrix}
%\Pi(\xi) & I\\
%-I & 0
%\end{pmatrix}
%\begin{pmatrix}
%y(\xi)\\
%\lambda(\xi)
%\end{pmatrix}\\
%&\geq\gamma_0(ee^T+I)\|y(\xi)\|^2.
%\end{aligned}$$}
For any given $x\in \mathbb{R}_+^J$, it follows that there always exists a pair $\big(\hat{y}(\xi),\hat{\lambda}(\xi)\big)\in \mathbb{R}^J\times\mathbb{R}^J$, such that
\begin{equation*}
  \begin{pmatrix}
    \hat{y}(\xi)\\
    \hat{\lambda}(\xi)
  \end{pmatrix}
  \geq 0,~
  \begin{pmatrix}
    \Pi(\xi) & I\\
    -I & 0
  \end{pmatrix}
  \begin{pmatrix}
    \hat{y}(\xi)\\
    \hat{\lambda}(x)
  \end{pmatrix}
  +
  \begin{pmatrix}
    -p(\xi)\\
    x
  \end{pmatrix}
  \geq 0, \quad \text{ for a.e. $\xi\in \Xi$}.
\end{equation*}
In detail, we consider a special choice $\hat{y}(\xi)=0$ and $\hat{\lambda}(\xi)=\max\{0,p(\xi)\}$, where the $\max$ function is taken componentwise. Thus, the corresponding quadratic programming problem of the linear complementarity problem is feasible.  It follows from \cite[Lemma 3.1.1, Theorem 3.1.2]{RPS1992the} that there must exist at least a solution which solves the second-stage problem for any given pair $(x,\xi)$.
\qed\end{proof}

Although the second-stage problem \eqref{M2} has a unique equilibrium for any given $(x,\xi)$ (see Proposition \ref{p1}), the system \eqref{SEQ} may admit multiple solutions. To see this, we give an illustrative example.
\begin{example}
\label{eg1}
Consider a duopoly game, with given $x=(x_1,x_2)^T\geq 0$, and $-p(\xi)\geq_{a.s.} 0$.
Then, the corresponding second-stage part of complementarity system \eqref{SEQ} reads
\begin{equation}
\label{gs1}
0\leq
\begin{pmatrix}
y_1(\xi)\\
y_2(\xi)\\
\lambda_1(\xi)\\
\lambda_2(\xi)
\end{pmatrix}
\bot
\begin{pmatrix}
2\gamma(\xi) & \gamma(\xi) & 1 & 0\\
\gamma(\xi) & 2\gamma(\xi) & 0 & 1\\
-1 & 0 & 0 & 0\\
0 & -1 & 0 & 0\\
\end{pmatrix}
\begin{pmatrix}
y_1(\xi)\\
y_2(\xi)\\
\lambda_1(\xi)\\
\lambda_2(\xi)
\end{pmatrix}
+
\begin{pmatrix}
-p_1(\xi) \\
-p_2(\xi) \\
x_1\\
x_2
\end{pmatrix}
\geq 0,  \quad \text{for a.e. $\xi\in \Xi$}.
\end{equation}
Then, the solution set of \eqref{gs1} is of the following form
\begin{align*}
\left\{(0,0,\tilde{\lambda}_1(\xi),\tilde{\lambda}_2(\xi)):
\tilde{\lambda}_1(\xi)=
\begin{cases}
0,&x_1>0\\
\lambda_1(\xi),&x_1=0
\end{cases},~
\tilde{\lambda}_2(\xi)=
\begin{cases}
0,&x_2>0\\
\lambda_2(\xi),& x_2=0 \\
\end{cases}, \quad \text{for a.e. $\xi\in \Xi$}
\right\},
\end{align*}
where $\lambda_1(\xi)\geq_{a.e.}0, \lambda_2(\xi)\geq_{a.e.}0$.
\end{example}
In Example \ref{eg1}, the ``equilibrium price'' $\lambda$ may admit multiple values when there exist some zero-valued components of $x$.

Technically, the multiple solutions of the second-stage problem will cause trouble when we handle the two-stage stochastic complementarity system \eqref{SEQ}, both in computation and analysis \cite{{SDR2009lectures}}.
The assumption ensuring the uniqueness of second-stage solution is usually made, see for instance \cite{CSS2018convergence,CSX2017discrete}.
Moreover, interpreted as ``equilibrium price'' associated with agents' production clearing, different values of $\lambda$ would have ambiguous economical interpretations.
Motivated by these, we propose a regularized method to seek for one particular choice of ``equilibrium price''.
Similar approach can be found in for example \cite{CSW2015regularized}.

For an $\epsilon>0$, let
\begin{equation*}
M^\epsilon(\xi)=
\begin{pmatrix}
\Pi(\xi) & I\\
-I & \epsilon I
\end{pmatrix}~
\mathrm{and}~
q(x,\xi)=
\begin{pmatrix}
-p(\xi) \\
x
\end{pmatrix}.
\end{equation*}
Thus, we propose
%can write the regularized second-stage SCP as
%\begin{equation}
%\label{RM}
%0\leq
%\begin{pmatrix}
%y(\xi)\\
%\lambda(\xi)
%\end{pmatrix}
%\bot
%M^\epsilon(\xi)
%\begin{pmatrix}
%y(\xi)\\
%\lambda(\xi)
%\end{pmatrix}
%+
%q(x,\xi)
%\geq 0,  \quad \text{for a.e. $\xi\in \Xi$}.
%\end{equation}
%Then, we have
the regularized SCP of \eqref{SEQ} as follows:
\begin{equation}
\label{RM}
\begin{aligned}
0&\leq x &&\bot~Cx-  \mathbb{E}[\lambda(\xi)] + a &&\geq 0,\\
0&\leq
\begin{pmatrix}
y(\xi)\\
\lambda(\xi)
\end{pmatrix}
&&\bot
~\begin{pmatrix}
\Pi(\xi) & I\\
-I & \epsilon I
\end{pmatrix}
\begin{pmatrix}
y(\xi)\\
\lambda(\xi)
\end{pmatrix}
+
\begin{pmatrix}
-p(\xi) \\
x
\end{pmatrix}
&&\geq 0, \quad \text{for a.e. $\xi\in \Xi$}.
\end{aligned}
\end{equation}

For a given pair $(x,\xi)\in\mathbb{R}_+^J\times\Xi$, the second-stage problem of \eqref{SEQ} and the regularized second-stage problem \eqref{RM} are denoted by LCP$(q(x,\xi),M(\xi))$ and LCP$(q(x,\xi),M^\epsilon(\xi))$ respectively.  Their solution functions are chosen from the respective solution sets and expressed by $z(q(x,\xi))$ and $z^\epsilon(q(x,\xi))$.
In the sequel, we omit the $\xi$ and $x$ without causing confusion, i.e., LCP$(q,M):=\text{LCP}(q(x,\xi),M(\xi))$ and LCP$(q,M^\epsilon):=\text{LCP}(q (x,\xi),M^\epsilon(\xi))$.

For clearer demonstration, recall our illustrative Example \ref{eg1}, and consider its regularization approach.
Thus, the second-stage of the regularized problem takes the following form
\begin{equation}
\label{zgs2}
0\leq
\begin{pmatrix}
y_1(\xi)\\
y_2(\xi)\\
\lambda_1(\xi)\\
\lambda_2(\xi)
\end{pmatrix}
\bot
\begin{pmatrix}
2\gamma(\xi) & \gamma(\xi) & 1 & 0\\
\gamma(\xi) & 2\gamma(\xi) & 0 & 1\\
-1 & 0 & \epsilon & 0\\
0 & -1 & 0 & \epsilon\\
\end{pmatrix}
\begin{pmatrix}
y_1(\xi)\\
y_2(\xi)\\
\lambda_1(\xi)\\
\lambda_2(\xi)
\end{pmatrix}
+
\begin{pmatrix}
-p_1(\xi) \\
-p_2(\xi) \\
x_1\\
x_2
\end{pmatrix}
\geq 0,  \quad \text{for a.e. $\xi\in \Xi$}.
\end{equation}
Under the same condition as in Example \ref{eg1}, we can obtain the unique solution of \eqref{zgs2}, which $\tilde{y}_1,\tilde{y}_2,\tilde{\lambda}_1,\tilde{\lambda}_2$  equal to 0 for a.e. $\xi\in \Xi$.
Due to the positive definiteness of $C$, it follows that we obtain the unique solution of the first-stage problem is $x_1=0,x_2=0$.
Then, we have obtained one particular solution of the original problem, the trivial solution in this example.
The key feature of our regularized method is that it promises the existence and uniqueness of solution due to the strongly monotone of regularized two-stage problem.

In the remaining of this section, we concern ourselves with the solution $z^\epsilon$ of LCP$(q,M^\epsilon)$ and explore the structure of the second-stage solution.
\begin{proposition}
\label{p3}
For any fixed $\epsilon>0$, the regularized problem \eqref{RM} has a unique solution $(x^\epsilon, y^\epsilon, \lambda^\epsilon)\in \mathbb{R}^J\times\mathcal{Y}\times\mathcal{Y}$.
\end{proposition}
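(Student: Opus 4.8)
The plan is to establish the claim in two layers: first the pointwise (scenario-wise) solvability and uniqueness of the regularized second-stage LCP, then the solvability and uniqueness of the full two-stage system via the resulting recourse map. For the second-stage part, fix $(x,\xi)\in\mathbb{R}_+^J\times\Xi$ and consider $\text{LCP}(q(x,\xi),M^\epsilon(\xi))$. I would first observe that
\[
M^\epsilon(\xi)=\begin{pmatrix}\Pi(\xi) & I\\ -I & \epsilon I\end{pmatrix}
\]
is positive definite for every $\epsilon>0$: its symmetric part is $\mathrm{diag}(\Pi(\xi),\epsilon I)$, and $\Pi(\xi)=\gamma(\xi)(ee^T+I)$ has smallest eigenvalue $\gamma(\xi)\geq\gamma_0>0$ by Assumption~\ref{gamma0}. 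Hence $M^\epsilon(\xi)\succ0$ uniformly in $\xi$, so $\text{LCP}(q(x,\xi),M^\epsilon(\xi))$ has a \emph{unique} solution $z^\epsilon(q(x,\xi))=(y^\epsilon(x,\xi),\lambda^\epsilon(x,\xi))$ by standard LCP theory (e.g.\ \cite{RPS1992the} or the Cottle--Pang--Stone theory for P-matrices). This already fixes one particular "equilibrium price," which was the motivation for the regularization.

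Next I would address measurability and essential boundedness, so that $y^\epsilon(x,\cdot)$ and $\lambda^\epsilon(x,\cdot)$ actually live in $\mathcal{Y}$. Because the solution of a positive-definite LCP is the unique minimizer of a strongly convex quadratic program, $z^\epsilon(q(x,\cdot))$ is a Lipschitz (in fact piecewise linear in $q$) function of the data; composing with the $\mathcal{F}$-measurable maps $\xi\mapsto\gamma(\xi)$, $\xi\mapsto p(\xi)$ gives $\mathcal{F}$-measurability of $\xi\mapsto z^\epsilon(q(x,\xi))$. A quantitative global error bound for LCPs with positive definite matrix, together with the uniform lower bound $\gamma(\xi)\ge\gamma_0$ (which controls the relevant condition numbers), yields $\|z^\epsilon(q(x,\xi))\|\le \kappa(1+\|x\|+\|p(\xi)\|)$ for a constant $\kappa$ depending only on $\gamma_0$; since $p\in\mathfrak{L}^\infty_J$, the recourse solution is essentially bounded, so $y^\epsilon(x,\cdot),\lambda^\epsilon(x,\cdot)\in\mathfrak{L}^\infty_J\subseteq\mathcal{Y}$.

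Finally I would close the first-stage loop. Define $\Lambda^\epsilon(x):=\mathbb{E}[\lambda^\epsilon(x,\xi)]$; by the Lipschitz-in-$x$ dependence just noted and dominated convergence (using the $\xi$-uniform bound), $\Lambda^\epsilon$ is Lipschitz on $\mathbb{R}_+^J$. The first-stage condition becomes the finite-dimensional complementarity problem $0\le x\perp Cx+a-\Lambda^\epsilon(x)\ge0$. To get existence and uniqueness of $x^\epsilon$ here, the natural route is to show the map $x\mapsto Cx+a-\Lambda^\epsilon(x)$ is strongly monotone: $C=\mathrm{diag}(c_j)$ with $c_j>0$ contributes strong monotonicity, and one must control the monotonicity defect coming from $-\Lambda^\epsilon$. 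This is where I expect the main obstacle to lie. The clean way is to argue on the whole regularized operator at once: the full system \eqref{RM} is the complementarity problem for the monotone-plus map $(x,y,\lambda)\mapsto(Cx+a-\mathbb{E}[\lambda(\xi)],\,M^\epsilon(\xi)(y(\xi),\lambda(\xi))^T+(-p(\xi),x)^T)$ on $\mathbb{R}_+^J\times\mathcal{Y}_+\times\mathcal{Y}_+$; the coupling terms $-\mathbb{E}[\lambda(\xi)]$ in the first block and $x$ in the $\lambda$-block are skew-adjoint and hence contribute nothing to monotonicity, while the diagonal blocks $C\succ0$ and $M^\epsilon(\xi)\succ0$ (uniformly, by $\gamma\ge\gamma_0$) make the whole operator \emph{strongly monotone} in the appropriate $\mathbb{R}^J\times\mathfrak{L}^2_J\times\mathfrak{L}^2_J$ inner product. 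A strongly monotone, hemicontinuous operator over a nonempty closed convex set admits exactly one solution of the associated variational inequality; combined with the relatively complete recourse from Proposition~\ref{p7} (which guarantees the feasible set is nonempty and the reduction to the first stage is legitimate), this gives a unique $(x^\epsilon,y^\epsilon,\lambda^\epsilon)\in\mathbb{R}^J\times\mathcal{Y}\times\mathcal{Y}$, completing the proof. \qed
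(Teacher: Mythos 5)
Your proposal is correct and follows essentially the same route as the paper: the paper's proof also rests on verifying that the aggregated three-block matrix $\begin{pmatrix} C & 0 & -I\\ 0 & \Pi(\xi) & I\\ I & -I & \epsilon I\end{pmatrix}$ is uniformly positive definite (the skew off-diagonal couplings drop out of the quadratic form, leaving $C\succ 0$, $\Pi(\xi)\succeq\gamma_0 I$ and $\epsilon I$), and then invokes the existence--uniqueness result of Chen, Sun and Xu \cite[Proposition 2.1(i)]{CSX2017discrete}, which is the same strongly monotone argument you carry out directly in the $\mathbb{R}^J\times\mathfrak{L}^2_J\times\mathfrak{L}^2_J$ setting. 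Your identification of the modulus via the smallest eigenvalue $\gamma(\xi)\geq\gamma_0$ of $\Pi(\xi)$ is in fact more careful than the paper's stated constant $\tau=\min\{\bar c,\gamma_0(J+1),\epsilon\}$, and your added scenario-wise uniqueness, measurability and boundedness remarks are consistent extra detail rather than a different proof.
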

\begin{proof}
The result can be obtained via a similar procedure as in \cite[Proposition 2.1 (i)]{CSX2017discrete} and we only need to show that the condition in \cite[Assumption 1]{CSX2017discrete} holds.
Recall that Assumption \ref{gamma0} holds, then for a.e. $\xi\in\Xi$
\begin{equation*}
 \left(
    \begin{array}{c}
      x \\
      u(\xi) \\
      v(\xi) \\
    \end{array}
  \right)^T
  \begin{pmatrix}
    C & 0 & -I\\
    0 & \Pi(\xi) &I\\
    I & -I & \epsilon I
  \end{pmatrix}
  \left(
    \begin{array}{c}
      x \\
      u(\xi) \\
      v(\xi) \\
    \end{array}
  \right)\geq \tau(\|x\|^2+\|u(\xi)\|^2+\|v(\xi)\|^2),
\end{equation*}
 where $\tau=\min\{\bar{c},\gamma_0(J+1),\epsilon\}$ with $\bar{c}$ denoting the minimum diagonal element of $C$.
\qed\end{proof}

\begin{theorem}\label{th:cf}
For any fixed $\epsilon>0$, $x\geq 0$ and a.e. $\xi\in \Xi$, the $j$-th component of the second-stage solution of problem \eqref{RM} $\big((y^\epsilon)_j,(\lambda^\epsilon)_j\big)$ is either $(0,0)$, or one of the following two forms:
\begin{equation}
\label{CFS}
\begin{aligned}
&\displaystyle -\left(\frac{\gamma(\xi)T^\epsilon - p_j(\xi)}{\gamma(\xi)}, \quad \quad 0\right),\\
&\displaystyle -\left(\frac{\epsilon(\gamma(\xi)T^\epsilon -p_j(\xi)) - x_j}{\epsilon\gamma(\xi) +1}, \quad \frac{\gamma(\xi)(T^\epsilon+x_j)- p_j(\xi)}{\epsilon\gamma(\xi) +1} \right)
\end{aligned}
\end{equation}
for $j\in\mathcal{J}$, where
\begin{equation}
\label{gs6}
 T^\epsilon:=\sum_{i=1}^J(y^\epsilon)_i = \frac{\gamma(\xi)\sum_{i\in\mathcal{I}_3}x_i + \epsilon\gamma(\xi)\sum_{i\in\mathcal{I}_2\cup\mathcal{I}_3}p_i(\xi)
+ \sum_{i\in\mathcal{I}_2}p_i(\xi)}{\big(\epsilon\gamma(\xi)(\abs{\mathcal{I}_2}+\abs{\mathcal{I}_3}+1) + \abs{\mathcal{I}_2} + 1 \big) \gamma(\xi)}
\end{equation}
with
\begin{align*}
\mathcal{I}_2&=\left\{j\in\mathcal{J}: \gamma(\xi)T^\epsilon + (\lambda^\epsilon)_j - p_j(\xi) < 0, \quad (y^\epsilon)_j - x_j \leq 0 \right\},\\
\mathcal{I}_3&=\left\{j\in\mathcal{J}: \gamma(\xi)T^\epsilon + (\lambda^\epsilon)_j - p_j(\xi) < 0, \quad (y^\epsilon)_j - x_j   >   0 \right\},
\end{align*}
where
$|\mathcal{I}_2|$ and $|\mathcal{I}_3|$ denote the cardinality of $\mathcal{I}_2$ and $\mathcal{I}_3$ respectively.
\end{theorem}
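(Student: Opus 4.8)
The plan is to reduce everything to a finite active-set analysis, carried out one agent at a time. For the fixed pair $(x,\xi)$ with $x\ge 0$, the matrix $M^\epsilon(\xi)$ is positive definite (it coincides with the $(u(\xi),v(\xi))$-block of the block matrix shown positive definite in the proof of Proposition~\ref{p3}), hence a $P$-matrix, so $\mathrm{LCP}(q(x,\xi),M^\epsilon(\xi))$ has a unique solution $\big(y^\epsilon,\lambda^\epsilon\big)$; in particular $T^\epsilon=\sum_{i=1}^J(y^\epsilon)_i$ and the index sets $\mathcal{I}_2,\mathcal{I}_3$ in the statement are unambiguous once that solution is fixed. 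First I would write the second-stage complementarity system of \eqref{RM} componentwise: using $\Pi(\xi)=\gamma(\xi)(ee^T+I)$, the $j$-th pair of conditions reads
\begin{equation*}
0\le (y^\epsilon)_j\ \bot\ \gamma(\xi)\big(T^\epsilon+(y^\epsilon)_j\big)+(\lambda^\epsilon)_j-p_j(\xi)\ \ge 0,\qquad
0\le (\lambda^\epsilon)_j\ \bot\ x_j-(y^\epsilon)_j+\epsilon(\lambda^\epsilon)_j\ \ge 0 .
\end{equation*}

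Second, I would run through the four sign patterns of $\big((y^\epsilon)_j,(\lambda^\epsilon)_j\big)$. The pattern $(y^\epsilon)_j=0$, $(\lambda^\epsilon)_j>0$ is impossible: the second condition then forces $x_j+\epsilon(\lambda^\epsilon)_j=0$, i.e. $(\lambda^\epsilon)_j=-x_j/\epsilon\le 0$ because $x_j\ge 0$ and $\epsilon>0$, a contradiction. The pattern $(y^\epsilon)_j=(\lambda^\epsilon)_j=0$ gives the value $(0,0)$ and is consistent exactly when $\gamma(\xi)T^\epsilon-p_j(\xi)\ge 0$, i.e. when $j$ lies outside $\mathcal{I}_2\cup\mathcal{I}_3$. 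The pattern $(y^\epsilon)_j>0$, $(\lambda^\epsilon)_j=0$ makes the first equation active, giving $(y^\epsilon)_j=-\big(\gamma(\xi)T^\epsilon-p_j(\xi)\big)/\gamma(\xi)$, the first line of \eqref{CFS}, with side condition $(y^\epsilon)_j\le x_j$ from the second line; here $\gamma(\xi)T^\epsilon+(\lambda^\epsilon)_j-p_j(\xi)=-\gamma(\xi)(y^\epsilon)_j<0$ and $(y^\epsilon)_j-x_j\le 0$, so $j\in\mathcal{I}_2$. In the pattern $(y^\epsilon)_j>0$, $(\lambda^\epsilon)_j>0$ both equations are active; substituting $(y^\epsilon)_j=x_j+\epsilon(\lambda^\epsilon)_j$ into $\gamma(\xi)\big(T^\epsilon+(y^\epsilon)_j\big)+(\lambda^\epsilon)_j-p_j(\xi)=0$ and solving for $(\lambda^\epsilon)_j$ (legitimate since $\epsilon\gamma(\xi)+1>0$) yields the second line of \eqref{CFS}, and $(\lambda^\epsilon)_j>0$ forces $(y^\epsilon)_j-x_j=\epsilon(\lambda^\epsilon)_j>0$, so $j\in\mathcal{I}_3$. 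A one-line remark covers the degenerate case $(\lambda^\epsilon)_j=0$ inside the last pattern: then $(y^\epsilon)_j=x_j$ and the two lines of \eqref{CFS} coincide, so classifying such an agent into $\mathcal{I}_2$ via $(y^\epsilon)_j-x_j\le 0$ is consistent. This exhausts the cases and shows that $\mathcal{J}$ is partitioned into the $(0,0)$-agents, $\mathcal{I}_2$ and $\mathcal{I}_3$ precisely by the inequalities in the statement.

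Third, I would obtain \eqref{gs6}. Summing $(y^\epsilon)_j$ over $j\in\mathcal{J}$ and inserting the three closed forms just derived,
\begin{equation*}
T^\epsilon=\sum_{j\in\mathcal{I}_2}\frac{p_j(\xi)-\gamma(\xi)T^\epsilon}{\gamma(\xi)}+\sum_{j\in\mathcal{I}_3}\frac{x_j-\epsilon\big(\gamma(\xi)T^\epsilon-p_j(\xi)\big)}{\epsilon\gamma(\xi)+1},
\end{equation*}
which is a single linear equation in $T^\epsilon$. Multiplying through by $\gamma(\xi)\big(\epsilon\gamma(\xi)+1\big)$, collecting the $T^\epsilon$ terms, and dividing by the resulting coefficient $\gamma(\xi)\big(\epsilon\gamma(\xi)(\abs{\mathcal{I}_2}+\abs{\mathcal{I}_3}+1)+\abs{\mathcal{I}_2}+1\big)$, which is strictly positive by Assumption~\ref{gamma0}, one reads off exactly \eqref{gs6}.

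I expect the main obstacle to be bookkeeping rather than mathematical depth: one must track carefully which piece of each complementarity pair is active in each of the four cases and, in particular, (i) note that the circular-looking appearance of $T^\epsilon$ and $(\lambda^\epsilon)_j$ in the definitions of $\mathcal{I}_2,\mathcal{I}_3$ is harmless because the solution has already been pinned down uniquely, and (ii) handle the non-generic tie $(y^\epsilon)_j=x_j$ so that the partition and the two formulas in \eqref{CFS} remain mutually consistent. The remaining algebra — the three closed forms and the resolution of the scalar equation for $T^\epsilon$ — is elementary once the active-set structure is in place.
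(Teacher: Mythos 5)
Your proposal is correct and follows essentially the same route as the paper's proof: write the second-stage system of \eqref{RM} componentwise, classify each agent by which parts of the two complementarity pairs are active (yielding the $(0,0)$ case, the $\mathcal{I}_2$ form, and the $\mathcal{I}_3$ form of \eqref{CFS}), and then sum over $j$ to get a single linear equation whose solution is \eqref{gs6}. The only cosmetic difference is that you case on the sign pattern of $\big((y^\epsilon)_j,(\lambda^\epsilon)_j\big)$ and solve one aggregated linear equation directly, whereas the paper cases on the signs of the residuals and combines two summed identities before substituting; the content is the same.
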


The detailed proof of the Theorem \ref{th:cf} is given in Appendix.
Note that the above theorem gives the forms of the unique solution of the second-stage regularized problem \eqref{RM}.
However, it may be used to assist numerical calculation since the partition of the index set is not known in advance.
Nevertheless, it is suffice for our purposes of deriving additional properties of the solutions.
Due to the positive definiteness of $M^\epsilon$ and special structure of problem (\ref{RM}), we first obtain the following Lipschitz continuous property, following \cite[Corollary 2.1]{CX2013newton}.
\begin{lemma}
\label{lem3}
For any $\xi\in\Xi$, there exists $L(\xi)>0$ such that for any fixed $\epsilon \in (0,1]$, we have
\[\|z^\epsilon(q(x_1,\xi))-z^\epsilon(q(x_2,\xi))\|\leq L(\xi)\|x_1-x_2\|,\,\,
\text{~for ~}x_1,x_2\in\mathbb{R}^J_+.\]
\end{lemma}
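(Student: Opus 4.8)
The plan is to derive the Lipschitz bound directly from the positive definiteness of $M^\epsilon(\xi)$, using the standard stability theory for monotone linear complementarity problems with a varying constant term. The key observation is that the second-stage problem LCP$(q(x,\xi),M^\epsilon(\xi))$ depends on the first-stage variable $x$ only through the affine term $q(x,\xi) = (-p(\xi),x)^T$, and this dependence is $1$-Lipschitz: $\|q(x_1,\xi)-q(x_2,\xi)\| = \|x_1-x_2\|$. So it suffices to show that the solution map $q \mapsto z^\epsilon(q)$ of LCP$(q,M^\epsilon(\xi))$ is Lipschitz in $q$ with a modulus depending only on $\xi$ (and bounded uniformly for $\epsilon\in(0,1]$), and then compose.

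First I would fix $\xi$ and write $z_i = z^\epsilon(q(x_i,\xi))$ for $i=1,2$, so that $0 \le z_i \perp M^\epsilon(\xi) z_i + q(x_i,\xi) \ge 0$. Using the variational-inequality characterization of each LCP solution and testing the inequality for $z_1$ against $z_2$ and vice versa (the standard monotone-operator trick), one obtains
\begin{equation*}
(z_1 - z_2)^T M^\epsilon(\xi)(z_1 - z_2) \le (z_1 - z_2)^T\big(q(x_2,\xi) - q(x_1,\xi)\big).
\end{equation*}
By the positive definiteness established in the proof of Proposition \ref{p3} — where the symmetric part of the relevant matrix is bounded below by $\tau = \min\{\bar c,\gamma_0(J+1),\epsilon\}$ — the left-hand side dominates $\mu(\xi,\epsilon)\|z_1-z_2\|^2$ for a suitable modulus $\mu(\xi,\epsilon)>0$; here I must be a little careful, since $M^\epsilon(\xi)$ itself (as opposed to the larger block matrix appearing in Proposition \ref{p3}) is only positive semidefinite in the trivial sense, but its symmetric part has the block form with $\Pi(\xi)\succeq\gamma_0(J+1)I$ and $\epsilon I$ on the diagonal and zero off-diagonal symmetric part, so in fact $(z_1-z_2)^TM^\epsilon(\xi)(z_1-z_2) \ge \min\{\gamma_0(J+1),\epsilon\}\|z_1-z_2\|^2$. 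Combining with Cauchy–Schwarz on the right-hand side and cancelling one factor of $\|z_1-z_2\|$ yields
\begin{equation*}
\|z_1 - z_2\| \le \frac{1}{\min\{\gamma_0(J+1),\epsilon\}}\,\|q(x_1,\xi)-q(x_2,\xi)\| = \frac{1}{\min\{\gamma_0(J+1),\epsilon\}}\,\|x_1-x_2\|.
\end{equation*}

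This gives a bound, but with a modulus that blows up as $\epsilon\to 0$, which is \emph{not} what the lemma claims — the lemma asserts a single $L(\xi)$ valid uniformly for all $\epsilon\in(0,1]$. So the main obstacle, and the reason the statement cites \cite[Corollary 2.1]{CX2013newton} rather than relying on the crude monotonicity estimate, is to get an $\epsilon$-independent modulus. To do this I would instead invoke the explicit closed-form structure of the second-stage solution from Theorem \ref{th:cf}: on each fixed index partition $(\mathcal{I}_1,\mathcal{I}_2,\mathcal{I}_3)$ the solution $(y^\epsilon,\lambda^\epsilon)$ is an explicit affine function of $x$ (through the formulas \eqref{CFS}–\eqref{gs6}), whose coefficients are rational in $\epsilon,\gamma(\xi),p(\xi)$ with denominators of the form $\epsilon\gamma(\xi)+1$ and $\big(\epsilon\gamma(\xi)(|\mathcal{I}_2|+|\mathcal{I}_3|+1)+|\mathcal{I}_2|+1\big)\gamma(\xi)$; since these denominators are bounded away from zero uniformly in $\epsilon\in(0,1]$ (using Assumption \ref{gamma0}, $\gamma(\xi)\ge\gamma_0$), each affine piece has a Lipschitz constant in $x$ bounded by some $L(\xi)$ independent of $\epsilon$. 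Because the solution map is continuous (indeed single-valued, by Proposition \ref{p3}) and is piecewise affine with finitely many pieces sharing this common bound, it is globally $L(\xi)$-Lipschitz on $\mathbb{R}^J_+$. The routine verification that the piecewise-affine selection is genuinely continuous across the region boundaries — and the bookkeeping of differentiating \eqref{gs6} with respect to $x_j$ — is exactly the content referenced to \cite{CX2013newton} and I would carry it out by that route rather than reproduce it, so I would simply write: the conclusion follows from Theorem \ref{th:cf} together with \cite[Corollary 2.1]{CX2013newton}, noting that all denominators in \eqref{CFS}–\eqref{gs6} are bounded below by positive constants depending only on $\gamma_0$ and $J$ when $\epsilon\in(0,1]$. $\qed$
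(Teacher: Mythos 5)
Your proposal is correct in substance but takes a more self-contained route than the paper, whose entire ``proof'' of Lemma \ref{lem3} is the one-line appeal to the positive definiteness of $M^\epsilon$, the special structure of \eqref{RM}, and \cite[Corollary 2.1]{CX2013newton}. You correctly identify the real issue -- the naive monotonicity estimate only yields a modulus of order $1/\min\{\lambda_{\min}(\Pi(\xi)),\epsilon\}$, which is useless as $\epsilon\downarrow 0$ -- and then obtain the uniform-in-$\epsilon$ bound from the closed-form expressions of Theorem \ref{th:cf}: on each fixed index partition $(\mathcal{I}_1,\mathcal{I}_2,\mathcal{I}_3)$ the pair $(y^\epsilon,\lambda^\epsilon)$ is affine in $x$ through \eqref{CFS}--\eqref{gs6}, with denominators $\epsilon\gamma(\xi)+1\geq 1$ and $\bigl(\epsilon\gamma(\xi)(\abs{\mathcal{I}_2}+\abs{\mathcal{I}_3}+1)+\abs{\mathcal{I}_2}+1\bigr)\gamma(\xi)\geq\gamma_0$, so each piece is $L(\xi)$-Lipschitz with $L(\xi)$ independent of $\epsilon\in(0,1]$, and the global bound follows since the (single-valued, continuous) solution map of a P-matrix LCP is piecewise affine in $q$, hence in $x$, and one can sum the estimate along the segment joining $x_1$ and $x_2$. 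This buys a proof that does not lean on the external corollary at all, whereas the paper's route buys brevity by importing the Lipschitz stability of P-matrix LCP solutions and checking only that the relevant modulus is uniform in $\epsilon$ thanks to the block structure; note, though, that your closing sentence still defers the continuity/bookkeeping to the very same citation, which partly undercuts the independence of your route -- you should either write out the (easy) segment argument or cite the piecewise-linearity of P-matrix LCP solution maps (e.g. \cite{RPS1992the}) instead. One small slip, inherited from the paper's own Proposition \ref{p3}: the eigenvalues of $ee^T+I$ are $J+1$ and $1$, so $\Pi(\xi)\succeq\gamma(\xi)I\succeq\gamma_0 I$, not $\gamma_0(J+1)I$; this is harmless since you discard that crude estimate anyway.
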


\begin{lemma}
\label{lem2}
For any fixed $\epsilon>0$ and $(x,\xi)\in\mathbb{R}_+^J\times\Xi$, $T^\epsilon$ has the following upper bound:
\begin{align*}
T^\epsilon&\leq \norm{x}_1 + \left(\epsilon + \frac{1}{\gamma(\xi)}\right)\norm{p(\xi)}_1.
\end{align*}
\end{lemma}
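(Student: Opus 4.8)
The plan is to read off the value of $T^\epsilon$ from the closed form \eqref{gs6} provided by Theorem \ref{th:cf} and to estimate its numerator and denominator separately. Abbreviate $\gamma:=\gamma(\xi)$, $p_i:=p_i(\xi)$, $a:=\abs{\mathcal{I}_2}$ and $b:=\abs{\mathcal{I}_3}$, so that
\[
T^\epsilon=\frac{\gamma\sum_{i\in\mathcal{I}_3}x_i+\epsilon\gamma\sum_{i\in\mathcal{I}_2\cup\mathcal{I}_3}p_i+\sum_{i\in\mathcal{I}_2}p_i}{\big(\epsilon\gamma(a+b+1)+a+1\big)\gamma}.
\]
Since $x\in\mathbb{R}_+^J$, every partial sum over a subset of $\mathcal{J}$ of the nonnegative numbers $x_i$ is bounded by $\norm{x}_1$. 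Since the risk-adjusted prices $p_i$ need not have a fixed sign (cf. Example \ref{eg1}), I would pass to absolute values for the price terms and use $\sum_{i\in\mathcal{I}_2\cup\mathcal{I}_3}p_i\le\norm{p(\xi)}_1$ and $\sum_{i\in\mathcal{I}_2}p_i\le\norm{p(\xi)}_1$. Combined with $\gamma>0$ (Assumption \ref{gamma0}), this yields the numerator bound $\gamma\norm{x}_1+(\epsilon\gamma+1)\norm{p(\xi)}_1$.

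For the denominator, the bracketed factor satisfies $\epsilon\gamma(a+b+1)+a+1\ge 1$ because $a,b\ge0$, $\epsilon>0$ and $\gamma>0$; hence the denominator is at least $\gamma$. Dividing the numerator bound by $\gamma$ — which, after factoring $\gamma$ out, is exactly the right-hand side of the asserted inequality — gives
\[
T^\epsilon\le\frac{\gamma\norm{x}_1+(\epsilon\gamma+1)\norm{p(\xi)}_1}{\gamma}=\norm{x}_1+\Big(\epsilon+\frac{1}{\gamma(\xi)}\Big)\norm{p(\xi)}_1,
\]
which is the claim.

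The argument is entirely routine once Theorem \ref{th:cf} is in hand; the only point that requires a little attention is that $p_j(\xi)$ may be negative, so the price contributions must be estimated by $\norm{p(\xi)}_1$ rather than by $\sum_i p_i(\xi)$. There is no genuine obstacle, and the estimate does not even use which of the three forms in \eqref{CFS} a given component takes — only the aggregate formula \eqref{gs6} and the cardinalities $\abs{\mathcal{I}_2},\abs{\mathcal{I}_3}$ enter.
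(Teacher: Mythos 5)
Your proof is correct and follows essentially the same route as the paper: bound the numerator of \eqref{gs6} by replacing the partial sums with $\gamma(\xi)\norm{x}_1+(\epsilon\gamma(\xi)+1)\norm{p(\xi)}_1$ (using $x\geq 0$ and absolute values on the prices), bound the denominator below by $\gamma(\xi)$ since the bracketed factor is at least $1$, and divide. No differences worth noting.
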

\begin{proof}
We have the following derivation from \eqref{gs6} that
\begin{align*}
T^\epsilon= &\frac{\gamma(\xi)\sum_{i\in\mathcal{I}_3}x_i +  \epsilon\gamma(\xi)\sum_{i\in\mathcal{I}_2\cup\mathcal{I}_3}{p_i(\xi)}
+ \sum_{i\in\mathcal{I}_2}p_i(\xi)}{\left(\epsilon\gamma(\xi)(\abs{\mathcal{I}_2}+\abs{\mathcal{I}_3}+1) + \abs{\mathcal{I}_2} + 1 \right) \gamma(\xi)}\\
\leq & \frac{\gamma(\xi)\sum_{i=1}^Jx_i + (\epsilon\gamma(\xi) +1)\sum_{i=1}^J\abs{p_i(\xi)}
}{\left(\epsilon\gamma(\xi)(\abs{\mathcal{I}_2}+\abs{\mathcal{I}_3}+1) + \abs{\mathcal{I}_2} + 1 \right) \gamma(\xi)}\\
\leq & \frac{\gamma(\xi)\sum_{i=1}^Jx_i + (\epsilon\gamma(\xi)+1)\sum_{i=1}^J\abs{p_i(\xi)}
}{\gamma(\xi)}\\
=& \norm{x}_1 + \left(\epsilon + \frac{1}{\gamma(\xi)}\right)\norm{p_1(\xi)}_1.
\end{align*}
\qed\end{proof}

We end this section by establishing the convergence result of the second-stage LCP$(q,M^\epsilon)$ solutions as $\epsilon\downarrow0$ for any given pair $(x,\xi)\in\mathbb{R}^J\times\Xi$.
\begin{proposition}
\label{p2}
For any fixed $\epsilon>0$ and $(x,\xi)\in\mathbb{R}_+^J\times\Xi$, let $z^\epsilon(\xi)=(y^\epsilon(\xi),\lambda^\epsilon(\xi))$ denote the unique solution of the regularized problem LCP$(q,M^\epsilon)$. Then
$$\lim_{\epsilon\downarrow 0}\norm{z^\epsilon(\xi)-\bar{z}(\xi)}=0,$$
where $\bar{z}(\xi)=(\bar{y}(\xi),\bar{\lambda}(\xi))$ denotes the unique least $l_2$-norm solution of the LCP$(q,M)$.
Moreover, the $j$-th component of the least $l_2$-norm solution of problem (\ref{SEQ}) has one of the following three forms:
\begin{equation}
\label{LNS}
\left\{(0,0), \left(-\frac{\gamma(\xi)\bar{T} - p_j(\xi)}{\gamma(\xi)},0\right), \Big( x_j, -\gamma(\xi)(\bar{T}+x_j) + p_j(\xi) \Big)\right\}
\end{equation}
for $j\in\mathcal{J}$, where
\begin{align*}
\bar{T} := \lim_{\epsilon\downarrow0}T^\epsilon= \sum_{i=1}^{J}\bar{y}_i.
\end{align*}
Furthermore, for a.e. $\xi\in\Xi$ there exists $\bar{\kappa}(\xi)>0$, such that
\begin{equation}
\label{gs7}
\Vert \lambda^\epsilon(\xi) - \bar{\lambda}(\xi)\Vert \leq \bar{\kappa}(\xi)\epsilon.
\end{equation}
\end{proposition}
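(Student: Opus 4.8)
The plan is to prove the three assertions of Proposition~\ref{p2} in turn: first the convergence $z^\epsilon(\xi)\to\bar z(\xi)$, then the closed forms \eqref{LNS}, and finally the linear rate \eqref{gs7}. Throughout, $(x,\xi)\in\mathbb{R}_+^J\times\Xi$ is fixed and $\gamma(\xi)\ge\gamma_0$ (Assumption~\ref{gamma0}, which holds for a.e.\ $\xi$), so that $\Pi(\xi)-\gamma_0I$ is positive semidefinite.

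\emph{Convergence.} Let $z=(y_z,\lambda_z)$ be any solution of LCP$(q,M)$ and $z^\epsilon=(y^\epsilon,\lambda^\epsilon)$ the unique solution of LCP$(q,M^\epsilon)$. Subtracting the two complementarity identities and using the feasibility inequalities $M^\epsilon z^\epsilon+q\ge0$, $Mz+q\ge0$ gives, in the usual way, $(z^\epsilon-z)^\top(M^\epsilon z^\epsilon-Mz)\le0$. Since $M^\epsilon-M$ only adds $\epsilon I$ in the $\lambda$-block, this rearranges to $(y^\epsilon-y_z)^\top\Pi(\xi)(y^\epsilon-y_z)+\epsilon(\lambda^\epsilon-\lambda_z)^\top\lambda^\epsilon\le0$. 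As the first term is nonnegative, this yields $\|\lambda^\epsilon\|\le\|\lambda_z\|$ and, together with $\Pi(\xi)\succeq\gamma_0I$, $\|y^\epsilon-y_z\|^2\le(\epsilon/\gamma_0)\|\lambda^\epsilon-\lambda_z\|\,\|\lambda^\epsilon\|$. Taking $z=\bar z$ and $\epsilon\in(0,1]$ shows $\{z^\epsilon\}$ is bounded. Along any sequence $\epsilon_k\downarrow0$ extract a limit $z^*$; passing to the limit in the LCP$(q,M^{\epsilon_k})$ conditions (note $M^{\epsilon_k}\to M$) shows $z^*$ solves LCP$(q,M)$, so $y^*=\bar y$ by Proposition~\ref{p1}, while $\|\lambda^*\|\le\|\bar\lambda\|$ and $\lambda^*$ is a feasible multiplier for $\bar y$. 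Solving the componentwise complementarity with $\bar y$ fixed shows the set of such multipliers is a Cartesian product of points and half-lines $[a_j,\infty)$ with $a_j\ge0$, so its minimum-Euclidean-norm element is unique and equals $\bar\lambda$; hence $z^*=\bar z$. Since every subsequential limit is $\bar z$ and the family is bounded, $z^\epsilon(\xi)\to\bar z(\xi)$, and in particular $T^\epsilon\to\bar T:=\sum_i\bar y_i$.

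\emph{Closed forms.} By Theorem~\ref{th:cf} each component $\big((y^\epsilon)_j,(\lambda^\epsilon)_j\big)$ is one of the three forms in \eqref{CFS}, whose coefficients are continuous at $\epsilon=0$ once $T^\epsilon\to\bar T$ is known; letting $\epsilon\downarrow0$ in each gives, respectively, $(0,0)$, $-\big((\gamma(\xi)\bar T-p_j(\xi))/\gamma(\xi),0\big)$, and $\big(x_j,\ p_j(\xi)-\gamma(\xi)(\bar T+x_j)\big)$. Since along the convergent family $\big((y^\epsilon)_j,(\lambda^\epsilon)_j\big)\to(\bar y_j,\bar\lambda_j)$, the limit must be one of these three, which is exactly \eqref{LNS}. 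Reading off the componentwise complementarity additionally gives the identity $\bar\lambda_j=\max\{0,\ p_j(\xi)-\gamma(\xi)(\bar T+x_j)\}$, which I will use below.

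\emph{Linear rate — the main obstacle.} The monotonicity estimate only yields $\|y^\epsilon-\bar y\|=O(\sqrt\epsilon)$, hence $|T^\epsilon-\bar T|=O(\sqrt\epsilon)$, which is too weak; the improvement to $O(\epsilon)$ must come from the explicit formula \eqref{gs6}. The key observation is that as $\epsilon$ ranges over $(0,1]$ the index triple $(\mathcal I_1,\mathcal I_2,\mathcal I_3)$ takes only finitely many values, the set of $\epsilon$ corresponding to each value being a finite union of intervals (cut out by rational inequalities in $\epsilon$), and on each such value $T^\epsilon$ equals a rational function $\dfrac{A+B\epsilon}{\gamma(\xi)(C+D\epsilon)}$ with $C\ge1$; moreover $\epsilon\mapsto T^\epsilon$ is continuous (the LCP$(q,M^\epsilon)$ solution depends continuously on $\epsilon>0$ since $M^\epsilon$ is positive definite). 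Hence there is $\epsilon_0(\xi)>0$ such that $(0,\epsilon_0(\xi)]$ lies in a single interval with a single such rational expression for $T^\epsilon$, and letting $\epsilon\downarrow0$ forces $A/(\gamma(\xi)C)=\bar T$; a one-line computation then gives $|T^\epsilon-\bar T|=\dfrac{|BC-AD|}{\gamma(\xi)C(C+D\epsilon)}\epsilon\le c_1(\xi)\epsilon$ on $(0,\epsilon_0(\xi)]$. For the multipliers I argue componentwise on $(0,\epsilon_1(\xi)]$ with $\epsilon_1(\xi)\le\epsilon_0(\xi)$ small enough: if $(\lambda^\epsilon)_j$ is one of the first two forms it equals $0$, and $\bar\lambda_j>0$ is then impossible for small $\epsilon$ (by \eqref{LNS} it would make $\bar y_j=x_j$ with strict sign conditions forcing $j\in\mathcal I_3$ for small $\epsilon$), so $\bar\lambda_j=0$ and the difference vanishes; if $(\lambda^\epsilon)_j=\big(p_j(\xi)-\gamma(\xi)(T^\epsilon+x_j)\big)/(\epsilon\gamma(\xi)+1)$, subtracting $\bar\lambda_j=\max\{0,p_j(\xi)-\gamma(\xi)(\bar T+x_j)\}$ and using $|T^\epsilon-\bar T|\le c_1(\xi)\epsilon$ bounds it by $\gamma(\xi)(c_1(\xi)+\bar\lambda_j)\epsilon$. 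Taking the Euclidean norm over $j$ gives $\|\lambda^\epsilon(\xi)-\bar\lambda(\xi)\|\le\bar\kappa_0(\xi)\epsilon$ on $(0,\epsilon_1(\xi)]$; on $(\epsilon_1(\xi),1]$ the left side is bounded (indeed $\|\lambda^\epsilon\|\le\|\bar\lambda\|$), so the inequality persists after enlarging the constant, and we set $\bar\kappa(\xi)$ accordingly. The point requiring care in the write-up is the behaviour across the $\epsilon$-values at which the index triple changes: one must check that the interval adjacent to $0$ indeed has limit $\bar T$ (forced by continuity of $\epsilon\mapsto T^\epsilon$ together with the already-proven convergence), so that no factor of $\epsilon$ is lost.
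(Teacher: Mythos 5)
Your proposal is correct, and its first two parts run essentially parallel to the paper's proof: the same monotonicity identity $(z^\epsilon-\hat z)^\top(M^\epsilon z^\epsilon-M\hat z)\le 0$ giving $\|\lambda^\epsilon\|\le\|\hat\lambda\|$, accumulation points solving LCP$(q,M)$, uniqueness of the $y$-part from Proposition \ref{p1}, and identification of the limit with the least-norm multiplier (the paper cites \cite[Theorem 5.6.2]{RPS1992the} where you verify the point/half-line structure of the multiplier set directly, which is fine), followed by passing to the limit in \eqref{CFS} to get \eqref{LNS}. Where you genuinely diverge is the linear rate \eqref{gs7}. The paper stays with the explicit componentwise formulas: it writes $(y^{\epsilon})_j-\bar y_j$ in each of the three cases, shows $T^{\epsilon}-\bar T\ge 0$ by a short contradiction argument, and then sums these identities to obtain the explicit bound $0\le T^{\epsilon}-\bar T\le\big(\|p(\xi)\|_1+\gamma(\xi)\|x\|_1\big)\epsilon$, which plugged into the case-3 expression yields $\bar\kappa(\xi)=3\sqrt{J}\big(\gamma(\xi)^2\|x\|_1+\gamma(\xi)\|p(\xi)\|_1\big)$ with no need for the active index sets to settle down. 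You instead argue that the index triple $(\mathcal I_1,\mathcal I_2,\mathcal I_3)$ freezes on some interval $(0,\epsilon_0(\xi)]$ — finitely many patterns, each active on a set cut out by rational inequalities in $\epsilon$ via \eqref{gs6} and uniqueness of the solution — so that $T^\epsilon$ is a single rational function of $\epsilon$ near zero and the $O(\epsilon)$ rate drops out, after which you patch $(\epsilon_1(\xi),1]$ using boundedness. This is a valid alternative (the pattern-freezing step you flag is the one needing care, but it can be made rigorous exactly as you indicate), and it makes transparent why the rate is linear rather than $O(\sqrt\epsilon)$; what it gives up is the explicit, $\epsilon$-uniform constant. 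That loss matters downstream: Lemma \ref{lem1} integrates $\bar\kappa(\xi)$ (it uses $\mathbb{E}[\bar\kappa(\xi)]\epsilon_k$), and the paper's closed-form $\bar\kappa(\xi)$ is manifestly measurable and integrable under the standing assumptions on $\gamma$ and $p$, whereas your $\bar\kappa(\xi)$, assembled from $\epsilon_0(\xi)$, $\epsilon_1(\xi)$ and $c_1(\xi)$, is not obviously measurable in $\xi$, so it proves the proposition as stated but would not directly support the later expectation estimate without extra work.
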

The proof of Proposition \ref{p2} is given in Appendix.

\section{Convergence analysis}
\label{sec:3}
In this section, we first prove the convergence of the unique solution of the regularized problem \eqref{RM}
to the solution set of the original problem as the regularized parameter $\epsilon$ decreases to zero.
Next, we will study the sample average approximation (SAA) to solve the regularized problem. See \cite{CSW2015regularized}.
Combined with our regularization approaches, we demonstrate the convergence property of the solution of our regularized SAA model as the number of samples goes to infinity.
More specifically, the convergence analysis in this section is divided into two parts: the convergence analysis of the regularized problem as the regularized parameter $\epsilon$ tends to zero, and the analysis of regularized SAA.
We finally build up the convergence relationship between the regularized SAA approach and the original problem.

\subsection{Convergence of the regularized model}
In this subsection, we only need to consider the convergence properties of the first-stage decision vector, i.e., $x^\epsilon\in\mathbb{R}^J_+$ that solves problem (\ref{RM}),
when the regularized parameter $\epsilon$ tends to zero.
The convergence property of the solution $(x^\epsilon,y^\epsilon,\lambda^\epsilon)$ then follows combining the result of section 3.
From Proposition \ref{p3}, we know that for fixed $\epsilon>0$ problem \eqref{RM} admits a unique first-stage solution $x^\epsilon$. In the following, we concern about the sequence of accumulation points of $\{x^\epsilon\}$ as $\epsilon\downarrow0$.

For the existence of accumulation points, we have the following result.
\begin{proposition}
\label{p4}
Suppose there exists $p_0>0$ such that for all $j\in\mathcal{J}$, $p_j(\xi)\leq_{a.s.} p_0$. Then, with $\epsilon\downarrow0$, $\{x^\epsilon\}$ is bounded.
\end{proposition}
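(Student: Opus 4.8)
The plan is to turn the first-stage complementarity relation in \eqref{RM} into a coercive bound on $x^\epsilon$, the constant in that bound coming from an a.s. estimate on the regularized multiplier that is uniform in $\epsilon$. First I would fix $\epsilon>0$ and let $(x^\epsilon,y^\epsilon,\lambda^\epsilon)$ be the unique solution of \eqref{RM} supplied by Proposition \ref{p3}. Reading off the first block of \eqref{RM}, $x^\epsilon\geq 0$, $Cx^\epsilon-\mathbb{E}[\lambda^\epsilon(\xi)]+a\geq 0$ and $\langle x^\epsilon,\,Cx^\epsilon-\mathbb{E}[\lambda^\epsilon(\xi)]+a\rangle=0$. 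Since $x^\epsilon\geq 0$ and $a>0$, this yields $\langle x^\epsilon,Cx^\epsilon\rangle\leq\langle x^\epsilon,\mathbb{E}[\lambda^\epsilon(\xi)]\rangle$, and as $C=\mathrm{diag}(c_1,\dots,c_J)$ is positive definite, $\bar c\,\|x^\epsilon\|^2\leq\langle x^\epsilon,\mathbb{E}[\lambda^\epsilon(\xi)]\rangle$ with $\bar c=\min_j c_j>0$.

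The second step is to bound $\mathbb{E}[\lambda^\epsilon(\xi)]$ componentwise by $p_0$, independently of $\epsilon$. For this I would invoke Theorem \ref{th:cf}: for a.e. $\xi$ each $(\lambda^\epsilon)_j(\xi)$ equals either $0$ or $(\gamma(\xi)(T^\epsilon+x^\epsilon_j)-p_j(\xi))/(\epsilon\gamma(\xi)+1)$, and in the second case the nonnegativity $(\lambda^\epsilon)_j(\xi)\geq 0$ together with $\gamma(\xi)>0$ (Assumption \ref{gamma0}), $T^\epsilon=\sum_i (y^\epsilon)_i\geq 0$, $x^\epsilon_j\geq 0$ and $\epsilon\gamma(\xi)+1\geq 1$ forces $0\leq(\lambda^\epsilon)_j(\xi)\leq p_j(\xi)\leq_{a.s.} p_0$. (Equivalently, the same bound $(\lambda^\epsilon)_j(\xi)\leq\max\{0,p_j(\xi)\}$ falls straight out of the two blocks of the second-stage LCP in \eqref{RM}: if $y^\epsilon_j(\xi)>0$ the first block gives $(\lambda^\epsilon)_j=p_j(\xi)-\gamma(\xi)(T^\epsilon+y^\epsilon_j)\leq p_j(\xi)$, while if $y^\epsilon_j(\xi)=0$ the complementarity in the second block together with $\epsilon>0$ forces $(\lambda^\epsilon)_j(\xi)=0$.) Taking expectations gives $\mathbb{E}[\lambda^\epsilon(\xi)]\leq p_0 e$.

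Finally I would combine the two estimates: $\bar c\,\|x^\epsilon\|^2\leq\langle x^\epsilon,\mathbb{E}[\lambda^\epsilon(\xi)]\rangle\leq p_0\langle x^\epsilon,e\rangle=p_0\|x^\epsilon\|_1\leq p_0\sqrt{J}\,\|x^\epsilon\|$, so that $\|x^\epsilon\|\leq p_0\sqrt{J}/\bar c$, a bound free of $\epsilon$. Hence $\{x^\epsilon\}$ is bounded as $\epsilon\downarrow 0$.

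I expect the only delicate point to be the uniform-in-$\epsilon$ control of the multiplier in the second step. The factor $(\epsilon\gamma(\xi)+1)^{-1}$ appearing in \eqref{CFS} is itself harmless as $\epsilon\downarrow 0$, but one cannot simply discard the $-\gamma(\xi)(T^\epsilon+x^\epsilon_j)$ term in the numerator without using the sign/complementarity information on $\lambda^\epsilon$, and it is precisely the hypothesis $p_j(\xi)\leq_{a.s.}p_0$ that then closes the bound. Integrability of $\lambda^\epsilon$ is not an issue, since it is nonnegative and essentially bounded by $p_0$, so $\mathbb{E}[\lambda^\epsilon(\xi)]$ is well defined and finite.
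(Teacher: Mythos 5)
Your argument is correct, but it takes a different route from the paper. The paper proves Proposition \ref{p4} by contradiction: it picks a threshold $\alpha$ with $\gamma_0\alpha-p_j(\xi)>0$ a.e., observes via the closed form \eqref{CFS} that any component with $(x^\epsilon)_j\geq\alpha$ forces $(\lambda^\epsilon)_j(\xi)=0$ a.e. (the third branch would make $(\lambda^\epsilon)_j$ negative), and then notes that the $j$-th first-stage complementarity relation $0\leq (x^\epsilon)_j\bot c_j(x^\epsilon)_j+a_j\geq 0$ is violated since both factors are positive. You instead prove the uniform a.s. bound $0\leq(\lambda^\epsilon)_j(\xi)\leq\max\{0,p_j(\xi)\}\leq p_0$ for every $\epsilon>0$ and then exploit the coercivity of the first-stage LCP ($C$ positive definite, $a>0$) to get the explicit $\epsilon$-free estimate $\|x^\epsilon\|\leq \sqrt{J}\,p_0/\bar c$; this is direct rather than by contradiction, yields a quantitative bound the paper's proof does not, and only needs the elementary complementarity structure of the second stage rather than the full formula for $T^\epsilon$. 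One small slip: when you quote Theorem \ref{th:cf} you write the nonzero multiplier branch as $\big(\gamma(\xi)(T^\epsilon+x^\epsilon_j)-p_j(\xi)\big)/(\epsilon\gamma(\xi)+1)$, whereas \eqref{CFS} carries an overall minus sign, so the correct value is $\big(p_j(\xi)-\gamma(\xi)(T^\epsilon+x^\epsilon_j)\big)/(\epsilon\gamma(\xi)+1)$; your subsequent reasoning (drop the nonpositive $-\gamma(\xi)(T^\epsilon+x^\epsilon_j)$ term, divide by $\epsilon\gamma(\xi)+1\geq 1$) and your parenthetical derivation straight from the two blocks of \eqref{RM} are consistent with the correct sign, so this is a transcription error rather than a gap.
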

\begin{proof}
From the condition on $p_j(\xi)$, there must a sufficiently large $\alpha>0$ such that for any $j\in\mathcal{J}$
$$\gamma_0\alpha-p_j(\xi)>0, \quad \text{for a.e. $\xi\in \Xi$}.$$
Then we have
$$-\frac{\gamma(\xi)(T^\epsilon+\alpha) - {p_j(\xi)}}{\epsilon\gamma(\xi)+1} < 0,\quad \text{for a.e. $\xi\in \Xi$}.$$
%Note that if there exists $(x_\epsilon)_j\geq L$, we have from \eqref{CFS}
%$$\big({\lambda}_{\epsilon}(\xi)\big)_j=0, \quad \text{for a.e. $\xi\in \Xi$}.$$
Assume that $\{x^\epsilon\}$ is unbounded for the purpose of arriving at a contradiction.
Then, it follows that there exist some indices $j\in\mathcal{J}$, such that $(x^\epsilon)_j\geq\alpha$. Then we consider the $j$-th component of the first-stage complementarity relation,
$$0 \leq (x^\epsilon)_j \bot c_j(x^\epsilon)_j - \mathbb{E}\big[\big(\lambda^\epsilon(\xi)\big)_j\big] + a_j \geq 0,$$
which can be expressed, from \eqref{CFS}, as
$$0 \leq (x^\epsilon)_j \bot c_j(x^\epsilon)_j +  a_j \geq 0.$$
However, this complementarity relation cannot be obtained because $(x^\epsilon)_j>0$ and $c_j(x^\epsilon)_j +  a_j>0$.
This completes our proof.
\qed\end{proof}

Note that the conditions $p_j(\xi)\leq_{a.s.} p_0$ can be easily satisfied in many practical applications.
For example, with given data sets of $p(\xi)$ we can always find an upper bound $p_0:=\max_j\{p_j(\xi)\}$.
\begin{lemma}
\label{lem1}
Suppose there exists a constant $p_0>0$ such that for all $j\in\mathcal{J}$, $p_j(\xi)\leq_{a.s.} p_0$.
Then, there exists a sequence $\{\epsilon_k\}_{k=1}^\infty$ with $\epsilon_k\downarrow0$ as $k\rightarrow\infty$ such that $x^k\to\hat{x}$ and $\lambda^k\to \hat{\lambda}$,
$$\lim_{k\rightarrow\infty}\mathbb{E}[\lambda^{k}(\xi)] = \mathbb{E}[\bar{\lambda}(\xi)],$$
where $x^k$ and $\lambda^k(\xi)$ are parts of the unique solution of problem \eqref{RM} with $\epsilon=\epsilon_k$, and $\bar{\lambda}(\xi)$ is part of the least norm solution of the second-stage problem \eqref{RM} with $x=\hat{x}$.
\end{lemma}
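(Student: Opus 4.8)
The plan is to combine the boundedness of $\{x^\epsilon\}$ furnished by Proposition \ref{p4} with a subsequence extraction, the pointwise convergence of the regularized second-stage solutions from Proposition \ref{p2}, the Lipschitz dependence on the first-stage vector (uniform in $\epsilon$) from Lemma \ref{lem3}, and a dominated-convergence step whose integrable envelope is read off from the explicit solution formulas of Theorem \ref{th:cf}. Concretely, since $p_j(\xi)\leq_{a.s.}p_0$ for all $j\in\mathcal{J}$, Proposition \ref{p4} gives that $\{x^\epsilon\}$ stays bounded as $\epsilon\downarrow0$; hence, starting from any sequence $\epsilon_k\downarrow0$, the Bolzano--Weierstrass theorem produces a subsequence --- which I relabel $\{\epsilon_k\}$ --- along which $x^k:=x^{\epsilon_k}\to\hat x$ for some $\hat x\in\mathbb{R}^J_+$, the orthant being closed. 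Fix this $\hat x$, let $\bar z(q(\hat x,\xi))=(\bar y(\xi),\bar\lambda(\xi))$ be the least $l_2$-norm solution of $\mathrm{LCP}(q(\hat x,\xi),M(\xi))$ as in Proposition \ref{p2}, and set $\hat\lambda(\xi):=\bar\lambda(\xi)$.

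Next I would establish that $\lambda^k(\xi)\to\hat\lambda(\xi)$ for a.e. $\xi\in\Xi$. Writing $\lambda^k(\xi)=\lambda^{\epsilon_k}(q(x^k,\xi))$ and inserting $\lambda^{\epsilon_k}(q(\hat x,\xi))$, the triangle inequality gives
\[
\norm{\lambda^k(\xi)-\hat\lambda(\xi)}\le\norm{\lambda^{\epsilon_k}(q(x^k,\xi))-\lambda^{\epsilon_k}(q(\hat x,\xi))}+\norm{\lambda^{\epsilon_k}(q(\hat x,\xi))-\bar\lambda(\xi)}.
\]
Once $k$ is large enough that $\epsilon_k\in(0,1]$, Lemma \ref{lem3} bounds the first term by $L(\xi)\norm{x^k-\hat x}$, which tends to $0$; the second term tends to $0$ by Proposition \ref{p2} (indeed it is $O(\bar\kappa(\xi)\epsilon_k)$). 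This yields the claimed pointwise convergence $\lambda^k\to\hat\lambda$.

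Finally I would pass to expectations via dominated convergence. By Theorem \ref{th:cf}, each component $(\lambda^k(\xi))_j$ equals either $0$ or $\big(p_j(\xi)-\gamma(\xi)(T^{\epsilon_k}+(x^k)_j)\big)/(\epsilon_k\gamma(\xi)+1)$; since the second-stage solution is nonnegative, the latter alternative forces $p_j(\xi)\ge\gamma(\xi)(T^{\epsilon_k}+(x^k)_j)\ge0$, whence $0\le(\lambda^k(\xi))_j\le p_j(\xi)\le p_0$. Thus $\norm{\lambda^k(\xi)}\le\sqrt{J}\,p_0$ a.s. for every $k$, a constant --- hence $P$-integrable --- envelope, so the dominated convergence theorem yields $\lim_{k\to\infty}\mathbb{E}[\lambda^k(\xi)]=\mathbb{E}[\hat\lambda(\xi)]=\mathbb{E}[\bar\lambda(\xi)]$. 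The step demanding the most care is precisely this domination: neither the pointwise Lipschitz constant $L(\xi)$ of Lemma \ref{lem3} nor the constant $\bar\kappa(\xi)$ of Proposition \ref{p2} is known to be integrable, so the envelope cannot be obtained from the pointwise estimates above and must instead be extracted from the partition-based formulas of Theorem \ref{th:cf} together with the sign constraint $\lambda^{\epsilon}\ge0$.
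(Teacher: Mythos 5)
Your proposal is correct, and its skeleton coincides with the paper's proof: boundedness of $\{x^\epsilon\}$ from Proposition \ref{p4} plus a Bolzano--Weierstrass extraction, pointwise a.e.\ convergence of $\lambda^k(\xi)$ via the triangle inequality splitting into the Lipschitz estimate of Lemma \ref{lem3} (uniform in $\epsilon\in(0,1]$) and the $O(\bar\kappa(\xi)\epsilon_k)$ bound \eqref{gs7} of Proposition \ref{p2}, followed by dominated convergence. The one place you diverge is the choice of integrable envelope for the dominated-convergence step, and your choice is arguably the stronger one: you read off from the closed forms \eqref{CFS} of Theorem \ref{th:cf}, together with $\lambda^{\epsilon}\geq 0$, $T^{\epsilon}\geq 0$ and the hypothesis $p_j(\xi)\leq_{a.s.}p_0$, that $0\leq(\lambda^k(\xi))_j\leq p_j(\xi)\leq p_0$, giving the constant bound $\sqrt{J}\,p_0$; the paper instead dominates $\norm{\lambda^k(\xi)-\hat\lambda(\xi)}$ by $4\sqrt{J}\big(\gamma(\xi)\norm{w}_1+\norm{p(\xi)}_1\big)$, where $w$ is a common bound on $\{x^k\}$ and $\hat x$. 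Your constant envelope is trivially $P$-integrable and thus avoids the integrability of $\gamma(\xi)$ and $\norm{p(\xi)}_1$ that the paper's envelope implicitly requires (the paper never verifies it), at the price of invoking the $p_0$ hypothesis directly in this step rather than only through Proposition \ref{p4}; both routes deliver the conclusion, but yours makes the domination self-contained under the stated assumptions.
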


\begin{proof}
From Proposition \ref{p4}, there exists a sequence $\{x^k\}$ such that $\lim_{k\to\infty}x^k=\hat{x}$.
%For given $\xi\in\Xi$, let $\hat{\lambda}(\xi)$ denote the unique accumulative multiplier component of problem (\ref{RM}) with $\hat{x}$.
%Take a subsequence $\{\epsilon_k\}_{k=1}^\infty$ with $\epsilon_k\downarrow0$ as $k\rightarrow\infty$, such that $x^{\epsilon_k}\rightarrow \hat{x}$ as $k\rightarrow\infty$. Denote by $\bar{\lambda}^k(\xi)$ the least norm solution of problem \eqref{SEQ} for $x=x^{\epsilon_k}$ and $\xi\in \Xi$.
%and we assume that $\{x_{\epsilon_k}\}\subseteq [0,w]$ for some vector $w\in\mathbb{R}_+^J$.
%Then $\norm{x_{\epsilon_k}^*}_1\leq \norm{w}_1$ for any $k\geq1$, and
%We want to prove that $\hat{\lambda}$ is the multiplier part of the least norm solution of problem (\ref{RM}) with $x=\hat{x}$.
From \eqref{gs7}, we have
\begin{align*}
&~~~~\norm{\mathbb{E}[\hat{\lambda}(\xi)]- \mathbb{E}[\bar{\lambda}(\xi)]}\\
&\leq \norm{\mathbb{E}[\lambda^k(\xi)] - \mathbb{E}[\bar{\lambda}(\xi)]} + \norm{\mathbb{E}[\hat{\lambda}(\xi)]- \mathbb{E}[\lambda^k(\xi)]} \\
&\leq\mathbb{E}[\bar{\kappa}(\xi)]\epsilon_k + \norm{\mathbb{E}[\hat{\lambda}(\xi)]- \mathbb{E}[\lambda^k(\xi)]}.
\end{align*}
%where $\kappa(x_{\epsilon_k}^*,\xi)\leq3\sqrt{J}\left( \gamma(\xi)^2 \norm{w}_1 + \gamma(\xi)\norm{p(\xi)}_1 \right)$ and the inequality follows \eqref{gs7}.\\
From Lemma \ref{lem3} and \eqref{gs7}, for a.e. $\xi\in\Xi$
$$\norm{\lambda^k(\xi) - \hat{\lambda}(\xi)}\rightarrow 0~\text{as}~k\rightarrow \infty.$$
We have from the derivation that the estimation
\begin{align*}
\norm{\lambda^k(\xi) - \hat{\lambda}(\xi)} &\leq \norm{\lambda^k(\xi)} + \norm{\hat{\lambda}(\xi)}\\
&\leq 4\sqrt{J}\big(\gamma(\xi)\norm{w}_1 + \norm{p(\xi)}_1\big), \quad \text{for a.e. $\xi\in\Xi$},
\end{align*}
where the last term comes from Lemma \ref{lem3} with some vector $w$ with $\{x^k\}_{k=1}^\infty, \hat{x}\subset [0, w]$.
It follows from the Lebesgue Dominated Convergence Theorem, we have
\begin{align*}
\label{gs8}
\norm{\mathbb{E}[\lambda^k(\xi)] - \mathbb{E}[\bar{\lambda}(\xi)]}\rightarrow 0 ~\text{as}~k\rightarrow \infty.
\end{align*}
Then we complete the proof.
\qed\end{proof}

\begin{theorem}
\label{Th3}
Any accumulation point of $\{x^\epsilon,y^\epsilon,\lambda^\epsilon\}$ as $\epsilon\downarrow0$ is a solution of problem \eqref{SEQ}.
\end{theorem}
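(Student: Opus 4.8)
The plan is to combine the structural results already established: from Proposition \ref{p3} we know problem \eqref{RM} has a unique solution $(x^\epsilon, y^\epsilon, \lambda^\epsilon)$ for each $\epsilon>0$, and from Proposition \ref{p4} (whose hypothesis $p_j(\xi)\leq_{a.s.} p_0$ we adopt here) the family $\{x^\epsilon\}$ is bounded, so accumulation points exist. Fix an accumulation point $\hat{x}$ and, via Lemma \ref{lem1}, extract a sequence $\epsilon_k\downarrow 0$ with $x^k\to\hat{x}$, $\lambda^k(\xi)\to\hat\lambda(\xi)$ for a.e.\ $\xi$, and $\mathbb{E}[\lambda^k(\xi)]\to\mathbb{E}[\bar\lambda(\xi)]$, where $\bar\lambda(\xi)$ is part of the least $l_2$-norm solution of the second-stage LCP at $x=\hat{x}$. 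I also set $\hat{y}(\xi):=\lim_k y^k(\xi)$; existence of this limit for a.e.\ $\xi$ follows from the Lipschitz bound of Lemma \ref{lem3} together with the convergence $z^{\epsilon}(q(\hat x,\xi))\to\bar z(q(\hat x,\xi))$ of Proposition \ref{p2}. The goal is then to verify the three complementarity blocks of \eqref{SEQ} at $(\hat x,\hat y,\hat\lambda)$.

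First I would pass to the limit in the two second-stage blocks of \eqref{RM}. For each $k$ and a.e.\ $\xi$ the pair $(y^k(\xi),\lambda^k(\xi))$ satisfies the complementarity system with matrix $M^{\epsilon_k}(\xi)$ and vector $q(x^k,\xi)$; since $M^{\epsilon_k}(\xi)\to M(\xi)$ entrywise, $q(x^k,\xi)\to q(\hat x,\xi)$, and the variables converge, the closedness of the LCP complementarity relations (finitely many inequalities and the orthogonality condition, all preserved under limits) gives that $(\hat y(\xi),\hat\lambda(\xi))$ solves LCP$(q(\hat x,\xi),M(\xi))$ for a.e.\ $\xi$. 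In fact, by Proposition \ref{p2}, $(\hat y(\xi),\hat\lambda(\xi))=(\bar y(\xi),\bar\lambda(\xi))$ is exactly the least $l_2$-norm solution, which in particular lies in $\mathfrak{L}^\infty_J\times\mathfrak{L}^1_J$ by the bounds in Lemma \ref{lem2} and the proof of Lemma \ref{lem1}. This handles the second row of \eqref{SEQ}.

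Next I would handle the first-stage block. For each $k$, $x^k\geq 0$ and $C x^k - \mathbb{E}[\lambda^k(\xi)] + a \geq 0$ with $x^k \perp (C x^k - \mathbb{E}[\lambda^k(\xi)] + a)$. Taking $k\to\infty$ and using $x^k\to\hat x$ together with $\mathbb{E}[\lambda^k(\xi)]\to\mathbb{E}[\bar\lambda(\xi)]=\mathbb{E}[\hat\lambda(\xi)]$ (the crucial convergence supplied by Lemma \ref{lem1}), the inequalities $\hat x\geq 0$ and $C\hat x - \mathbb{E}[\hat\lambda(\xi)] + a \geq 0$ and the orthogonality $\hat x\perp(C\hat x - \mathbb{E}[\hat\lambda(\xi)] + a)$ all survive the limit since each is a closed condition in finitely many real variables. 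Thus $(\hat x,\hat y,\hat\lambda)$ satisfies all of \eqref{SEQ}, i.e., it is a solution; since $\hat x$ was an arbitrary accumulation point of $\{x^\epsilon\}$ and the corresponding $(\hat y,\hat\lambda)$ is determined as the least-norm second-stage solution, every accumulation point of $\{x^\epsilon,y^\epsilon,\lambda^\epsilon\}$ solves \eqref{SEQ}.

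The main obstacle is the interchange of limit and expectation in the first-stage equation: one must justify $\mathbb{E}[\lambda^k(\xi)]\to\mathbb{E}[\hat\lambda(\xi)]$ rather than merely pointwise a.e.\ convergence $\lambda^k(\xi)\to\hat\lambda(\xi)$. This is precisely what Lemma \ref{lem1} secures, via the uniform domination $\|\lambda^k(\xi)\|\leq 4\sqrt{J}(\gamma(\xi)\|w\|_1+\|p(\xi)\|_1)$ for a common $w$ bounding the (bounded) sequence $\{x^k\}$, followed by the Lebesgue Dominated Convergence Theorem; so I would invoke Lemma \ref{lem1} directly here rather than re-deriving the estimate. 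A secondary technical point is ensuring the limiting second-stage functions are genuinely in the required Lebesgue spaces $\mathfrak{L}^\infty_J$ and $\mathfrak{L}^1_J$, which again follows from the explicit bounds of Lemma \ref{lem2} and Proposition \ref{p2}.
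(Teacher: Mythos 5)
Your proposal is correct and follows essentially the same route as the paper: boundedness of $\{x^\epsilon\}$ (Proposition \ref{p4}), the expectation convergence $\mathbb{E}[\lambda^k(\xi)]\to\mathbb{E}[\bar\lambda(\xi)]$ from Lemma \ref{lem1} to pass to the limit in the first-stage complementarity, and Proposition \ref{p2} to identify the limiting second-stage pair with the (least $l_2$-norm) solution of LCP$(q(\hat x,\xi),M(\xi))$. The only cosmetic difference is that the paper phrases the first-stage limit via the `min' NCP function while you argue directly from closedness of the complementarity conditions, and you spell out the second-stage verification that the paper compresses into ``follows from Proposition \ref{p2}.''
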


\begin{proof}
We only need to verify that for any $\epsilon_k\downarrow0$, the accumulation point $\hat{x}$ of subsequence $\{x^{\epsilon_k}\}$ is a first-stage solution of \eqref{SEQ}.
Since $x^{\epsilon_k}$ is the first-stage solution of problem \eqref{RM} for any $\epsilon_k>0$, we have with $x^k=x^{\epsilon_k}$
$$0\leq x^{k} \bot Cx^{k}  -  \mathbb{E}[{\lambda}^{k}(\xi)] + a \geq 0,$$
which, by using the `min' NCP function (see, for example, \cite{RPS1992the}), can be rewritten as
$$\min\{ x^{k}, Cx^{k}  -  \mathbb{E}[{\lambda}^{k}(\xi)] + a \} =0.$$
By Lemma \ref{lem1}, we have
$$0=\lim_{k\rightarrow\infty}\min\{ x^{k} , Cx^{k} -  \mathbb{E}\big[{\lambda}^{k}(\xi)\big] + a \} = \min\{ \hat{x} , C\hat{x}  -  \mathbb{E}[\bar{\lambda}(\xi)] + a \}$$
as $k\rightarrow\infty$. Thus we obtain that
$$\min\{ \hat{x} , C\hat{x}  -  \mathbb{E}[\bar{\lambda}(\xi)] + a \} =0.$$
The statement then follows from Proposition \ref{p2}.
\qed\end{proof}
%{\color{red}From Propositions \ref{p7}, \ref{p4}, and Theorem \ref{Th3}, we have the following assertion of the existence of solutions for problem \eqref{SEQ}.
%\begin{corollary}
%Suppose there exist constant $p_0>0$ such that for all $j\in\mathcal{J}$, $p_j(\xi)\leq_{a.s.} p_0$.
%Then the solution set of problem \eqref{SEQ} is nonempty.
%\end{corollary}}

\subsection{Convergence of the regularized SAA model}
In this subsection, we study the SAA scheme for solving the regularized problem \eqref{RM} and focus on the convergence of the regularized SAA approach.
More specifically, we focus on the SAA convergence analysis and the solution of the first-stage problem.
It is noteworthy that Chen, Sun and Xu considered a discrete approximation scheme in \cite{CSX2017discrete}, which also leads to an approximation of the response variable in the second-stage problem.

Let $\xi_1,\xi_2,\ldots,\xi_\nu$ denote $\nu$ independent identically distributed (i.i.d.) samples.
Then, with slight abuse of notation, we can obtain the following formulation of problem \eqref{RM} with SAA:
\begin{equation}
\label{gs19}
\begin{aligned}
&0\leq x \bot Cx-  \frac{1}{\nu}\sum_{\ell=1}^\nu\lambda(\xi_\ell) + a \geq 0,\\
&0\leq
\begin{pmatrix}
y(\xi_\ell)\\
\lambda(\xi_\ell)
\end{pmatrix}
\bot
\left(
  \begin{array}{cc}
    \Pi(\xi_\ell) & I \\
    -I & \epsilon I \\
  \end{array}
\right)
\begin{pmatrix}
y(\xi_\ell)\\
\lambda(\xi_\ell)
\end{pmatrix}
+
\left(
  \begin{array}{c}
    -p(\xi_\ell) \\
    x \\
  \end{array}
\right)
\geq 0, \,\ell=1,\ldots,\nu.
\end{aligned}
\end{equation}
Or, we can write the problem collectively for all $\nu$ samples
\begin{equation}\label{SLEQ-matrix}
 0 \leq  \left(
 \begin{array}{c}
            x \\
            v_1 \\
            \vdots \\
            v_\nu \\
          \end{array}
        \right)
 \bot
  \left(\begin{array}{cccc}
          C & \frac{1}{\nu} B & \ldots & \frac{1}{\nu}B \\
          -B^T & D^\epsilon_1 &  &  \\
          \vdots &  & \ddots &  \\
          -B^T &  &  & D^\epsilon_\nu \\
        \end{array}
      \right)
      \left(
          \begin{array}{c}
            x \\
            v_1 \\
            \vdots \\
            v_\nu \\
          \end{array}
        \right)
     + \left(
      \begin{array}{c}
          a \\
          q_1 \\
          \vdots \\
          q_\nu \\
        \end{array}
      \right)\geq 0,
\end{equation}
where
$C \in R^{J\times J},$
      $B= \left(\begin{array}{cc}
          0 & -I \\
        \end{array}
      \right)
\in R^{J\times 2J}   $, and for $\ell=1,\ldots,\nu$,
      $D^\epsilon_\ell= \left(\begin{array}{cccc}
          \Pi(\xi_\ell) & I \\
          -I & \epsilon I \\
        \end{array}
      \right) \in R^{2J \times 2J}  $, $ v_\ell=(y(\xi_\ell),$ $\lambda(\xi_\ell))^T,  q_{\ell}=(-p(\xi_\ell),0)^T.$
Thus, \eqref{SLEQ-matrix} is treated as a large-scale deterministic linear complementarity problem:
\begin{equation}\label{RSAA}
0\leq
z
\bot
H^\epsilon z+\bar{q}\geq 0,
\end{equation}
where $z = (x,v_1,...,v_\nu )^T$, $\bar{q}=(a, q_1, \cdots, q_\nu)^T$, and $H^\epsilon$ denotes the
coefficient matrix in \eqref{SLEQ-matrix}.

We have the following assertion of existence and uniqueness of problem \eqref{gs19} by \cite[Theorem 3.1.6]{RPS1992the}.
\begin{proposition}
\label{p5}
For any fixed $\epsilon>0$ and positive integer $\nu$, there exists a unique solution of problem \eqref{gs19}.
\end{proposition}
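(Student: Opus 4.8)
The plan is to work with the equivalent large-scale deterministic formulation \eqref{RSAA}, namely $0\le z\bot H^\epsilon z+\bar q\ge 0$, and to invoke \cite[Theorem 3.1.6]{RPS1992the}, which states that an LCP admits a unique solution for every right-hand side exactly when its defining matrix is a $P$-matrix. So the entire task reduces to showing that $H^\epsilon$ is a $P$-matrix. I would first record two elementary reductions: (i) multiplying each complementarity relation in \eqref{RSAA} by a positive scalar does not change the solution set, so for any positive diagonal matrix $D$ the problem is equivalent to $0\le z\bot (DH^\epsilon)z+D\bar q\ge 0$; and (ii) since $D$ is diagonal, $(DH^\epsilon)_{\alpha\alpha}=D_{\alpha\alpha}H^\epsilon_{\alpha\alpha}$ for every index set $\alpha$, so the principal minors of $DH^\epsilon$ and of $H^\epsilon$ differ only by the positive factors $\det D_{\alpha\alpha}$. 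Hence $H^\epsilon$ is a $P$-matrix iff $DH^\epsilon$ is, and it suffices to exhibit one convenient $D$ for which $DH^\epsilon$ is positive definite, a positive definite matrix being automatically a $P$-matrix.

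The natural choice is $D=\mathrm{diag}(\nu I_J,\,I_{2J},\dots,I_{2J})$, which rescales only the first-stage block and so cancels the factor $\tfrac1\nu$ in front of $B$ in the top row of $H^\epsilon$. With this choice the off-diagonal coupling of $DH^\epsilon$ is $B$ in the top row block and $-B^T$ in the left column block, so the symmetric part is block diagonal: $\tfrac12\big(DH^\epsilon+(DH^\epsilon)^T\big)=\mathrm{diag}\big(\nu C,\ \mathrm{sym}(D^\epsilon_1),\dots,\mathrm{sym}(D^\epsilon_\nu)\big)$, where $\mathrm{sym}(D^\epsilon_\ell)=\mathrm{diag}(\Pi(\xi_\ell),\epsilon I)$. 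Here $\nu C\succ 0$ since every $c_j>0$, $\Pi(\xi_\ell)=\gamma(\xi_\ell)(ee^T+I)\succeq\gamma_0 I\succ 0$ by Assumption \ref{gamma0}, and $\epsilon I\succ 0$. Thus $DH^\epsilon$ is positive definite, hence a $P$-matrix, hence so is $H^\epsilon$, and \cite[Theorem 3.1.6]{RPS1992the} yields the unique solvability of \eqref{gs19}.

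I do not expect a real obstacle; the only delicate point is the asymmetry caused by averaging the second-stage multipliers in the first-stage relation (the $\tfrac1\nu$ factor), which, unlike the situation in Proposition \ref{p3}, prevents the raw matrix $H^\epsilon$ from having a positive definite symmetric part once $\nu$ is large. This is precisely what the diagonal rescaling by $D$ repairs, and it is the crux of the argument. (An alternative would be to compute all principal minors of $H^\epsilon$ directly by taking Schur complements against the block-diagonal part $\mathrm{diag}(D^\epsilon_1,\dots,D^\epsilon_\nu)$, using $\det D^\epsilon_\ell=\det\Pi(\xi_\ell)\det(\epsilon I+\Pi(\xi_\ell)^{-1})>0$ and $B(D^\epsilon_\ell)^{-1}B^T=(\epsilon I+\Pi(\xi_\ell)^{-1})^{-1}\succ 0$, but the rescaling route is shorter and cleaner.)
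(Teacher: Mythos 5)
Your argument is correct, and it ends exactly where the paper's does: unique solvability of \eqref{gs19} via \cite[Theorem 3.1.6]{RPS1992the} once $H^\epsilon$ in \eqref{RSAA} is known to be a P-matrix. The difference is that the paper states the conclusion with only the citation, leaving the P-property of $H^\epsilon$ implicit, whereas you actually verify it, and your verification addresses a genuine subtlety: because the first-stage row carries the averaged coupling $\tfrac{1}{\nu}B$ while the second-stage rows carry $-B^T$, the matrix $H^\epsilon$ itself need not have a positive definite symmetric part (indeed its quadratic form can be made negative for large $\nu$ or small $\epsilon$), so one cannot argue positive definiteness directly as in Proposition \ref{p3}. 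Your left multiplication by $D=\mathrm{diag}(\nu I_J, I_{2J},\dots,I_{2J})$ repairs this: the symmetric part of $DH^\epsilon$ becomes block diagonal with blocks $\nu C$, $\Pi(\xi_\ell)$ and $\epsilon I$, all positive definite under Assumption \ref{gamma0}, so $DH^\epsilon$ is a P-matrix and hence so is $H^\epsilon$ (principal minors change only by positive factors). Note also that either of your two reductions suffices on its own: since the rescaled LCP has the same solution set, you could apply \cite[Theorem 3.1.6]{RPS1992the} (or the standard existence/uniqueness result for positive definite matrices) directly to $DH^\epsilon$ without invoking the invariance of the P-property. In short, same route as the paper, with the missing verification supplied correctly.
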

%\begin{proof}
%Problem \eqref{gs19} has the same solution with the following  problem
%\begin{equation}
%\label{gs19-equivalent}
%\begin{aligned}
%&0\leq x \bot  \nu\cdot Cx-  \sum_{i=1}^\nu\lambda(\xi_\ell) + \nu\cdot a \geq 0,\\
%&0\leq
%\begin{pmatrix}
%y(\xi_\ell)\\
%\lambda(\xi_\ell)
%\end{pmatrix}
%\bot
%\left(
%  \begin{array}{cc}
%    \Pi(\xi_\ell) & I \\
%    -I & \epsilon I \\
%  \end{array}
%\right)
%\begin{pmatrix}
%y(\xi_\ell)\\
%\lambda(\xi_\ell)
%\end{pmatrix}
%+
%\left(
%               \begin{array}{c}
%                 -p(\xi_\ell) \\
%                 x \\
%               \end{array}
%             \right)
%\geq 0, \,i=1,\ldots,\nu.
%\end{aligned}
%\end{equation}
%Then, the coefficient matrix of \eqref{gs19-equivalent} is positive definite.
%Thus, the optimal solution must exist and be unique by \cite[Theorem 3.1.6]{RPS1992the}.
%\qed\end{proof}

Recall the result of Lemma \ref{lem3} and the following proposition can be shown in a similar way as in \cite[Proposition 3.7]{CSW2015regularized}.
\begin{proposition}
\label{p6}
Let $(y^{\epsilon}(\xi),\lambda^{\epsilon}(\xi))$ be the unique solution of the regularized second-stage problem (\ref{RM}) for any $(x,\xi)\in\mathbb{R}_+^J\times\Xi$. Then,
%$\mathbb{E}[\lambda^\epsilon(\xi)]$ is continuous with respect to $x$ and
\begin{equation*}
\frac{1}{\nu}\sum_{\ell=1}^\nu\lambda^\epsilon(\xi_\ell) \rightarrow \mathbb{E}[\lambda^\epsilon(\xi)]
\end{equation*}
%for any fixed $x_0\in\mathbb{R}_+^J$,
with probability (w.p.) $1$ as $\nu\rightarrow\infty$ uniformly on $\mathbb{B}(x,\delta)\cap \mathbb{R}_+^J$ for any $\delta>0$,
\end{proposition}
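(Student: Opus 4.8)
The plan is to establish the uniform strong law of large numbers for the random function $x \mapsto \lambda^\epsilon(\xi_\ell)$ restricted to the compact set $\mathbb{B}(x,\delta)\cap\mathbb{R}^J_+$. Write $F^\epsilon(x) := \mathbb{E}[\lambda^\epsilon(\xi)]$ and $\widehat F^\epsilon_\nu(x) := \tfrac{1}{\nu}\sum_{\ell=1}^\nu \lambda^\epsilon(\xi_\ell)$, where by $\lambda^\epsilon(\xi_\ell)$ we mean the second-stage multiplier component of the unique solution $z^\epsilon(q(x,\xi_\ell))$ guaranteed by Proposition \ref{p3}. The classical result (see e.g.\ \cite[Theorem 7.48]{SDR2009lectures}) states that pointwise SLLN plus a continuity/equicontinuity condition on the integrand plus a uniform integrable dominating function together imply the uniform SLLN on any compact set. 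So the proof reduces to verifying these three ingredients.

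First I would check \emph{measurability and the pointwise SLLN}: for each fixed $x$, $\xi\mapsto\lambda^\epsilon(q(x,\xi))$ is measurable (it is the composition of the single-valued Lipschitz solution map of LCP$(q,M^\epsilon)$ with the measurable data $\xi\mapsto(p(\xi),\gamma(\xi))$), and it is integrable because of the bound developed in Lemma \ref{lem3} together with Lemma \ref{lem2}: on $\mathbb{B}(x,\delta)\cap\mathbb{R}^J_+$ one has $\|\lambda^\epsilon(q(x,\xi))\| \le 4\sqrt{J}\big(\gamma(\xi)\|w\|_1 + \|p(\xi)\|_1\big)$ for a fixed vector $w$ with $\mathbb{B}(x,\delta)\cap\mathbb{R}^J_+ \subset [0,w]$ (exactly the estimate already invoked in the proof of Lemma \ref{lem1}), and this dominating function is integrable under the standing assumptions on $p$ and $\gamma$. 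Hence the ordinary Kolmogorov SLLN gives $\widehat F^\epsilon_\nu(x)\to F^\epsilon(x)$ w.p.$1$ for each fixed $x$.

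Next I would establish \emph{equi-Lipschitz continuity in $x$, uniformly over samples}. This is where Lemma \ref{lem3} does the real work: for each $\xi$ there is $L(\xi)>0$ with $\|\lambda^\epsilon(q(x_1,\xi))-\lambda^\epsilon(q(x_2,\xi))\| \le \|z^\epsilon(q(x_1,\xi))-z^\epsilon(q(x_2,\xi))\| \le L(\xi)\|x_1-x_2\|$ for all $x_1,x_2\in\mathbb{R}^J_+$ and all $\epsilon\in(0,1]$. Inspecting the construction of $L(\xi)$ (it comes from the inverse of $M^\epsilon(\xi)$, controlled via Assumption \ref{gamma0} and the positive-definiteness constant $\tau=\min\{\bar c,\gamma_0(J+1),\epsilon\}$), one sees $L(\xi)$ can be taken of the form $c_0(1+1/\gamma(\xi))$ for a universal constant $c_0$, hence $\mathbb{E}[L(\xi)]<\infty$ under Assumption \ref{gamma0}. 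Averaging, $\widehat F^\epsilon_\nu$ is Lipschitz with modulus $\tfrac{1}{\nu}\sum_\ell L(\xi_\ell)$, which by the SLLN converges to $\mathbb{E}[L(\xi)]$ and is therefore w.p.$1$ eventually bounded by, say, $2\mathbb{E}[L(\xi)]$; and $F^\epsilon$ is Lipschitz with modulus $\mathbb{E}[L(\xi)]$ by passing the bound through the expectation. So the family $\{\widehat F^\epsilon_\nu\}_\nu$ is w.p.$1$ eventually equi-Lipschitz on the compact set $K:=\mathbb{B}(x,\delta)\cap\mathbb{R}^J_+$.

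Finally I would run the \emph{standard compactness argument}: fix $\eta>0$, pick a finite $\eta/(2(2\mathbb{E}[L]+1))$-net $\{x^{(1)},\dots,x^{(m)}\}$ of $K$; by the pointwise SLLN (applied at finitely many points) there is an event of probability one and an index $\nu_0$ beyond which $\|\widehat F^\epsilon_\nu(x^{(i)})-F^\epsilon(x^{(i)})\|<\eta/3$ for all $i$, and on which $\widehat F^\epsilon_\nu$ is equi-Lipschitz; then for arbitrary $x'\in K$, choosing the nearest net point and using the triangle inequality through the common Lipschitz bound gives $\|\widehat F^\epsilon_\nu(x')-F^\epsilon(x')\|<\eta$ for $\nu\ge\nu_0$. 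Since $\eta$ is arbitrary this is uniform convergence on $K$ w.p.$1$, which is the claim. The main obstacle is really the bookkeeping in the second step --- confirming that the Lipschitz constant $L(\xi)$ from Lemma \ref{lem3} can be chosen \emph{uniformly in $\epsilon\in(0,1]$} and \emph{integrable in $\xi$} --- but this follows by tracking the dependence of the bound in \cite[Corollary 2.1]{CX2013newton} on the monotonicity constant $\tau$ and on $\gamma(\xi)$, and everything else is the textbook uniform-SLLN template.
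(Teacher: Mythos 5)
Your proposal is correct and takes essentially the same route as the paper, which disposes of this proposition simply by invoking the Lipschitz continuity of Lemma \ref{lem3} together with the argument of \cite[Proposition 3.7]{CSW2015regularized} --- precisely the uniform-SLLN-on-a-compact-set template (pointwise SLLN, integrable dominating function, equi-Lipschitz continuity in $x$, finite-net argument) that you spell out. One small caveat: your claimed form $L(\xi)=c_0(1+1/\gamma(\xi))$ has the $\gamma$-dependence inverted (the sensitivity of $\lambda^\epsilon$ to $x$ grows with $\gamma(\xi)$, as the explicit solution forms in Theorem \ref{th:cf} show), so integrability of the Lipschitz modulus rests on the integrability of $\gamma(\xi)$ and $p(\xi)$ implicitly assumed throughout the paper (e.g., in the dominated-convergence step of Lemma \ref{lem1}) rather than on Assumption \ref{gamma0} alone --- but this does not affect the validity of your argument.
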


%\begin{proof}
%Note that $\hat{\lambda}_{\epsilon,\xi}^*(\cdot)$ is continuous with respect to $x$ for a.e. $\xi\in \Xi$, see Lemma \ref{lem3}, and dominated by some integrable function. Thus, $\mathbb{E}[\lambda^\epsilon(\cdot,\xi)]$ is continuous with respect to $x$ by the Lebesgue Dominated Convergence Theorem.
%The second assertion is directly derived from \cite[Theorem 7.53]{SDR2009lectures}.
%\qed\end{proof}}
Let $x^\epsilon_\nu$ denote the first J-components of the unique solution of problem of \eqref{gs19}, and we have the following assertion.
\begin{lemma}
\label{lem4}
Suppose there exists $p_0>0$ such that for all $j\in\mathcal{J}$, $p_j(\xi)\leq_{a.s.} p_0$. Then, with $\epsilon\downarrow0$, $\{x_\nu^\epsilon\}$ is bounded.
%Suppose there exist some $\gamma_0>0, p_0>0$ such that $\gamma_\xi \geq \gamma_0, ~{p_\xi}_j\leq p_0$ for almost every $\xi\in \Xi$ and $j\in\mathcal{J}$. Then, for any fixed $\epsilon>0$, $\{x_{\epsilon,\nu}^*\}$ is bounded.
\end{lemma}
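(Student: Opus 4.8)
The plan is to transcribe the proof of Proposition~\ref{p4} to the sampled problem~\eqref{gs19}, replacing the expectation $\mathbb{E}[\lambda^\epsilon(\xi)]$ by the empirical average $\frac{1}{\nu}\sum_{\ell=1}^\nu\lambda^\epsilon(\xi_\ell)$ and establishing the uniform componentwise bound $(x_\nu^\epsilon)_j<\alpha$ for a suitable threshold $\alpha$. First I would invoke Assumption~\ref{gamma0} together with the hypothesis $p_j(\xi)\leq_{a.s.}p_0$ to fix a constant $\alpha>p_0/\gamma_0$; this $\alpha$ depends only on $p_0$ and $\gamma_0$, hence is independent of $\epsilon$ and of $\nu$, and $\gamma_0\alpha-p_j(\xi)>0$ holds for a.e.\ $\xi\in\Xi$ and every $j\in\mathcal{J}$. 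Since $\xi_1,\ldots,\xi_\nu$ are finitely many copies of $\xi$, with probability one the inequality $p_j(\xi_\ell)<\gamma_0\alpha$ holds simultaneously for all $\ell=1,\ldots,\nu$ and all $j\in\mathcal{J}$; everything below is on this probability-one event, and $x_\nu^\epsilon$ is well defined by Proposition~\ref{p5}.

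Next I would argue by contradiction: suppose $(x_\nu^\epsilon)_j\geq\alpha$ for some $j\in\mathcal{J}$ and some $\epsilon>0$. The key step is to rule out a nonzero $j$-th multiplier at every realized scenario $\xi_\ell$. Applying Theorem~\ref{th:cf} to the regularized second-stage problem at $\xi_\ell$ (legitimate because $x_\nu^\epsilon\geq0$, which is forced by the first block of~\eqref{gs19}), the component $\big((y^\epsilon)_j,(\lambda^\epsilon)_j\big)$ at $\xi_\ell$ is either $(0,0)$, or of the first form in~\eqref{CFS} (whose second entry is $0$), or of the second form; in the last case $(\lambda^\epsilon)_j\geq0$ would force $\gamma(\xi_\ell)(T^\epsilon+x_j)-p_j(\xi_\ell)\leq0$, contradicting
\[
\gamma(\xi_\ell)\big(T^\epsilon+(x_\nu^\epsilon)_j\big)-p_j(\xi_\ell)\ \geq\ \gamma_0(x_\nu^\epsilon)_j-p_j(\xi_\ell)\ \geq\ \gamma_0\alpha-p_j(\xi_\ell)\ >\ 0,
\]
where I use $T^\epsilon\geq0$ (each $(y^\epsilon)_i\geq0$) and $\gamma(\xi_\ell)\geq\gamma_0$. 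Hence $(\lambda^\epsilon(\xi_\ell))_j=0$ for every $\ell=1,\ldots,\nu$, so $\frac{1}{\nu}\sum_{\ell=1}^\nu(\lambda^\epsilon(\xi_\ell))_j=0$.

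Finally I would feed this into the $j$-th row of the first-stage complementarity of~\eqref{gs19}, which then reads $0\leq(x_\nu^\epsilon)_j\ \bot\ c_j(x_\nu^\epsilon)_j+a_j\ \geq\ 0$; since $(x_\nu^\epsilon)_j\geq\alpha>0$ while $c_j(x_\nu^\epsilon)_j+a_j>0$ (because $c_j,a_j>0$), this is impossible. Therefore $(x_\nu^\epsilon)_j<\alpha$ for every $j\in\mathcal{J}$, every $\epsilon>0$ and every $\nu$, so $\{x_\nu^\epsilon\}$ is bounded as $\epsilon\downarrow0$ (and in fact uniformly in $\nu$ as well), w.p.\ $1$. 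The only delicate point I anticipate is the measure-theoretic bookkeeping in the discrete setting — the almost-sure inequality $p_j(\xi)<\gamma_0\alpha$ has to be invoked at all $\nu$ realized scenarios at once, which is harmless since a finite union of $P$-null sets is $P$-null — together with recording that the threshold $\alpha$ is uniform in $\epsilon$ and $\nu$; beyond that the argument is a routine repeat of Proposition~\ref{p4}.
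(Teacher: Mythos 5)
Your proposal is correct and is exactly the argument the paper intends: the paper omits the proof of Lemma~\ref{lem4} precisely because it is the proof of Proposition~\ref{p4} transcribed to \eqref{gs19}, with $\mathbb{E}[\lambda^\epsilon(\xi)]$ replaced by the empirical average and Theorem~\ref{th:cf} applied at each realized scenario $\xi_\ell$ to force $(\lambda^\epsilon(\xi_\ell))_j=0$ when $(x_\nu^\epsilon)_j\geq\alpha$. Your additional bookkeeping (the probability-one event over the finitely many samples, and the uniformity of $\alpha$ in $\epsilon$ and $\nu$) is a harmless and accurate refinement of the same approach.
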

We omit the proof since it can be shown analogously as in Proposition \ref{p4}.

\begin{theorem}
\label{Th1}
Suppose there exists $p_0>0$ such that for all $j\in\mathcal{J}$, $p_j(\xi)\leq_{a.s.} p_0$.
Then, for any fixed $\epsilon>0$,
$x^\epsilon_\nu\rightarrow x^\epsilon$ w.p. $1$ as $\nu\rightarrow\infty$.
\end{theorem}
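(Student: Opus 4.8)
The plan is to show that the first-stage solution $x^\epsilon_\nu$ of the SAA problem \eqref{gs19} converges to the first-stage solution $x^\epsilon$ of the regularized problem \eqref{RM} by exploiting the strong monotonicity of the regularized operator together with the uniform law of large numbers from Proposition \ref{p6}. First I would fix $\epsilon>0$ and recast both problems in terms of the first-stage variable alone. For a given $x\in\mathbb{R}_+^J$, let $z^\epsilon(q(x,\xi))=(y^\epsilon(x,\xi),\lambda^\epsilon(x,\xi))$ be the unique solution of the regularized second-stage LCP$(q,M^\epsilon)$ (Proposition \ref{p3}), and note by Lemma \ref{lem3} that $x\mapsto\lambda^\epsilon(x,\xi)$ is Lipschitz on $\mathbb{R}_+^J$ with an integrable modulus. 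Then $x^\epsilon$ solves the ``reduced'' complementarity problem $0\le x\perp Cx-\mathbb{E}[\lambda^\epsilon(x,\xi)]+a\ge 0$, while $x^\epsilon_\nu$ solves the sample-average analogue $0\le x\perp Cx-\tfrac1\nu\sum_{\ell=1}^\nu\lambda^\epsilon(x,\xi_\ell)+a\ge 0$. Both reduced operators are strongly monotone: the map $x\mapsto Cx-\mathbb{E}[\lambda^\epsilon(x,\xi)]+a$ inherits strong monotonicity from the positive definiteness established in Proposition \ref{p3} (the Schur-complement argument there shows the reduced operator is strongly monotone with modulus independent of $\epsilon$ on $(0,1]$), and the same holds sample-path-wise for the SAA operator.

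Next I would pass to the limit. By Lemma \ref{lem4}, $\{x^\epsilon_\nu\}_\nu$ is bounded w.p.\ $1$ (uniformly in the relevant range), so it suffices to show every accumulation point equals $x^\epsilon$; uniqueness of $x^\epsilon$ then forces convergence of the whole sequence. Let $\hat x$ be such an accumulation point along a subsequence, and confine attention to a ball $\mathbb{B}(\hat x,\delta)\cap\mathbb{R}_+^J$ containing $x^\epsilon_\nu$ for large $\nu$ along that subsequence. Proposition \ref{p6} gives $\tfrac1\nu\sum_{\ell=1}^\nu\lambda^\epsilon(\cdot,\xi_\ell)\to\mathbb{E}[\lambda^\epsilon(\cdot,\xi)]$ w.p.\ $1$ \emph{uniformly} on that ball; combined with the Lipschitz bound of Lemma \ref{lem3} (which controls the variation of $\lambda^\epsilon(\cdot,\xi_\ell)$ between $x^\epsilon_\nu$ and $\hat x$) one gets $\tfrac1\nu\sum_{\ell=1}^\nu\lambda^\epsilon(x^\epsilon_\nu,\xi_\ell)\to\mathbb{E}[\lambda^\epsilon(\hat x,\xi)]$. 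Writing the SAA complementarity relation through the $\min$ NCP function,
\[
\min\Big\{x^\epsilon_\nu,\; Cx^\epsilon_\nu-\tfrac1\nu\textstyle\sum_{\ell=1}^\nu\lambda^\epsilon(x^\epsilon_\nu,\xi_\ell)+a\Big\}=0,
\]
and letting $\nu\to\infty$ along the subsequence, the continuity of $\min\{\cdot,\cdot\}$ yields $\min\{\hat x,\,C\hat x-\mathbb{E}[\lambda^\epsilon(\hat x,\xi)]+a\}=0$, i.e.\ $\hat x$ solves the reduced problem for \eqref{RM}. By Proposition \ref{p3} that solution is unique, so $\hat x=x^\epsilon$, and hence $x^\epsilon_\nu\to x^\epsilon$ w.p.\ $1$.

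The main obstacle I expect is the interchange step making $\tfrac1\nu\sum_{\ell=1}^\nu\lambda^\epsilon(x^\epsilon_\nu,\xi_\ell)\to\mathbb{E}[\lambda^\epsilon(\hat x,\xi)]$ rigorous: Proposition \ref{p6} supplies uniform convergence only on a fixed ball, so one has to argue that $x^\epsilon_\nu$ stays in such a ball for large $\nu$ (this is where Lemma \ref{lem4}'s boundedness is essential) and then split the error into a ``uniform SAA error'' term, controlled by Proposition \ref{p6}, and a ``displacement'' term $\|\tfrac1\nu\sum_\ell[\lambda^\epsilon(x^\epsilon_\nu,\xi_\ell)-\lambda^\epsilon(\hat x,\xi_\ell)]\|\le(\tfrac1\nu\sum_\ell L(\xi_\ell))\|x^\epsilon_\nu-\hat x\|$, whose coefficient is bounded w.p.\ $1$ by the classical law of large numbers applied to $L(\xi)$ (invoking, as elsewhere in the paper, that $L(\xi)$ is integrable). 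A secondary technical point is verifying that the reduced SAA operator is genuinely strongly monotone sample-path-wise so that $x^\epsilon_\nu$ is well defined and unique for every $\nu$ — but this is exactly Proposition \ref{p5}, which I would simply cite. Everything else is the standard strong-monotonicity-plus-ULLN template, so I would keep those parts brief.
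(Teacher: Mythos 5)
Your proposal is correct and follows essentially the same route as the paper: boundedness of $\{x^\epsilon_\nu\}$ via Lemma \ref{lem4}, uniform convergence of $\frac{1}{\nu}\sum_{\ell=1}^\nu\lambda^\epsilon(\cdot,\xi_\ell)$ to $\mathbb{E}[\lambda^\epsilon(\cdot,\xi)]$ on compact sets via Proposition \ref{p6}, and uniqueness of $x^\epsilon$ via Proposition \ref{p3} (with Proposition \ref{p5} giving well-definedness of $x^\epsilon_\nu$). The only difference is that the paper finishes by citing \cite[Proposition 19]{S2003monte}, whereas you unpack that final step into the explicit accumulation-point and $\min$-NCP-function argument that the paper itself uses for Theorem \ref{Th3}; this is a matter of presentation, not substance.
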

\begin{proof}
From Propositions \ref{p3}, Proposition \ref{p5}, and Lemma \ref{lem4}, for any fixed $\epsilon>0$, both the regularized problem (\ref{RM}) and its SAA-regularized problem (\ref{gs19}) have solutions and contained in some compact subset in $\mathbb{R}_+^J$. We know from Proposition \ref{p6} that
\begin{equation*}
\frac{1}{\nu}\sum_{\ell=1}^\nu\lambda^\epsilon(\xi_\ell) \rightarrow \mathbb{E}[\lambda^\epsilon(\xi)]
\end{equation*}
as $\nu\rightarrow\infty$, uniformly with respect to $x$ on any compact set.
Then, we have $x^\epsilon_\nu\rightarrow x^{\epsilon}$ w.p. $1$ as $\nu\rightarrow\infty$ by \cite[Proposition 19]{S2003monte}.
\qed\end{proof}

Combining Theorem \ref{Th3} with Theorem \ref{Th1},  we have the following convergence result.

%\begin{theorem}
%\label{Th2}
%Suppose that: (i) there exists some $\gamma_0>0$ such that $\gamma(\xi)\geq \gamma_0$ for almost every $\xi\in \Xi$, and (ii) $\{x_{\epsilon_k}^*\}$ are solutions of \eqref{zgs1} with fixed $\epsilon_k$ and $x_{\epsilon_k}^*\rightarrow x^*$ as $k\rightarrow\infty$. Then,
%$$\lim_{k\to\infty}\lim_{\nu\to\infty}x_{\epsilon_k,\nu}^*=x^*$$
%with probability $1$, which, of course, implies that
%$$\lim_{k\to\infty}\lim_{\nu\to\infty} d (x_{\epsilon_k,\nu}^*, S^*)=0$$
%with probability $1$, where $S^*$ denotes the optimal solution set of the first-stage problem of \eqref{SEQ}.
%\end{theorem}
%\begin{proof}
%We know from Theorem \ref{Th1} that
%$$\lim_{\nu\to\infty}x_{\epsilon_k,\nu}^*=x_{\epsilon_k}^*$$
%for any fixed $k$, with probability $1$. Thus, according to the assumption, we obtain that
%$$\lim_{k\to\infty} \left(\lim_{\nu\to\infty}x_{\epsilon_k,\nu}^* \right) = \lim_{k\to\infty} x_{\epsilon_k}^* = x^*$$
%with probability $1$. Based on Theorem \ref{Th3}, we have $x^*\in S^*$, which indicates
%$$\lim_{k\to\infty}\lim_{\nu\to\infty} d (x_{\epsilon_k,\nu}^*, S^*)=0$$
%with probability $1$. Then we complete all the proof.
%\qed
%\end{proof}

\begin{theorem}
\label{Th2}
Suppose there exists $p_0>0$ such that for all $j\in\mathcal{J}$, $p_j(\xi)\leq_{a.s.} p_0$.
Then,
$$\limsup_{\epsilon\downarrow0}\lim_{\nu\to\infty}x^\epsilon_\nu\subseteq S^*$$
w.p. $1$, where $S^*$ denotes the optimal solution set of the first-stage problem of \eqref{SEQ}.% and $\limsup$ denotes the outer limit.(see \cite[Chapter 4]{RW2009variational}).
\end{theorem}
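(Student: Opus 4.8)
The plan is to combine the two one-parameter limits that have already been established: Theorem~\ref{Th1} handles the sample-size limit $\nu\to\infty$ for each fixed $\epsilon>0$, and Theorem~\ref{Th3} handles the regularization limit $\epsilon\downarrow0$. So the overall strategy is a standard diagonal/iterated-limit argument. First I would fix $\epsilon>0$. By Theorem~\ref{Th1}, the assumption $p_j(\xi)\le_{a.s.}p_0$ gives $x^\epsilon_\nu\to x^\epsilon$ w.p.~$1$ as $\nu\to\infty$, where $x^\epsilon$ is the unique first-stage component of the solution of the regularized problem~\eqref{RM} (Proposition~\ref{p3}). Hence the inner limit $\lim_{\nu\to\infty}x^\epsilon_\nu$ is well defined (w.p.~$1$) and equals the singleton $\{x^\epsilon\}$.

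Next I would take the outer limit. Let $\epsilon_k\downarrow0$ be any sequence. By Proposition~\ref{p4} (again invoking $p_j(\xi)\le_{a.s.}p_0$) the family $\{x^{\epsilon_k}\}$ is bounded, so it has accumulation points; by Theorem~\ref{Th3} every accumulation point $\hat{x}$ of $\{x^\epsilon\}$ as $\epsilon\downarrow0$ together with the corresponding second-stage pieces solves~\eqref{SEQ}, so in particular $\hat{x}\in S^*$. Therefore $\limsup_{\epsilon\downarrow0}\{x^\epsilon\}\subseteq S^*$. Combining, for w.p.-$1$ realizations of the sample sequence,
\[
\limsup_{\epsilon\downarrow0}\lim_{\nu\to\infty}x^\epsilon_\nu
=\limsup_{\epsilon\downarrow0}\{x^\epsilon\}\subseteq S^*,
\]
which is the claim.

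I would be careful about two technical points. The first is the meaning of the nested limit superior of sets: $\limsup_{\epsilon\downarrow0}\lim_{\nu\to\infty}x^\epsilon_\nu$ should be read as the set of all limits of sequences $x^{\epsilon_k}_{\nu_k}$ obtained by first letting $\nu\to\infty$ (to land on $x^{\epsilon_k}$, w.p.~$1$) and then extracting a convergent subsequence in $k$; the argument above shows each such limit lies in $S^*$. The second, and the main obstacle, is the handling of the ``w.p.~$1$'' qualifier through the double limit: Theorem~\ref{Th1} gives a null set $N_\epsilon$ for each $\epsilon$, and one must ensure a single null set works along the whole process. This is resolved by restricting to a countable sequence $\epsilon_k\downarrow0$ and setting $N=\bigcup_k N_{\epsilon_k}$, which is still null; on the complement of $N$ all the inner convergences $x^{\epsilon_k}_\nu\to x^{\epsilon_k}$ hold simultaneously, and then the deterministic argument via Theorem~\ref{Th3} applies. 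Since any sequence tending to $0$ has a subsequence among such countable sequences, this suffices to conclude the stated set inclusion w.p.~$1$.
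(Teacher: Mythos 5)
Your proposal is correct and takes essentially the same route as the paper, which gives no detailed argument and simply states that the result follows by combining Theorem~\ref{Th3} (the regularization limit $\epsilon\downarrow0$, with boundedness from Proposition~\ref{p4}) with Theorem~\ref{Th1} (the SAA limit $\nu\to\infty$ for fixed $\epsilon$). Your explicit handling of the null sets along a countable sequence $\epsilon_k\downarrow0$ is a detail the paper leaves implicit, but it does not change the approach.
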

%\begin{proof}
%Let $\{\epsilon_k\}_{k=1}^\infty$ be a sequence with $\epsilon_k\downarrow 0$ as $k\to \infty$.
%Then, let $x^k=x^{\epsilon_k}$ denote the first-stage of the solutions of (\ref{RM}) with $\epsilon=\epsilon_k$.
%Suppose $x^k\rightarrow \hat{x}$ as $k\rightarrow\infty$, i.e., $\hat{x}$ is the accumulation point.
%From Theorem \ref{Th1},
%$$\lim_{\nu\to\infty}x^k_\nu=x^k, $$
%for any fixed $\epsilon^k$, w.p.$1$. Thus it follows that
%$$\lim_{k\to\infty} \left(\lim_{\nu\to\infty}x^k_\nu \right) = \lim_{k\to\infty} x^k = \hat{x},~~\text{w.p.} 1.$$
%Recall Theorem \ref{Th3}, we have $\hat{x}\in S^*$, and by the definition of outer limit,
%$$\limsup_{\epsilon\downarrow0}\lim_{\nu\to\infty}x^\epsilon_\nu\subseteq S^*~~\text{w.p.} 1.$$
%\qed
%\end{proof}
%Combine with the result of Propostion \ref{p2}, we can achieve the following convergence result of our regularized-SSA scheme of solving problem (\ref{SEQ}).
%\begin{corollary}
%The solution set of problem (\ref{gs19}) converges to the solution set of problem (\ref{SEQ}) as $\epsilon\downarrow0$ and $\nu\to\infty$ w.p. $1$.
%\end{corollary}

\section{Numerical tests and applications on crude oil market}
In this section, we firstly carry out numerical experiments using randomly generated data to illustrate the effectiveness of our model and its solution approach.
Furthermore, we adopt our two-stage stochastic CN equilibrium problem to describe market share competition in crude oil market.
The results show that the model is capable of reproducing the actual oil market share based on in-sample data.
Moreover, it is also shown that the model can make good out-of-sample predictions using historical data.
All the tests are run in MATLAB 2016b on a personal computer with 32GB RAM and 8-core processor ($3.6\times 8 GHz$).

\subsection{Progressive hedging method and smoothing Newton sub-algorithm}
Firstly, randomly generated problems are used for testing our regularized SAA approach to solve the two-stage stochastic CN equilibrium problem.
Recall that the model of interests takes the form of a scenario-based linear complementarity problem (\ref{gs19}) or its equivalent expression (\ref{SLEQ-matrix}) with sufficiently small $\epsilon$.
The solution process adopts the well-known PHM.
The PHM is globally convergent and the convergence rate is linear for problem (\ref{SLEQ-matrix}) with SAA approach.\\
\newpage
\newpage

The classical PHM to solve \eqref{SLEQ-matrix} is as follows

\bigskip
\noindent
\fbox{%

  \parbox{\textwidth}{%
{\bf  Algorithm 1:  Progressive hedging method}

{\bf Step 0.} Given an initial point $x^0 \in \mathbb{R}^J$, let $x_\ell^0=x^0\in \mathbb{R}^J, v_\ell^0 \in \mathbb{R}^{2J}$ and $w_\ell^0 \in \mathbb{R}^J$, for $\ell=1,\ldots,\nu$,   such that $\frac{1}{\nu}\Sigma_{\ell=1}^\nu w_\ell^0=0.$ Set the initial point $z^0=(x^0, v_1^0,\ldots,v_\nu^0)^T$. Choose a step size $r>0.$  Set $k=0.$

{\bf Step 1.} If the point $z^k$  satisfies the  condition
$$ \|\min(z^k, H^\epsilon z^k+\bar{q}) \|\leq 10^{-6}, $$
output the solution $z^k$ and terminate the algorithm; otherwise,  go to Step 2.

{\bf Step 2.}  For $\ell=1,\ldots,\nu$, find $(\hat{x}^k_\ell, \hat{v}_\ell^k)$ that solves linear  complementarity problems
\begin{equation}\label{PHM-subproblem}
  \begin{array}{l}
  0\leq x_\ell \bot C x_\ell +B v_\ell+a +w_\ell^{k} +r(x_\ell-x_\ell^k)  \geq 0,  \\
  0\leq v_\ell \bot  -B^T x_\ell+D^\epsilon_\ell v_\ell  +q_{\ell}+r(v_\ell-v_\ell^k)  \geq 0.
\end{array}
\end{equation}
Then let $\bar{x}^{k+1}=\frac{1}{\nu} \sum_{\ell=1}^\nu \hat{x}_\ell^k$,  and for $\ell=1,\ldots,\nu,$ update
\begin{equation}
  x_\ell^{k+1}=\bar{x}^{k+1}, v_\ell^{k+1}=\hat{v}_\ell^{k}, w_\ell^{k+1}=w_\ell^{k}+r(\hat{x}_\ell^{k}-x_\ell^{k+1}),\notag
\end{equation}
to get point $z^{k+1}=(\bar{x}^{k+1},v_1^{k+1},\ldots,v_\nu^{k+1})^T$.

{\bf Step 3.} Set $k:=k+1;$  go back to Step 1.\vspace{2mm}
  }%
}
\bigskip

The PHM   involves solving $\nu$ independently sample-based LCP \eqref{PHM-subproblem} at each iteration{\color{blue}.} Problem (\ref{PHM-subproblem}) is  well-defined, since for  $\ell=1,\ldots,\nu,$ the coefficient matrix
$$\left(
  \begin{array}{cc}
    C+rI & B \\
    -B^T & D^\epsilon_\ell+rI \\
  \end{array}
\right)\in \mathbb{R}^{3J\times 3J}$$
is positive definite for any $\epsilon>0$.
Thus, it has a unique solution for each $\ell=1,\ldots,\nu$.
For simplicity, denote  (\ref{PHM-subproblem}) as
\begin{equation}\label{PHM-LCP}
0\leq z_\ell\bot \tilde{H}^\epsilon_\ell z_\ell+\tilde{q}_\ell\geq 0,
\end{equation}
where $ z_\ell=\left(
\begin{array}{c}
x_\ell \\
v_\ell \\
\end{array}
\right),
\tilde{H}_\ell^\epsilon=\left(
  \begin{array}{cc}
    C+rI & B \\
    -B^T & D^\epsilon_\ell+rI \\
  \end{array}
\right),
 \tilde{q}_\ell=\left(\begin{array}{cc}a +w_\ell^{k} -rx_\ell^k\\q_{\ell}-r v_\ell^k\end{array}\right)$ with $\ell=1,2,\ldots,\nu.$
 Then, we can equivalently write a large-scale LCP for all $\nu$ samples and solve
  \begin{equation}\label{PHM-s-whole}
  0\leq\left(
    \begin{array}{c}
      z_1 \\
      z_2 \\
      \vdots \\
      z_\nu\ \\
    \end{array}
  \right)
  \bot
  \left(
    \begin{array}{cccc}
      \tilde{H}_1^\epsilon &  &  &  \\
       & \tilde{H}_2^\epsilon &  &  \\
       &  & \ddots &  \\
       &  &  & \tilde{H}_\nu^\epsilon \\
    \end{array}
  \right)
  \left(
    \begin{array}{c}
      z_1 \\
      z_2 \\
      \vdots \\
      z_\nu \\
    \end{array}
  \right)
  +
  \left(
    \begin{array}{c}
      \tilde{q}_1 \\
       \tilde{q}_2 \\
      \vdots \\
       \tilde{q}_\nu \\
    \end{array}
  \right)\geq 0.
\end{equation}
The structure of problem (\ref{PHM-s-whole}) enables us to use  block computation to solve it, which can significantly improves the efficiency of the PHM. For example, suppose $\nu=mN$ with $m, N$ being positive integer number.  The equivalent $m$-block reformulation for  \eqref{PHM-s-whole} reads

    \begin{equation}\label{PHM-BLOCK}
  0\leq\left(
    \begin{array}{c}
      z_{i_1} \\
      z_{i_2} \\
      \vdots \\
      z_{i_m} \\
    \end{array}
  \right)
  \bot
  \left(
    \begin{array}{cccc}
      \tilde{H}_{i_1}^\epsilon &  &  &  \\
       & \tilde{H}_{i_2}^\epsilon &  &  \\
       &  & \ddots &  \\
       &  &  & \tilde{H}_{i_m}^\epsilon \\
    \end{array}
  \right)
  \left(
    \begin{array}{c}
      z_{i_1} \\
      z_{i_2} \\
      \vdots \\
      z_{i_m} \\
    \end{array}
  \right)
  +
  \left(
    \begin{array}{c}
      \tilde{q}_{i_1} \\
       \tilde{q}_{i_2} \\
      \vdots \\
       \tilde{q}_{i_m} \\
    \end{array}
  \right)\geq 0,
\end{equation}
where $i_j=iN+j,  j=1,\ldots, m,  i=0, \ldots, (m-1)N$.

The main computation cost of the PHM is in {\bf Step 2} due to the large sample size $\nu$ despite the relative low cost in solving for one sample.
Block implementation speeds up the computation by better exploring and rebalancing the computational load.
In practice, the number of blocks are adjusted so that the over all computation time is minimized.
To improve the efficiency of the PHM, we also use the warm-start technique suggested in \cite{RS2018solving} to choose an initial point for  subproblem  (\ref{PHM-subproblem}).
More specifically, the solution $z^k$ of subproblem \eqref{PHM-subproblem} at the $k$-th iteration is used as a starting point for the $(k+1)$-th iteration.

In the remaining of this subsection, we focus on problem \eqref{PHM-LCP} for a given sample and omit the subscription $_\ell$ for ease of expression.
In order to take advantage of the sparse structure of the subproblem \eqref{PHM-LCP}, we apply the smoothing Newton method proposed by Chen and Ye \cite{CY1999homotopy} to solve it. In what follows, we give a brief description of the smoothing Newton method.
It is well-known that solving (\ref{PHM-LCP}) for a given sample is equivalent to solving the nonsmooth equation
\begin{equation}\label{non-smooth-problem}
F(z)=\min(z, \tilde{H}^\epsilon z+\tilde{q} )=0.
\end{equation}
The main idea of the smoothing Newton method is to use a smooth approximation function to approximate the nonsmooth function $F$ and then solve the corresponding linear system.
We use the smooth  Gariel-Mor{\'e} approximation function $f: \mathbb{R}^{3J}\times \mathbb{R}_{++}\rightarrow \mathbb{R}^{3J}$  to approximate the nonsmooth function $F$. The density function is $\rho(s)=2/(s^2+4)^{\frac{3}{2}}$ (see \cite{CY1999homotopy} for details). 
%$$f_j(z, \delta)=(z)_j -\int_{-\infty}^{\infty}\max\big(0, (z-\tilde{H}^\epsilon z-\tilde{q})_j-\delta s\big)\rho(s)ds,~j=1,\ldots, 3J, $$
%where $\rho(s)$ is a density function on $\mathbb{R}$ satisfying $\int_{-\infty}^{\infty}\rho(s)ds=1$ and $\rho(s)\geq 0$, $\delta>0$ is a smoothing parameter. It is not difficult to see that $F_j(z)=\lim_{\delta\rightarrow 0}f_j(z,\delta), j=1,2,\ldots, 3J$.
%$f(z,\delta)$ is continuously differentiable with respect to $z$ for any $\delta>0$. For any $(z, \delta)\in \mathbb{R}^{3J}\times \mathbb{R}_{++}$, the Jacobian of the smoothing function $f(z,\delta)$ is
%$$ \nabla_{z} f(z,\delta)= I- \bar{D}(z)(I-\tilde{H}^\epsilon), $$
%where $\bar{D}(z)$ is a diagonal matrix with diagonal elements
%\begin{equation}\label{density-int}
%\bar{D}_{jj}(z)=\int^{(z-\tilde{H}^\epsilon z-\tilde{q})_j/\delta}_{-\infty}\rho(s)ds, \quad j=1,\ldots,3J.
%\end{equation}
%One key criterion in determining the performance of the smoothing method is the choice of density function $\rho(s)$, as  have been explored in \cite{CY1999homotopy}, and we use
%$$\rho (s)=\frac{2}{(s^2+4)^\frac{3}{2}}$$
In our  numerical tests,  the $j$-th component of  $f$ reads
$$  f_j(z,\delta)=(z)_j-\frac{1}{2}\left(\sqrt{(\tilde{H}^\epsilon z +\tilde{q}-z)_j^2+4\delta^2}+(z-\tilde{H}^\epsilon z-\tilde{q})_j\right),  \quad j=1,\ldots,3J, $$
 where the corresponding $j$-th diagonal element of the Jacobian $\bar{D}(z)$ is
$$ \bar{D}_{jj}(z)=\frac{1}{2}\left(\frac{(z-\tilde{H}^\epsilon z-\tilde{q})_j}{\sqrt{(z-\tilde{H}^\epsilon z-\tilde{q})_j^2+4\delta^2}}+1 \right) , \quad j=1,\ldots,3J. $$
Then, the smoothing Newton method for solving subproblem (\ref{non-smooth-problem}) requires to solve a linear equation to determine  $d^k$ at each iteration, namely
\begin{equation}\label{sn-subproblem}
    \nabla_{z}f(z^k,\delta_k) d^k+F(z^k)=0,
\end{equation}
where $\nabla_{z}f(z^k,\delta_k)=I-\bar{D}(z^k)(I-\tilde{H}^\epsilon)$, $\delta_k$ decreases to 0 according to the criterion in article \cite{CY1999homotopy}. To guarantee the well-defineness of  \eqref{sn-subproblem}, we make use of the following result.
\begin{theorem}\cite{GM1997smoothing}\label{th:nonsingularity}
For any diagonal matrix $\tilde{D}=diag(\tilde{D}_{jj})\in \mathbb{R}^{J\times J}$ with $0\leq \tilde{D}_{jj}\leq 1, j=1,2,\ldots, J$,  the matrix $I-\tilde{D}(I-A)$  is nonsingular if and only if $A$ is a P-matrix.
\qed\end{theorem}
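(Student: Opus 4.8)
The plan is to prove both directions of the equivalence by working componentwise through the diagonal structure of $\tilde{D}$ and relating the singularity of $I - \tilde{D}(I-A)$ to the sign structure of principal minors of $A$. First I would set up notation: write $\tilde{D} = \mathrm{diag}(d_1,\ldots,d_J)$ with each $d_j \in [0,1]$, and put $N = I - \tilde{D}(I-A)$, so that the $j$-th row of $N$ is $e_j^T - d_j e_j^T(I-A) = (1-d_j)e_j^T + d_j e_j^T A$. The key observation is that the $j$-th row of $N$ is a convex combination (with weights $1-d_j$ and $d_j$) of the $j$-th row of $I$ and the $j$-th row of $A$.

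For the ``if'' direction (assuming $A$ is a P-matrix, show $N$ nonsingular): I would use the classical characterization that $A$ is a P-matrix if and only if the only vector $x$ for which $x_i(Ax)_i \le 0$ for all $i$ is $x = 0$ (the sign-reversal characterization of P-matrices). Suppose $Nx = 0$ for some $x \ne 0$; then for each $j$, $(1-d_j)x_j + d_j (Ax)_j = 0$, so $x_j$ and $(Ax)_j$ have opposite signs (or one of them vanishes) — more precisely $(1-d_j)x_j^2 = -d_j x_j (Ax)_j$, hence $x_j(Ax)_j \le 0$ for every $j$ (and when $d_j = 0$ we get $x_j = 0$, so the index is handled trivially; when $d_j > 0$ we genuinely get the sign reversal). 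By the P-matrix characterization this forces $x = 0$, a contradiction, so $N$ is nonsingular.

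For the ``only if'' direction (contrapositive: if $A$ is not a P-matrix, exhibit a diagonal $\tilde{D}$ with entries in $[0,1]$ making $I - \tilde{D}(I-A)$ singular): here $A$ not a P-matrix means there is a nonzero $x$ with $x_i (Ax)_i \le 0$ for all $i$. For each index $i$, I would solve for the required weight: if $x_i \ne 0$, choose $d_i \in [0,1]$ so that $(1-d_i)x_i + d_i(Ax)_i = 0$, which is possible precisely because $x_i$ and $(Ax)_i$ lie on opposite sides of (or at) zero — explicitly $d_i = x_i/(x_i - (Ax)_i)$, and one checks $d_i \in [0,1]$ from the sign condition (with the degenerate case $(Ax)_i = 0$ giving $d_i = 1$, and $x_i = 0$ giving freedom, say $d_i = 0$). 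With this choice $Nx = 0$ with $x \ne 0$, so $N$ is singular. This part of the argument is essentially the one given by Gabriel and Mor\'e \cite{GM1997smoothing}, so I would either cite it directly or reproduce this short computation.

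The main obstacle — really the only subtle point — is the careful bookkeeping of degenerate indices: indices $j$ with $d_j = 0$ (where the row of $N$ is just $e_j^T$, forcing $x_j = 0$), indices where $x_j = 0$ but $(Ax)_j \neq 0$, and indices where $(Ax)_j = 0$. One must make sure the weight $d_j$ can always be chosen in the closed interval $[0,1]$ and that the sign-reversal condition $x_j(Ax)_j \le 0$ is genuinely equivalent to solvability of the scalar convex-combination equation. Since this is a known lemma, I would keep the exposition brief, state it as quoted from \cite{GM1997smoothing}, and rely on the P-matrix sign-reversal characterization as the engine of both directions.
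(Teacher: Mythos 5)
Your proposal is correct: both directions go through exactly as you describe, with the convex-combination row structure $(1-d_j)e_j^T + d_j e_j^T A$ reducing everything to the Fiedler--Pt\'ak sign-reversal characterization of P-matrices, and the degenerate indices ($d_j=0$, $x_j=0$, $(Ax)_j=0$) are handled consistently. Note that the paper itself gives no proof of this statement -- it is quoted verbatim from Gabriel and Mor\'e -- and your argument is the standard proof of that cited lemma; the only point worth making explicit is the quantifier reading you (correctly) adopted, namely that the ``only if'' direction asserts singularity of $I-\tilde{D}(I-A)$ for \emph{some} admissible $\tilde{D}$ when $A$ fails to be a P-matrix, rather than for the particular $\tilde{D}$ fixed at the start.
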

For any given sample, it is known that $H^\epsilon$ is positive definite and hence a P-matrix.
Moreover,  $\bar{D}(z)$ is a diagonal matrix with its element on the interval $[0,1]$ for any $(z,\delta)\in \mathbb{R}^{3J}\times \mathbb{R}_{++}$.
Therefore, using Theorem \ref{th:nonsingularity}, the Jacobian $\nabla_{z}f(z,\delta)$ is nonsingular for any $(z,\delta)\in \mathbb{R}^{3J}\times \mathbb{R}_{++}$. Thus, the linear equation \eqref{sn-subproblem} is well-defined.

Denoting the matrix
 $\bar{D}(z)=diag(\bar{D}_1(z),\bar{D}_2(z),\bar{D}_3(z))$, the Jacobian  $\nabla _{z} f(z,\delta)$ at the point $z$ is of the following  structure
\begin{eqnarray}
\nabla_{z} f(z,\delta)&= &  (I- \bar{D}(z))+\bar{D}(z)\tilde{H}^\epsilon)  \notag \\
         & \triangleq&   \left(
                  \begin{array}{ccc}
                    \Lambda_1(z) & 0 &  -\bar{D}_1(z) \\
                    0 & u_1(z)e^T +\Lambda_2(z) & \bar{D}_2(z) \\
                    \bar{D}_3(z) & -\bar{D}_3(z)  & \Lambda_3(z) \\
                  \end{array}   \label{smoothing-matrix}
                \right),
\end{eqnarray}
where $\Lambda_1(z)= \bar{D}_1(z)(C+(r-1)I)+I$, $\Lambda_2(z)=(\gamma(\xi)+(r-1))\bar{D}_2(z)+I$,
 $\Lambda_3(z)= (\epsilon+(r-1))\bar{D}_3(z)+I$ are all diagonal matrices, and $u_1(z)=\gamma(\xi)\bar{D}_2(z)e$.
We can take advantage of the sparse structure of (\ref{smoothing-matrix}) in solving \eqref{sn-subproblem}.
Specifically, $\nabla_{z}f(z,\delta)$ consists of only diagonal sub-matrix and the matrix $u_1(z)e^T +\Lambda_2(z)$, where the later is a sum of a diagonal sub-matrix and a rank-one matrix.

Noticing from  \eqref{smoothing-matrix}, linear equation  \eqref{sn-subproblem}  is of the following form
\begin{equation}\label{smooth-linear-eq}
\left(
  \begin{array}{ccc}
    \Lambda_1 & 0 & \Lambda_2 \\
    0 & u_1u_2^T+\Lambda_3 & \Lambda_4 \\
    \Lambda_5 & \Lambda_6 & \Lambda_7 \\
  \end{array}
\right)
\left(
  \begin{array}{c}
    s_1 \\
    s_2 \\
    s_3 \\
  \end{array}
\right)
=\left(
   \begin{array}{c}
     b_1 \\
     b_2 \\
     b_3 \\
   \end{array}
 \right),
\end{equation}
where $\Lambda_i\in \mathbb{R}^J , i=1,2, \ldots, 7$ are diagonal matrices with $\Lambda_1$ and $\Lambda_3$ being nonsingular, $s_i\in \mathbb{R}^J, b_i\in \mathbb{R}^J, i=1,2,3,$ and  $u_1, u_2 \in \mathbb{R}^J.$
For ease of notation, we use $\hat{\Lambda}=diag(1/\Lambda_{11}, \ldots, 1/\Lambda_{JJ})$ to represent the inverse of any invertible diagonal matrix $\Lambda=diag(\Lambda_{11}, \ldots, \Lambda_{JJ})$.
Given the sparse structure of the coefficient matrix, we can solve  \eqref{smooth-linear-eq} efficiently. More exactly, by the first two equations of  \eqref{smooth-linear-eq}, we can get
\begin{align}\label{s1}
  &s_1=\hat{\Lambda}_1(b_1-\Lambda_2 s_3 ),\\
  &s_2=(u_1 u_2^T+\Lambda_3)^{-1}(b_2-\Lambda_4 s_3).\label{s2}
\end{align}
Directly substituting  \eqref{s1} and \eqref{s2} into the third equation of \eqref{smooth-linear-eq}, then we have
\begin{equation}\label{s3}
  (\Lambda_7-\Lambda_5\hat{\Lambda}_1\Lambda_2-\Lambda_6(u_1u_2^T+\Lambda_3)^{-1}\Lambda_4)s_3=b_3+const,
\end{equation}
where $const=-(\Lambda_5\hat{\Lambda}_1 b_1+\Lambda_6(u_1u_2^T+\Lambda_3)^{-1}b_2)$.
For computing the inverse matrix of  $(u_1u_2^T+\Lambda_3)$, the Sherman-Morrison formula is useful.
We have by the Sherman-Morrison formula
\begin{equation}\label{sm-inverse}
  (u_1u_2^T+\Lambda_3)^{-1}=\hat{\Lambda}_3-\frac{\hat{\Lambda}_3 u_1 u_2^T\hat{\Lambda}_3}{1+u_2^T\hat{\Lambda}_3u_1}.
\end{equation}
Substituting  \eqref{sm-inverse} into \eqref{s3}, we get
\begin{equation}\notag
  (\Lambda_0+\alpha \tilde{u}_1 \tilde{u}_2^T)s_3=b_3+const,
\end{equation}
where $\alpha=1/(1+u_2^T\hat{\Lambda}_3u_1),
\Lambda_0=\Lambda_7-\Lambda_5\hat{\Lambda}_1\Lambda_2-\Lambda_6\hat{\Lambda}_3\Lambda_4, \tilde{u}_1=\Lambda_6\hat{\Lambda}_3 u_1, \tilde{u}_2=\Lambda_4\hat{\Lambda}_3 u_2.$
Then, if $\Lambda_0$ is nonsingular, using the Sherman-Morrison formula again, we can immediately get the solution of $s_3$
\begin{equation}\label{s3-solution}
  s_3=\left(\hat{\Lambda}_0-\frac{\alpha\hat{\Lambda}_0 \tilde{u}_1 \tilde{u}_2^T\hat{\Lambda}_0}{1+\alpha \tilde{u}_2^T\hat{\Lambda}_0 \tilde{u}_1}\right)(b_3+const).
\end{equation}
Then, substituting the $s_3$ into \eqref{s1} and \eqref{s2}, we  get the solution of $s_1$ and $s_2$, respectively.

From \eqref{s3-solution}, one can know that the computation cost of $s_3$ is  trivial, since we only need to compute inverse of several diagonal matrix, namely, $\Lambda_0, \Lambda_1,$ and $\Lambda_3$. Once $s_3$ is obtained, the calculation of $s_1$ and $s_2$ just needs to perform matrix-vector production.
Therefore, the linear equation $\eqref{smooth-linear-eq}$ can be solve efficiently.

\subsection{Randomly generated problems}
For the first part of numerical test, we randomly generated the problem of the form (\ref{SLEQ-matrix}).
More specifically, we generate a set of i.i.d.  samples $\{\xi_\ell\}_{\ell=1}^\nu$ from a uniformly distribution over the interval $[0,1]$.
For  $\ell=1,\ldots, \nu$, set
\begin{equation*}
  \begin{split}
      p(\xi_\ell) =((\xi_\ell)_1,(\xi_\ell)_2, \ldots,(\xi_\ell)_J )^T, \quad \quad
      \Pi (\xi_\ell)=\gamma(\xi_\ell)(ee^T+I)\triangleq(\xi_\ell)_1(ee^T+I).
  \end{split}
\end{equation*}
Diagonal matrix $C\in \mathbb{R}^{J\times J}$ and $a\in \mathbb{R}^J$ are generated with its elements uniformly distributed over the interval $[1,2]$.
All the numerical results are based on the average of 10 independent runs.

To show the feasibility of the solution of the regularized problem compared to that of  the original problem,  we compute the following residual value
\begin{equation*}
     {\rm{\bf Res}}=\|\min(z, Hz+\bar{q})\|,
\end{equation*}
where $H$ denotes the coefficient matrix  of \eqref{SLEQ-matrix} with $\epsilon=0$.

Selected numerical results  for $J=10$ were listed in the  Table  \ref{table:player-10} and \ref{table:player-10-block}. The first table is the solution obtained by PHM without block computation, while table \ref{table:player-10-block} illustrates results with block implementation.
For our generated experiments, we found $N=50$ is the best block choice (measured by CPU time) for $J=10$.
The average number of iterations, the average cpu time,  and the average value of  {\bf Res} were recorded in both table.
It is easy to see that the block implementation greatly reduces the cpu time.
For the same value of $\epsilon$, the number of iterations increases slightly when the sample size $\nu$ increases.
In cases where the sample size $\nu$ is kept constant and the values of regularization parameter $\epsilon$ are chosen from $\epsilon=10^{-3}$ to $\epsilon=10^{-12}$, the iteration numbers are barely influenced as well as the cpu time.
Furthermore, we observe the convergence of our regularization approach with decreasing values of $\epsilon$, as have been proved in previous sections.
Also notice that,  the value of  {\bf Res} decreases when the $\epsilon$ diminishes from $10^{-3}$ to $10^{-12}$.
Numerically, it shows that the solution of the regularized problem is also that of the original problem when $\epsilon=10^{-12}$.

Figure \ref{figure:player-10-iter-time} illustrates the performance of the PHM measured by the number of iterations and cpu time for $10$ players.
It is also worth mentioning that although one might expect the problem to be more difficult to solve for a small $\epsilon$, the numerical performance in our experiments remain roughly unaffected with decreasing values of $\epsilon$.

Figure \ref{figure:player-10} demonstrates the convergence property of the first-stage solution $x$ when the sample size gets large for the case $J=10$.
The convergence trend can be seen component-wisely as the solution $x$ converges when the sample size $\nu$ gets large.

\begin{table}[h]\scriptsize%\footnotesize%\scriptsize%\large%\tiny%\scriptsize%\tiny
\begin{center}

\begin{tabular}{cc|ccc|ccc}
\hline
 $\nu$ &  $J(1+2\nu)$ & Iter  & CPU time/s     & ${\bf Res}$           & Iter  & CPU time/s     & ${\bf Res}$ \\
\hline  \multicolumn{2}{c|}{}   & \multicolumn{3}{c|}{$\epsilon=10^{-3}$} & \multicolumn{3}{c}{$\epsilon=10^{-6}$}\\
\hline
                     10 &       210             &     146.30 &       0.26   &   4.42e-01         & 176.20 &       0.32   &   3.88e-04 \\
            50 &      1010           &     194.70 &       1.81   &   9.35e-01         & 197.40 &       1.83   &   9.32e-04 \\
            500 &     10010        &     208.70 &     26.72   &   3.00e+00        & 212.20 &      27.21  &   2.99e-03 \\
            2000 &     40010      &     222.60 &     154.97 &   5.93e+00        & 220.50 &     153.54 &   6.00e-03 \\
            5000 &    100010     &     224.70 &     623.53 &   9.49e+00        & 226.40 &     627.53 &   9.48e-03 \\
\hline
\multicolumn{2}{c|}{}   & \multicolumn{3}{c|}{$\epsilon=10^{-9}$} & \multicolumn{3}{c}{$\epsilon=10^{-12}$}\\
\hline
           10 &       210 &     152.70 &       0.27   &   1.08e-06  &     169.40 &       0.30   &   9.49e-07  \\
           50 &     1010 &    197.20  &       1.83   &   1.41e-06  &     194.40 &       1.80   &   9.75e-07  \\
         500 &     10010 &     212.70 &    27.21  &   3.21e-06  &     209.70 &     26.85   &   9.59e-07  \\
       2000 &     40010 &     220.30 &   153.34 &   6.16e-06  &     220.70 &     153.73 &   9.51e-07  \\
      5000 &    100010 &     226.70 &   628.89 &   9.60e-06  &     226.20 &     627.58   &   9.60e-07  \\
\hline
\end{tabular}
\end{center}
\caption{Numerical results for different  $\epsilon$ and sample size $\nu$, $J=10$ with individual sample. }
\label{table:player-10}
\end{table}

\begin{table}[h]\scriptsize%\footnotesize%\scriptsize%\large%\tiny%\scriptsize%\tiny
\begin{center}

\begin{tabular}{cc|ccc|ccc}
\hline
           $\nu$ &  $J(1+2\nu)$ & Iter  & CPU time/s     & ${\bf Res}$           & Iter  & CPU time/s     & ${\bf Res}$ \\
\hline  \multicolumn{2}{c|}{}   & \multicolumn{3}{c|}{$\epsilon=10^{-3}$} & \multicolumn{3}{c}{$\epsilon=10^{-6}$}\\
\hline
           10 &       210             &     162.60 &       0.21   &   4.04e-01         & 155.50 &       0.20   &   4.14e-04 \\
            50 &      1010           &     180.60 &       0.94   &   9.42e-01         & 184.00 &       0.93   &   9.31e-04 \\
            500 &     10010        &     203.30 &      10.67   &   2.97e+00        & 204.10 &      10.61   &   2.98e-03 \\
            2000 &     40010      &     213.80 &      42.38   &   5.89e+00        & 213.70 &      42.36   &   5.91e-03 \\
            5000 &    100010     &     218.20 &     115.15  &   9.36e+00        & 219.60 &     115.63   &   9.36e-03 \\
\hline
\multicolumn{2}{c|}{}   & \multicolumn{3}{c|}{$\epsilon=10^{-9}$} & \multicolumn{3}{c}{$\epsilon=10^{-12}$}\\
\hline
           10 &       210 &     143.20 &       0.19   &   1.08e-06   &     149.50 &       0.19   &   9.59e-07 \\
           50 &     1010 &    197.40 &       1.02   &   1.38e-06   &     191.10 &       0.97   &   9.58e-07  \\
         500 &     10010 &     205.10 &      10.58   &   3.19e-06 &     202.40 &      10.67   &   9.52e-07  \\
       2000 &     40010 &    214.10 &      42.46   &   6.07e-06  &      213.40 &      42.62   &   9.73e-07   \\
      5000 &    100010 &     219.00 &     115.35   &   9.50e-06 &     219.70 &     115.58   &   9.74e-07  \\
\hline
\end{tabular}
\end{center}
\caption{Numerical results for different  $\epsilon$ and sample size $\nu$, $J=10$ with block implementation. }
\label{table:player-10-block}
\end{table}

\begin{figure}[!htbp]
  \centering
  { \includegraphics[height=4.8cm]{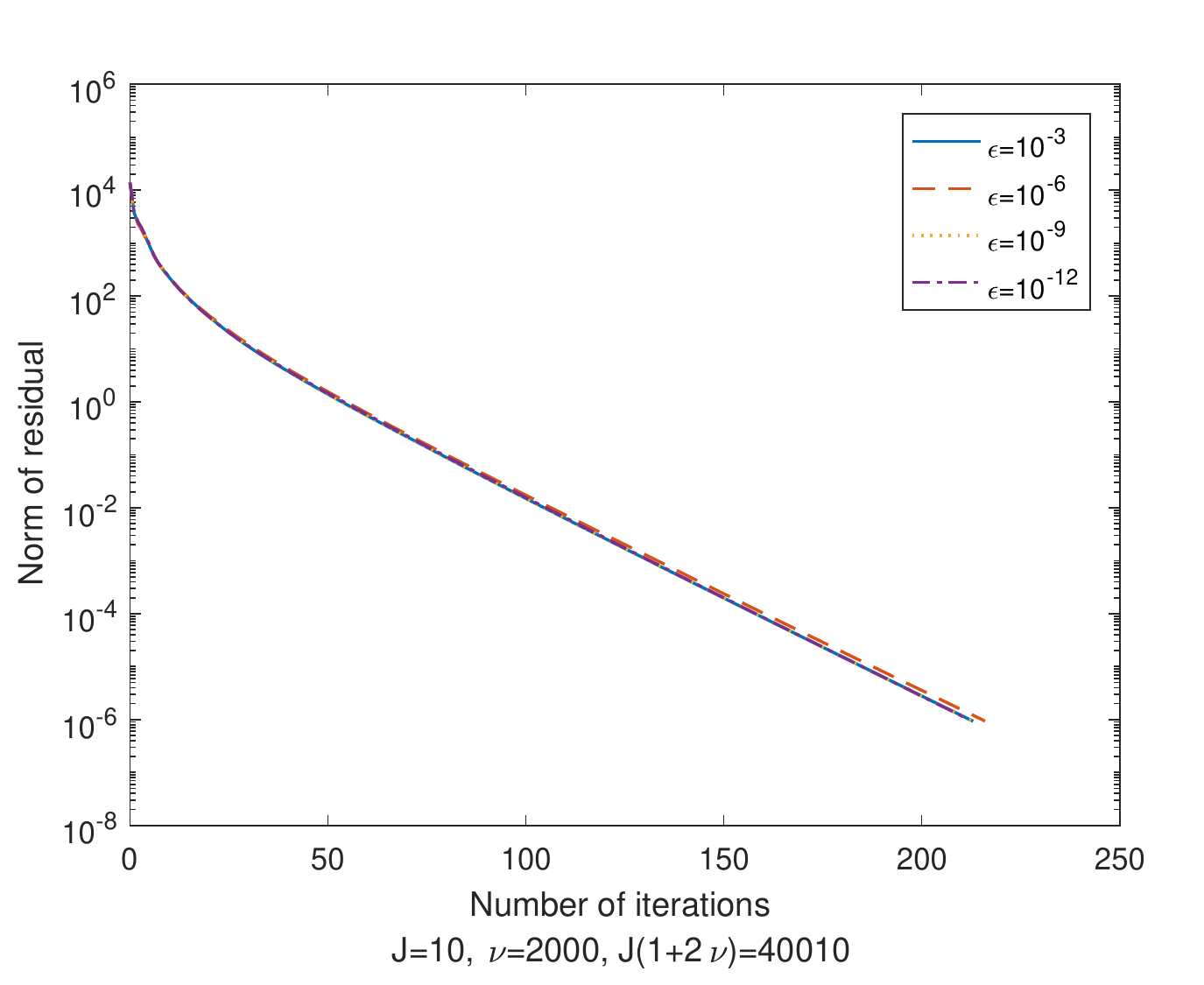}}  \hspace{0em}%
  { \includegraphics[height=4.8cm]{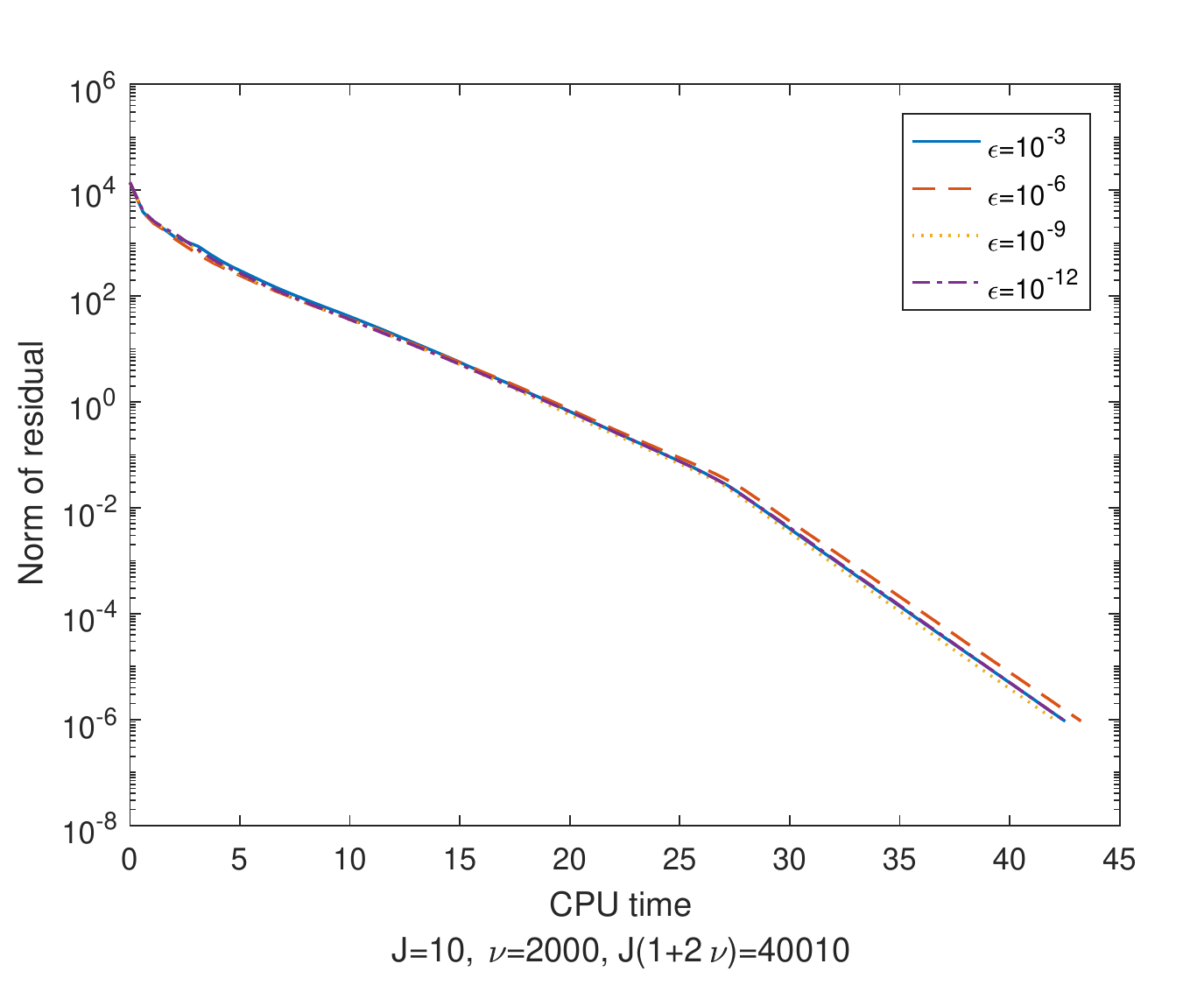}}

  {\includegraphics[height=4.8cm]{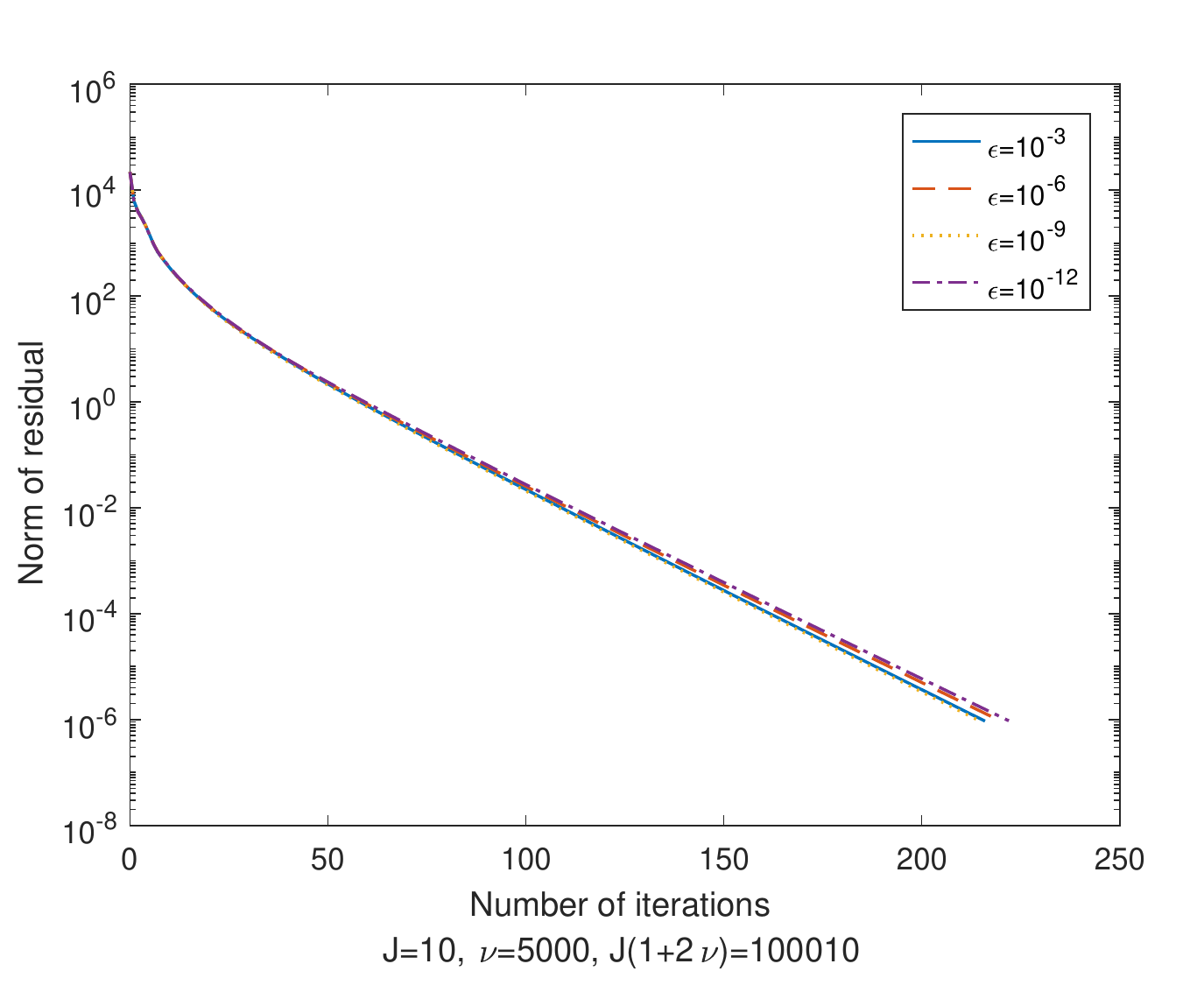}}  \hspace{0em}%
  {\includegraphics[height=4.8cm]{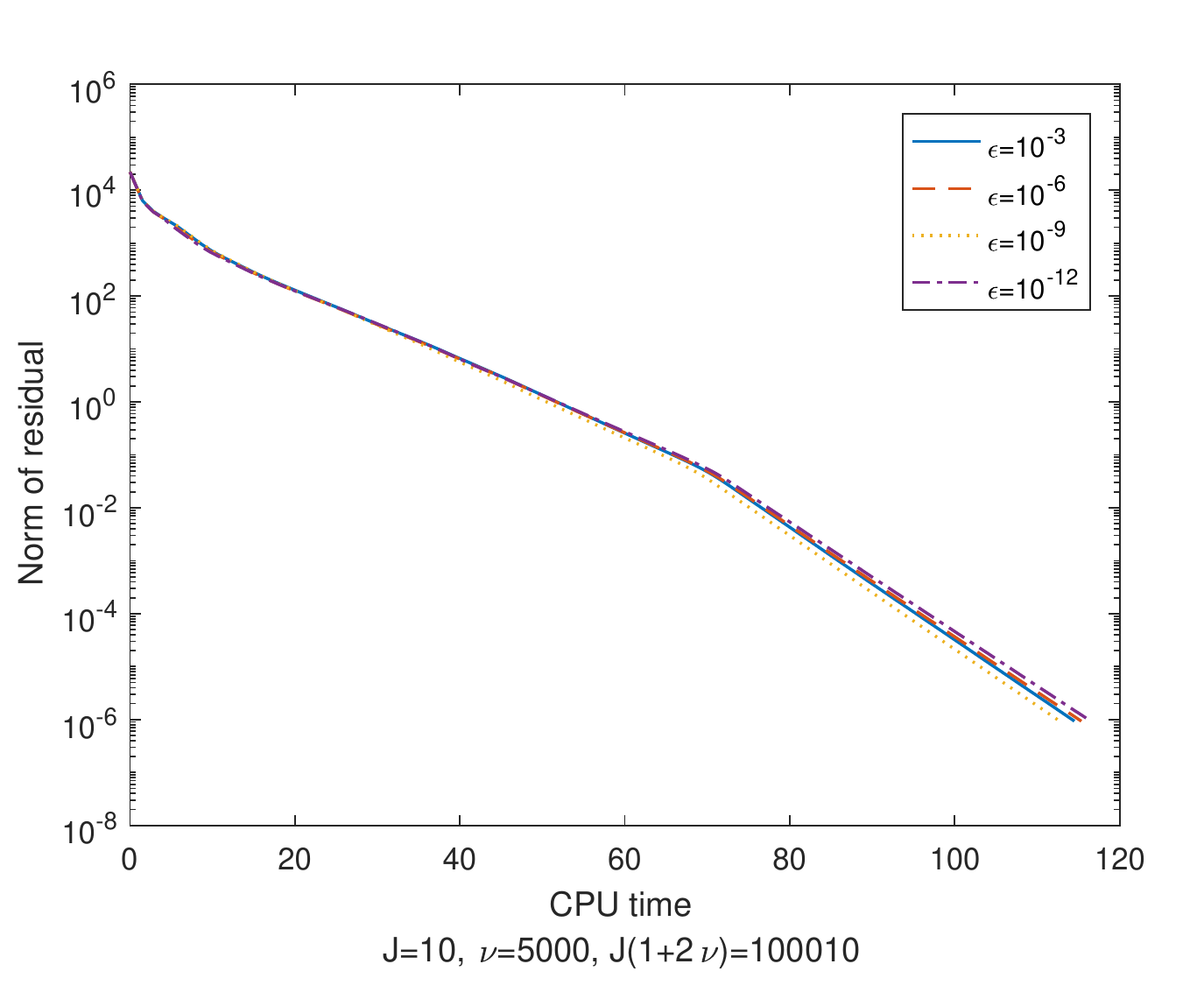}}
  \caption{Numerical comparisons among different $\epsilon$, $J=10$.}
  \label{figure:player-10-iter-time}
\end{figure}

%\begin{figure}[!htbp]
%  \centering
%  { \includegraphics[height=4.8cm]{iteration-10-20-2000.pdf}}  \hspace{0em}%
%  { \includegraphics[height=4.8cm]{time-10-20-2000.pdf}}
%
%  {\includegraphics[height=4.8cm]{iteration-10-20-5000.pdf}}  \hspace{0em}%
%  {\includegraphics[height=4.8cm]{time-10-20-5000.pdf}}
%  \caption{Numerical comparisons among different $\epsilon$, $J=10$.}
%  \label{figure:player-10-iter-time}
%\end{figure}

\begin{figure}[!htbp]
  \centering
%   { \includegraphics[height=4.7cm]{}}  \hspace{0em}%
%   { \includegraphics[height=4.7cm]{}}

   { \includegraphics[height=4.7cm]{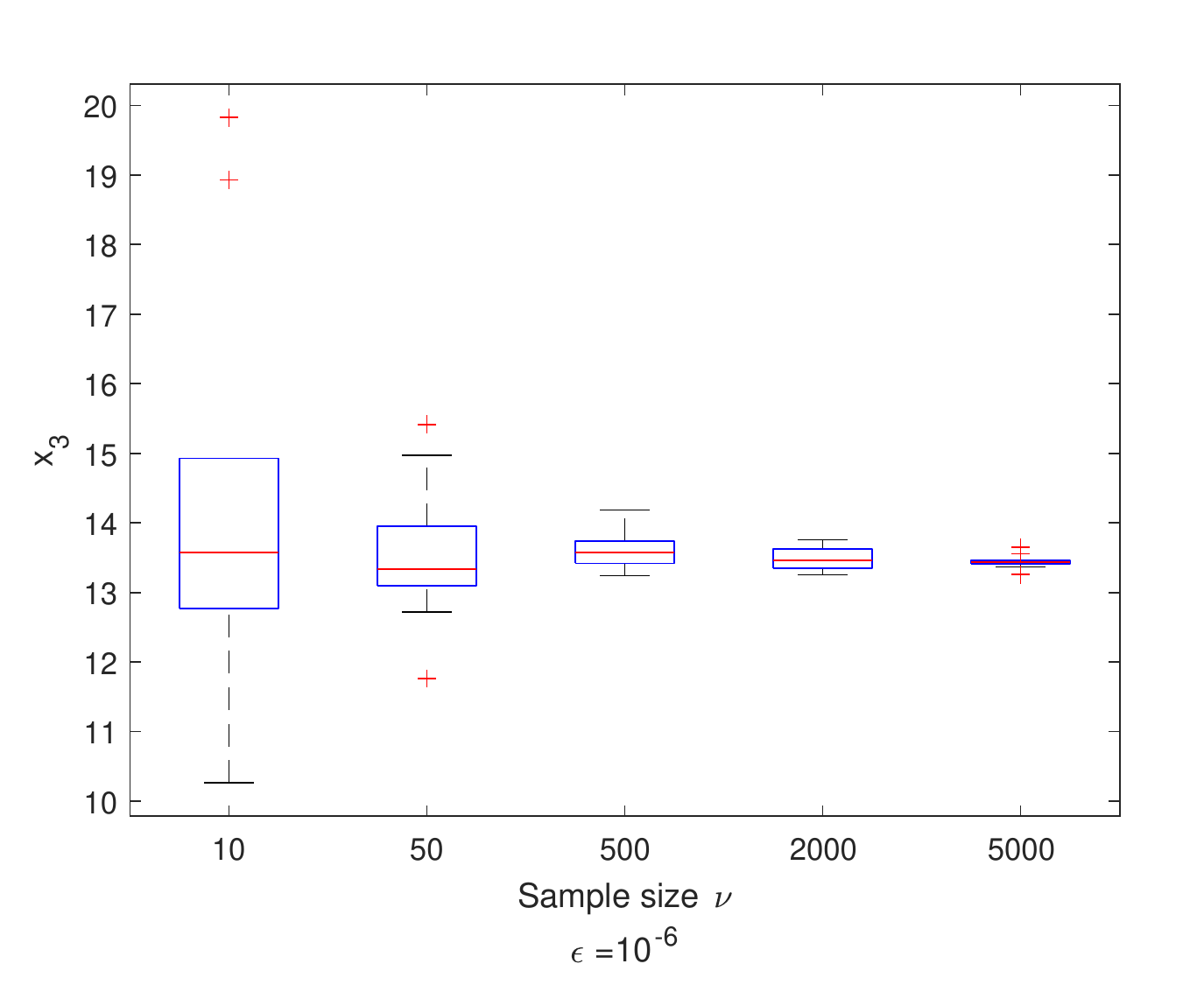}}
   { \includegraphics[height=4.7cm]{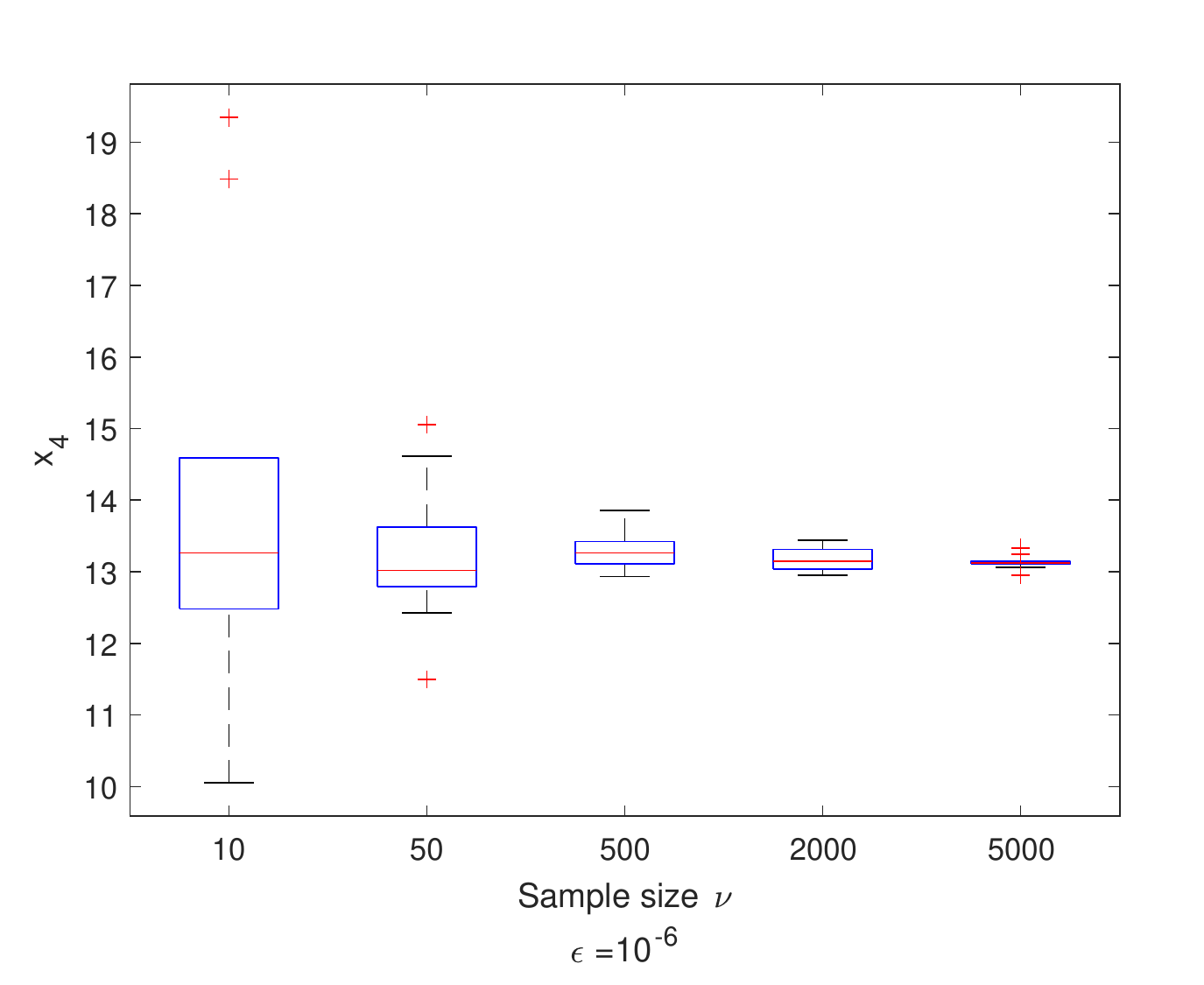}}

   { \includegraphics[height=4.7cm]{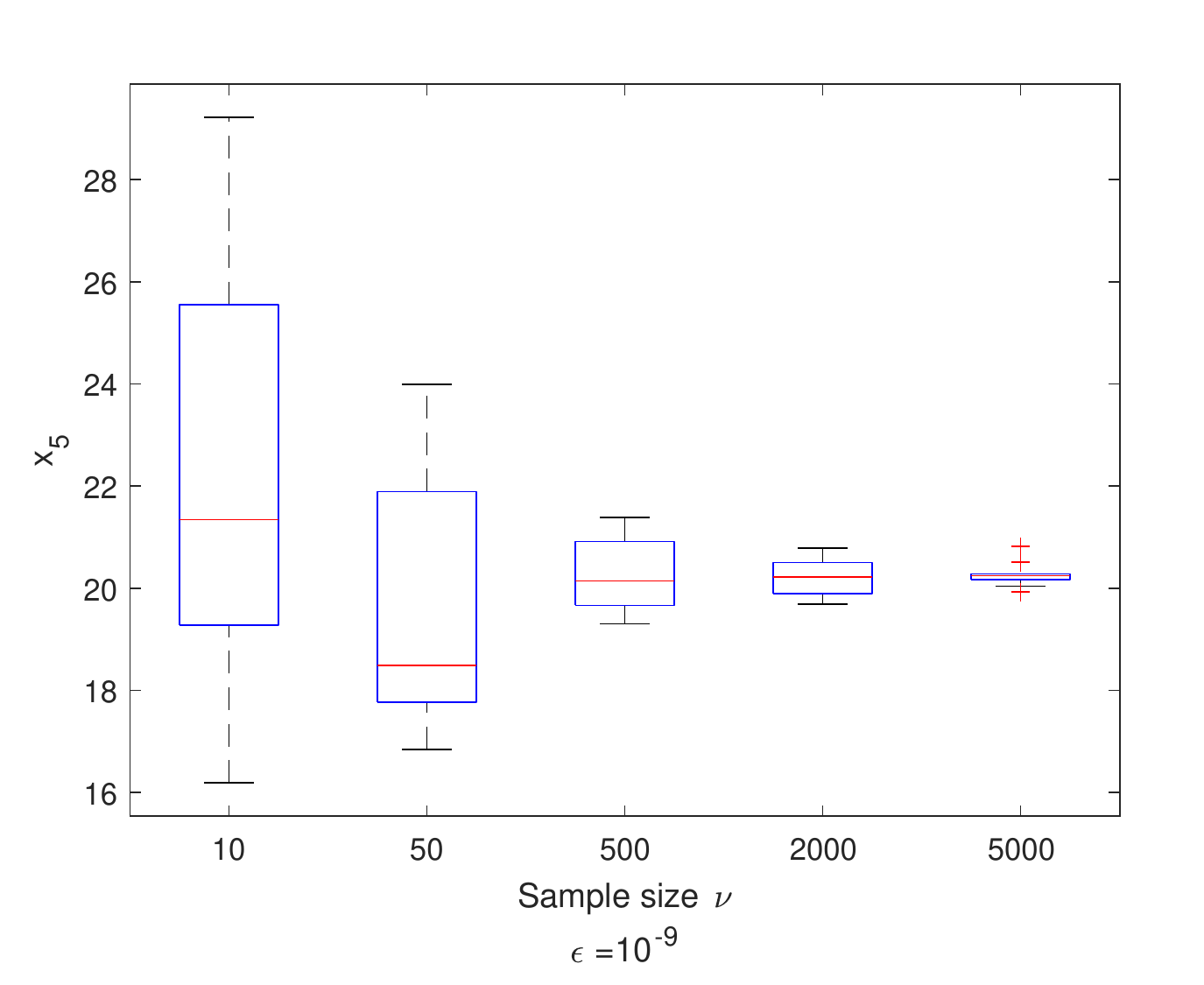}}
   { \includegraphics[height=4.7cm]{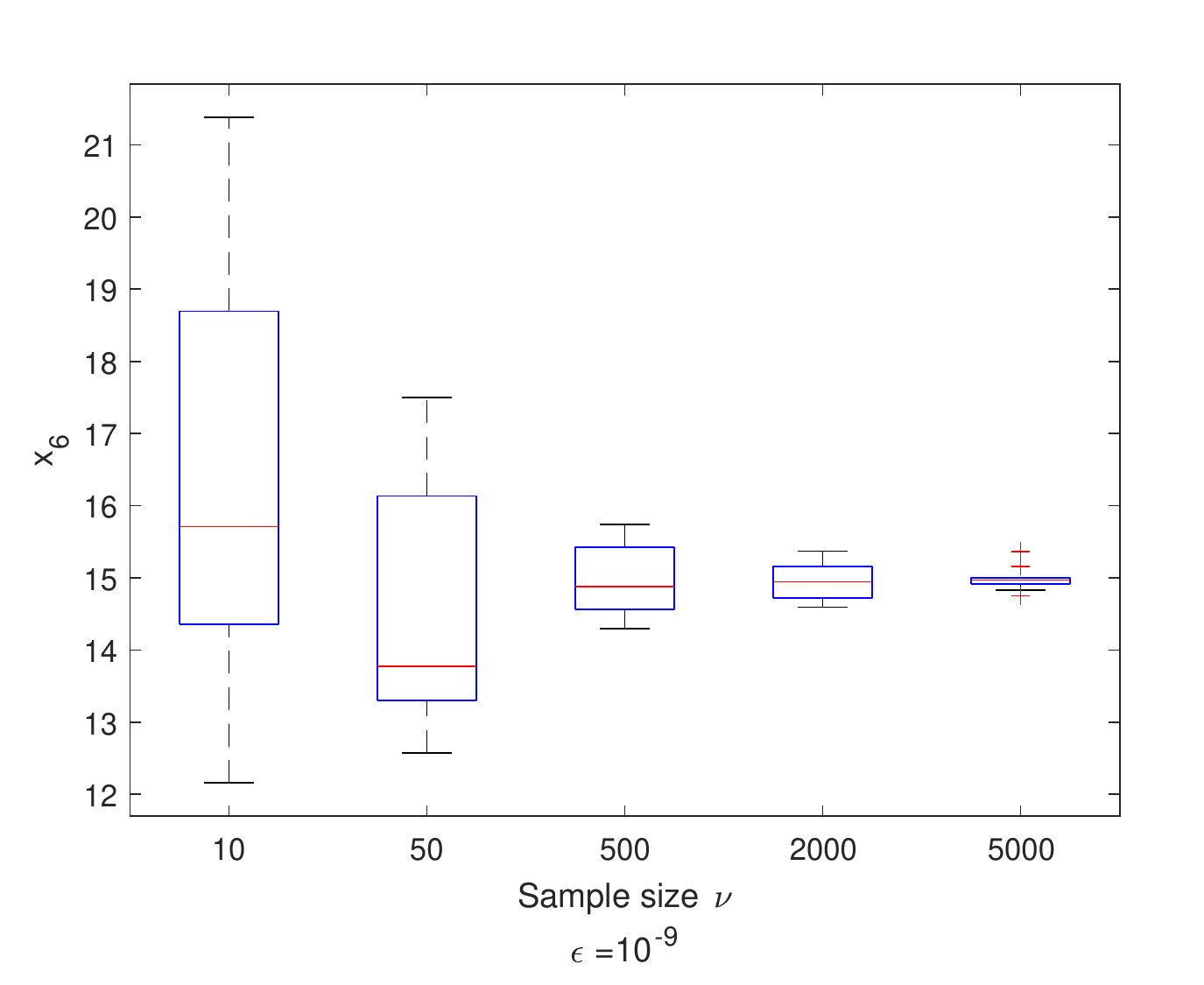}}

   { \includegraphics[height=4.7cm]{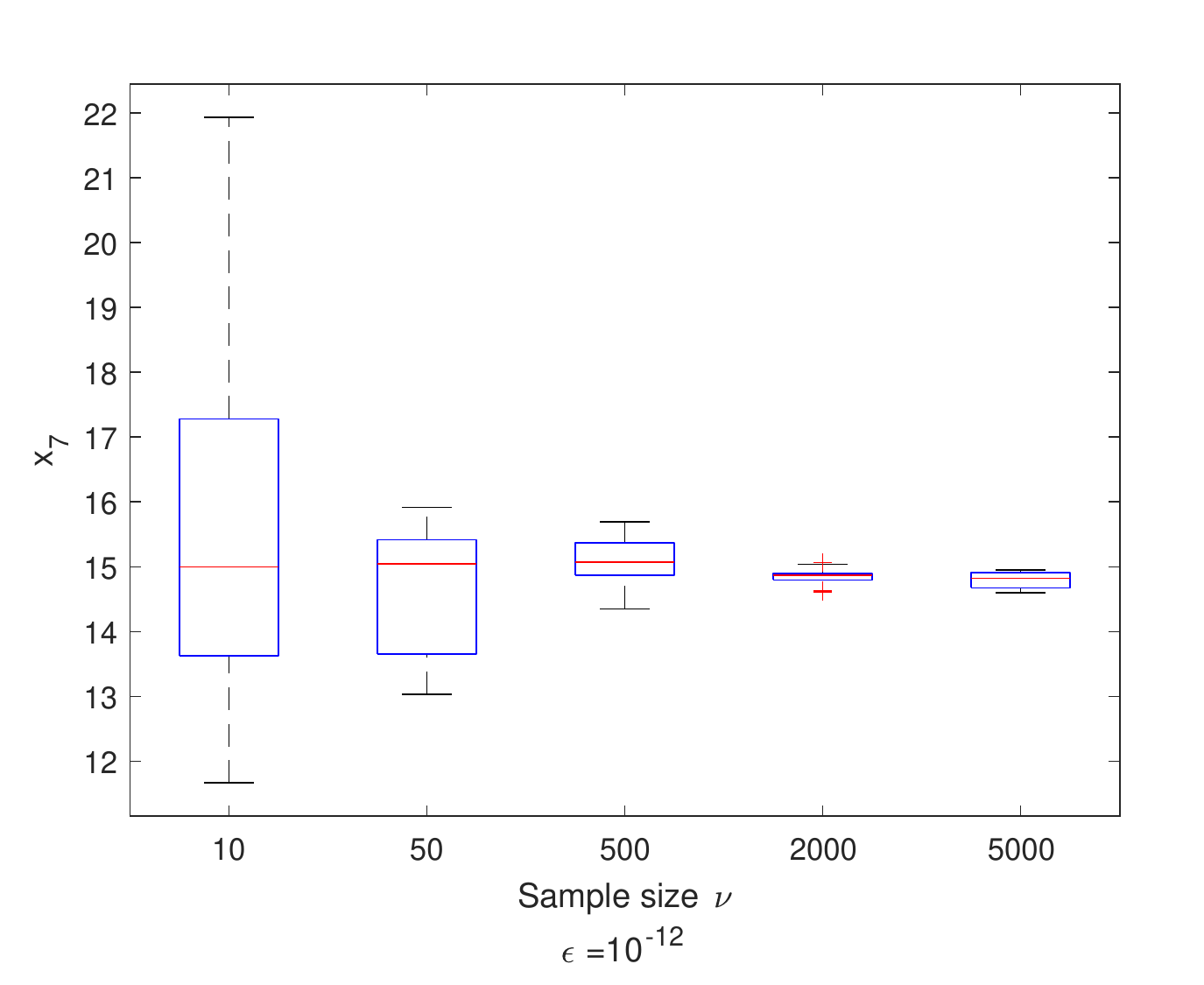}}
   {\includegraphics[height=4.7cm]{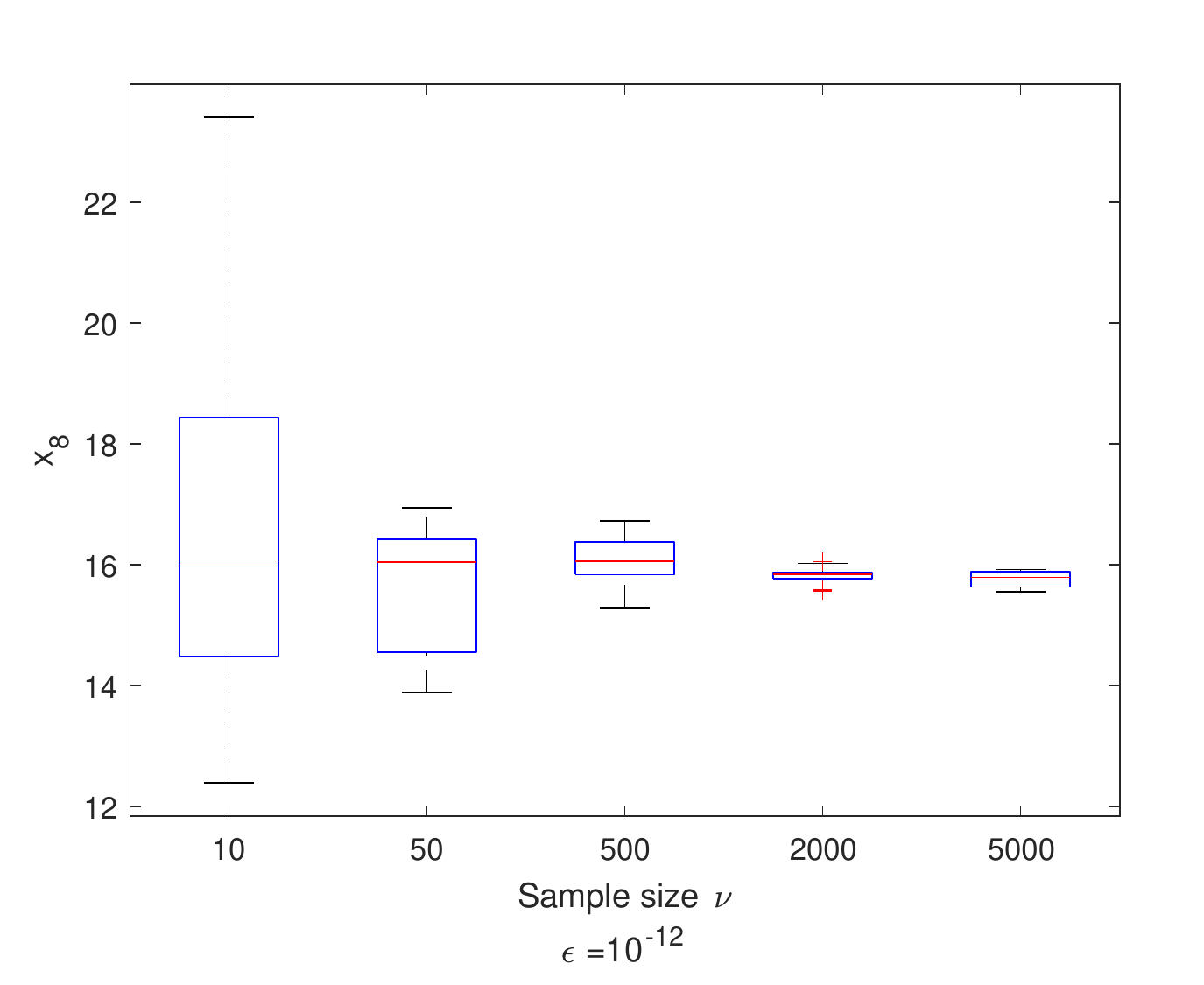}}

   { \includegraphics[height=4.7cm]{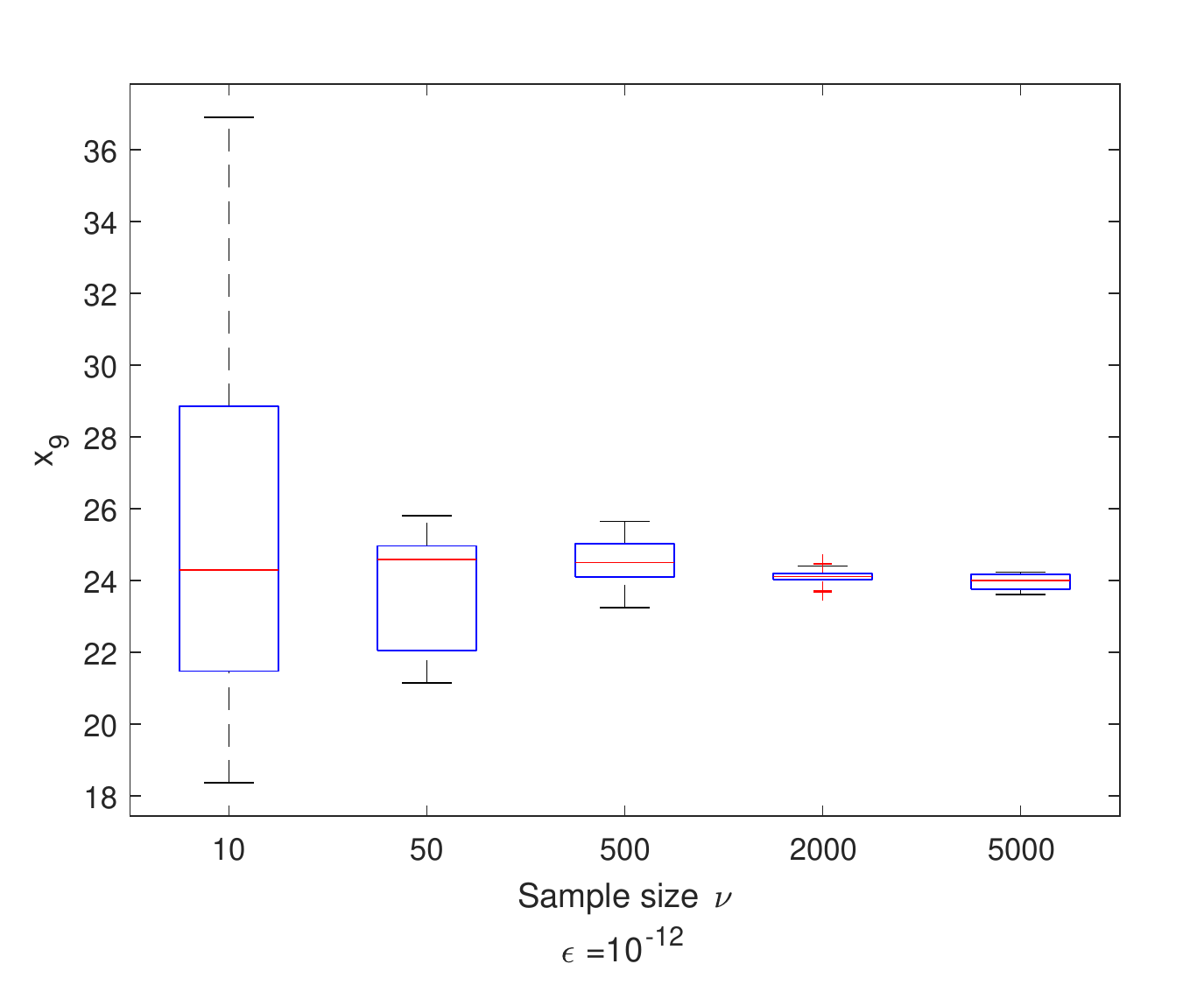}}
   {\includegraphics[height=4.7cm]{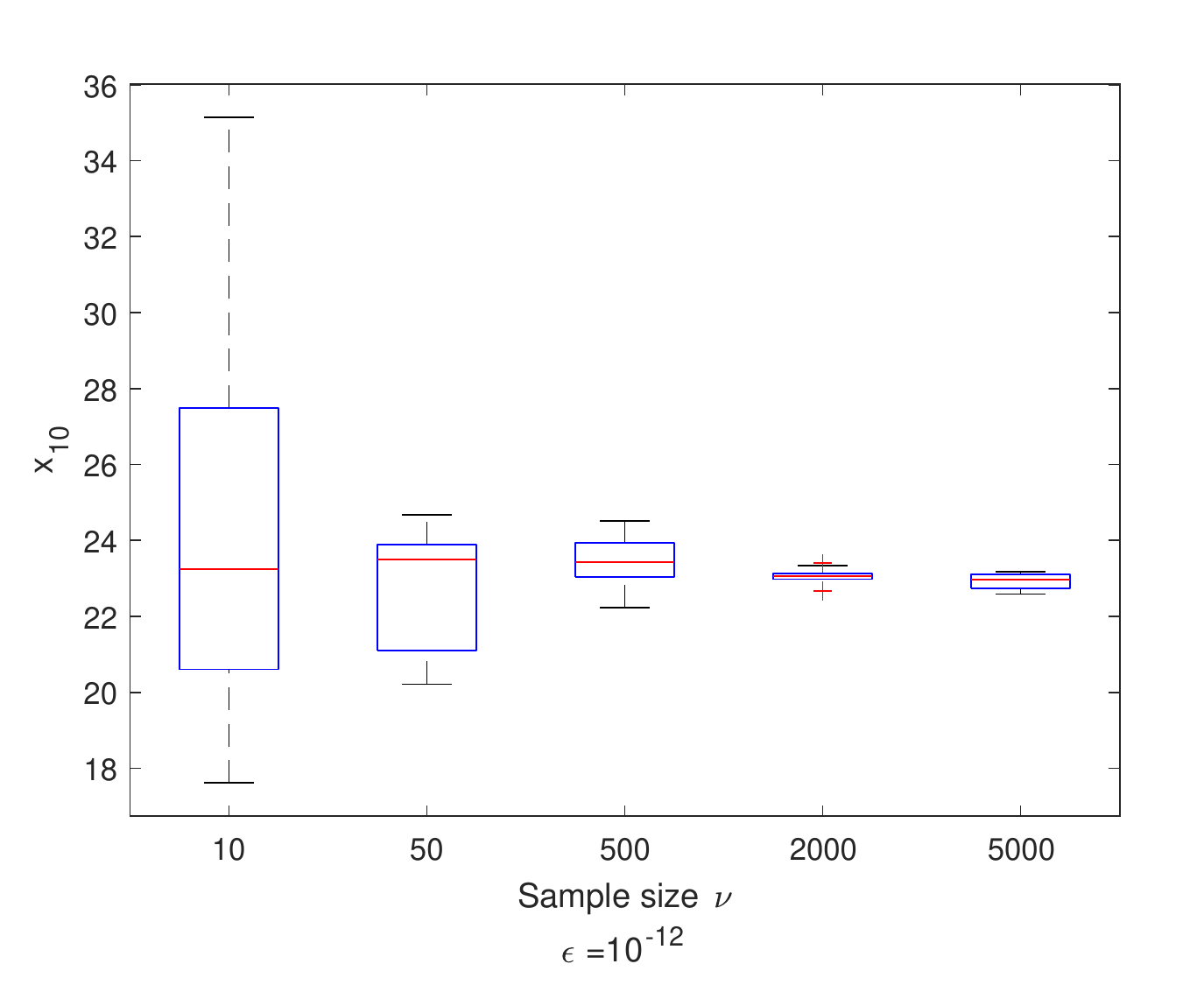}}
   \caption{Convergence property of $x$ with increasing $\nu$,  $J=10$.}
   \label{figure:player-10}
   \end{figure}

\subsection{Crude oil market}
In this subsection, we demonstrate one application of our two-stage stochastic Nash equilibrium model in describing the crude oil market share.
Namely, we investigate the strategies of crude oil exporting agents via solutions of our reformulated SVI problem so as to recreate actual market shares.
Based on historical data on crude oil market, we make in-sample back-tracking test to establish the effectiveness and validity of our model while explaining the market behaviour.
Furthermore, the out-of-sample prediction capability of our model is demonstrated when training data is used to specify model parameters.
From the results of our numerical tests, we conclude that our model is suitable to reproduce, predict and potentially capable to explain stable market shares of crude oil.

\begin{figure}[!htbp]
  \centering
    {\includegraphics[height=6.8cm]{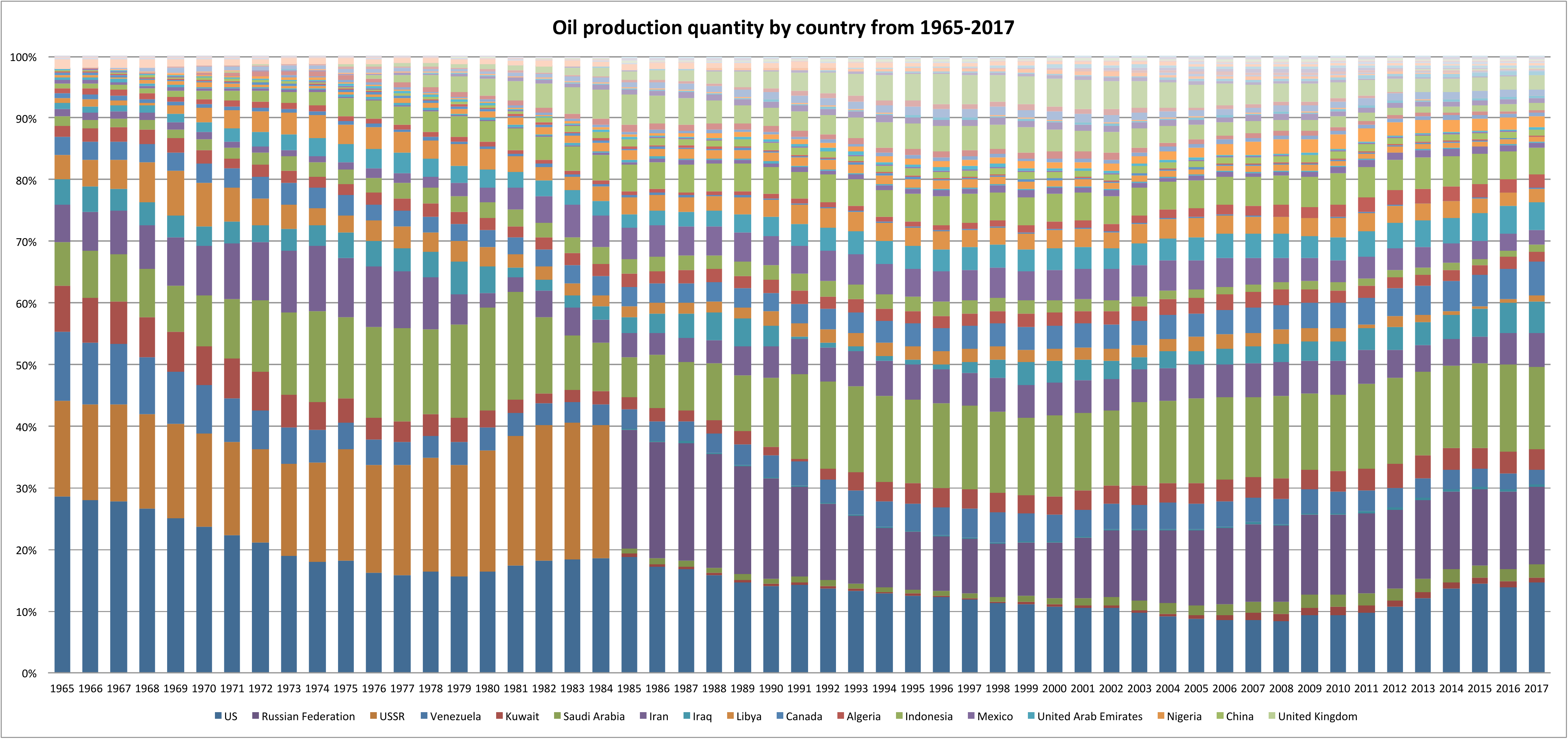}}
  \caption{Market shares of different oil-producing countries, 1965-2017.}
\label{market-share-2008-2017}
\end{figure}

Crude oil market is one of the most widely studied commodity market in the world.
The market has long been described as of an `intermediate between monopoly and perfect competition \cite{H1931economics}'', supported by extensive historical data and market observation.
When the market encounters large sudden events, it took ``oil shocks'' \cite{H1996happened,H2003oil,K2009not}.
Conceptually, consequences of supply and demand fluctuation, along with many other factors, are reflected most directly in dramatic changes in oil price.
One interesting observation is that the market share behaves rather smoothly even during periods of oil shocks \cite{K2009not}.
Majority of the world's crude oil is supplied by a few large oil exporting countries and they are viewed collectively as a finite number of large agents from which price-taking consumers purchase product at the same price \cite{S2009world}.
Hence, it is no surprise that CN approaches have been adopted from earlier days of study in this field, see \cite{S1976exhaustible}.
The study of oil shock gives rise to rich field of economic researches in oil market structure and as well as factors affecting oil price, see \cite{KM2014role,JP2015speculation,BLR2017lags}, etc.
Our model does not attempt to make explanation or future predictions on oil shocks but rather aims to make sense of the stable characteristics of supply-side of oil market share: major agents acting non-cooperatively to achieve market equilibrium.
We focus on the supply factor of oil, while treat other factors, e.g., demand, world economic situation, population, etc., as known information with uncertainty.
Our model is found to be able to reproduce the market shares as well as to forecast future production plans of different oil-producing countries based on historical data.
In short, we firstly use historical data to determine model parameters as well as to approximate distribution for uncertainty in observations of the market.
Model parameters are tested within a fixed sampling window so that the in-sample results obtained by solving our model matches that of training data set.
It then follows that over a short decision horizon in the future, we can adopt the trained model to prediction future production plans.
More specifically, we took i.i.d. samples using a fixed-size rolling window sampling method while the prediction is obtained by solving our model with the adjusted set of parameters obtained by in-sample tests within the training window.

\subsubsection{Data set, parameter selection and uncertainty description}
In order to implement real market data into our model (\ref{M1})-(\ref{M2}), we need to determine parameters $c_j, a_j$ correspond to production costs of $j$-th agent by analyzing training set.
In practice, model parameters are adjusted via a brute force learning process.
More specifically, if the training set consists only one trading day of information, we adjust the parameters so that the solution, i.e., production quantities of $J$ agents, matches the estimated historical observation.
When more data become available, the parameters are adjusted so that the average of solutions is close to the real estimate.
It is found from our numerical implementation that the best fit solutions are obtained when values of $c_j$ and $a_j$ are taken to be inversely proportional to known market share within the sampling period.
It is firstly learned that the common belief of ``lower unit cost would lead to large sale'' does not hold in our model, which also motivates the choice of quadratic production cost in our model.
Moreover, it is found in our numerical experiment that the smaller the quadratic cost parameters $c_j$ the greater market share the agent would have.
This can be understood from the fact that the large oil producing agents are more flexible in adjusting their production quantities as compare to smaller agents.
A more mathematical interpretation is to think of $c_j$ as related to the penalty parameters of the augmented Lagrangian formulation corresponding to production constraints of each agent: production cost is upper bounded with some unit production cost $\tilde{a}_j$.
In this way, one can think of this observation been that the countries has greater reserve of oil suffers less when the production upper bound gets violated.
Nevertheless, we found that the quadratic is more than adequate to reproduce historical observations.

The scenarios of stochastic parameters are taken from empirical distribution of the sampling data set, guided by the analysis of \cite{H2011historical}.
In detail, we draw samples to generate scenarios of market's supply discounting factor $\gamma(\xi)$ and $j$-th agent's adjusted price $p_j(\xi)=p_0(\xi)-h_j(\xi)$.
The data set used in this study are from (i)\emph{Statistical Review of World Energy}\footnote{https://www.bp.com/en/global/corporate/energy-economics/statistical-review-of-world-energy.html}, yearly data from 1984, published every June by bp. Inc; (ii) \emph{Oil Price Dynamics Report}\footnote{https://www.newyorkfed.org/research/policy/oil\_price\_dynamics\_report}, weekly by Federal Reserve Bank of New York since 1986; (iii)\emph{U.S. Energy Information Administration}\footnote{https://www.eia.gov}, weekly and daily spot price of Brent.
The empirical distribution are generated from the contents of oil dynamic report, consists of time series of weekly percentage change of Brent spot price and its corresponding components' contributions, i.e., those from demand, residual, and supply.
More specifically, the percentage change in price in $(k+1)$-th\footnote{$k$ is used to denote the order in time, distinguished from that of i.i.d. samples} week compared to that of the $k$-th week is denoted by $\triangle p^k$, while the contribution of demand and supply changes are denoted as $\triangle D^k$ and $\triangle S^k$ respectively.
The contribution to the price change that does not correspond to demand and supply is denoted as residual $\triangle R^k$.
Therefore, for any week within the report, it holds that the Brent price change is determinately represented by
\begin{equation}\label{pricebreakdown}
\triangle p^k = \triangle D^k + \triangle T^k + \triangle R^k.
\end{equation}
Recall that our expression for (scenario-based) inverse demand function of quantity offered to the market for a given scenario $\xi_\ell$ is in the following form
\[
p^k(\xi_\ell)-\gamma^k(\xi_\ell)T^k(y_{\xi_\ell}).
\]
In accordance with data structure, $p^k(\xi_\ell)$ corresponds to the adjusted predicted price from contributions other than supply, i.e.,
\[
p^k(\xi_\ell) = p_0^k\big(1 + \triangle d^k(\xi_\ell)+\triangle r^k(\xi_\ell)\big),
\]
where $p_0^k$ denotes the known Brent spot price prior to that of the concerned time.
Both $\triangle d^k(\xi_\ell)$ and $\triangle r^k(\xi_\ell)$ are random scenarios taken from empirical distributions of historical demand and residual distributions within the sampling set respectively.
To estimate $\gamma^k(\xi)$, corresponding to $k$-week data, we need the data of spot quantity supplied to the market, $T^k$.
In practice, it is very difficult to obtain reliable data on total supply to the market over short observation window.
Rather, we observe that trust-worthy estimate on daily oil supply based on annual data is available along with the observation that a steady growth of about 1\% per year over the last four decades, according to \emph{Statistical Review of World Energy}.
Therefore, we take random instances of daily market supply $T^k(y_\xi)$ which are taken from a uniformly distributed interval between 99\% and 101\% of yearly based daily estimate.
Then, we can generate a set of data of stochastic excessive supply discount factor $\{\gamma^k(\xi_\ell)\}$,
\[\gamma^k(\xi_\ell)=\frac{|p^k(\xi_\ell)-p_0^k|}{T^k(y_{\xi})},\]
where $p_0^k$ is known with certainty within the testing data set, and the $|\cdot|$ ensures that increase in quantity has a negative influence on price.

We present the results for our numerical experiments on reproduced oil market share.
As shown in Fig \ref{market-share-2008-2017}, the in-sample results over the periods of oil shocks 2007, 2009 and 2014 are reproduced using solely from the available information within that year.
For example, the in-sample experiment of 2009 uses all the available data obtained within 2009, e.g., daily price from 01/01/2009 to 31/12/2009.
For the generation of random instances in-sample, we use weekly price contribution covering the year 2009 to form empirical distributions for supply excluded spot prices $p(\xi_k)$, and discounting factor $\gamma(\xi_k)$, from which we took i.i.d. samples to represent random scenarios.
Hence, for in-sample experiments, our goal is to reproduce known market share with the adjustments of cost parameters $a_j,c_j$ corresponding to $j$-th oil producing agent.

\begin{figure}[!htbp]
  \centering
    {\includegraphics[height=6cm]{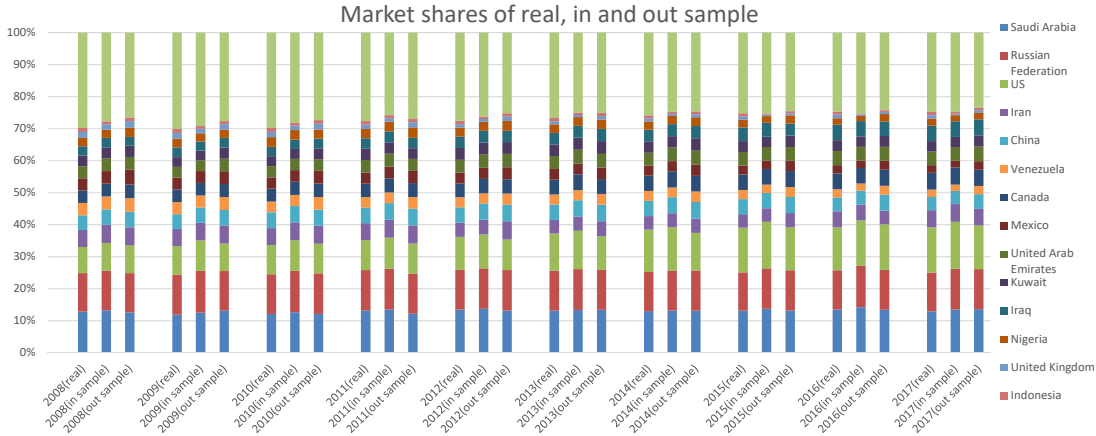}}
  \caption{Real, in-sample and out-of-sample market shares results, 2008-2017.}
\label{market-share-2008-2017}
\end{figure}

The out-of-sample tests cover the period from 2007 to 2017 with the \emph{one year length equivalent of sampling window} in Figure \ref{market-share-2008-2017}.
Note that one year sampling window is adopted since independent numerical results of selected years demonstrated that the market has a short memory in the sense that longer historical data would not provide extra information, at least in the setup of our model and approaches.
Prior to the date of interests, all the historical data are assumed to be available while the rolling window sampling permits the usage of newly acquired data as soon as it becomes available.
From the construction and interpretation of our two-stage model, it is reasonable to assume that cost of oil production remain unchanged over a short decision horizon.
Furthermore, we tested different lengths of the sampling window, especially during times of oil shocks for better in-sample fitting, and the best fit sampling window length is used in out-of-sample tests.
For example, we are interested in obtaining the market share of 2009, during times of recent financial crisis, using the fixed length sampling windows whose length varies from one year to five years.
If we want to predict the production on 01/01/2009 using a sampling window of one year, we took one year sampling window from 01/01/2008 to 31/01/2008 to obtain empirical distributions from which scenarios of $\gamma(\xi_k)$, $p(\xi_k)$ are sampled.
Taking $\nu$ samples, the solutions method follows that described in previous subsection.
For the result on 02/01/2019, the sampling window rolls forward using the data form 02/01/2008 to 01/01/2009 since the last day information becomes known, while the model remains unchanged.
Therefore, for the market share prediction of any given year, we obtained the daily results by taking average of 250 samples, while the yearly estimate is formed by taking the average on daily results.
From Figure \ref{market-share-2008-2017}, it is observed that we can obtain fair good prediction quantities through out the testing period.
However, it is worth mentioning that during the numerical experiments across that decision horizon the computational cost is much higher during periods over oil shocks compare to those of relative stable oil prices.

\section{Conclusion}
\label{sec:6}
A two-stage SVI is used to describe the equilibrium of a convex two-stage non-cooperative multi-agent game under uncertainty.
Sufficient conditions for the existence of solutions of the two-stage SVI are provided.
The numerical implementation is constructed by proposing a regularized SAA method.
Furthermore, we prove the convergence of the regularized SAA method as the regularization parameter tends to zero and the sample size tends to infinity.
A real application of crude oil market equilibrium is studied using our approach.
More specifically, the oligopolistic market is characterized by a two-stage stochastic production and supply planning game and the market share is achieved at the equilibrium.
Numerical results are presented based on historical data over the last fifty years and the effectiveness of our two-stage SVI approach of market share is demonstrated.

\begin{appendices}
\label{Appendix}
\section{Appendix}
Proof of Theorem \ref{th:cf}.
\begin{proof}
By direct computation, we have that
\begin{align*}
&\left(
M^\epsilon(\xi)
\begin{pmatrix}
y^\epsilon\\
\lambda^\epsilon
\end{pmatrix}
+
q(x,\xi)\right)_j\\
&~~~~~~=
\begin{cases}
\gamma(\xi) (y^\epsilon)_j + \gamma(\xi)T^\epsilon  +(\lambda^\epsilon)_j -p_j(\xi), ~ j=1,\ldots,J;\\
x_{j-J} - (y^\epsilon)_{j-J} + \epsilon (\lambda^\epsilon)_{j-J}, ~j=J+1,\ldots,2J.
\end{cases}
\end{align*}
Then, we can rewrite problem \eqref{RM} as below:
\begin{align}
\label{gs3}
\begin{cases}
0\leq (y^\epsilon)_j            \bot \gamma(\xi) (y^\epsilon)_j + \gamma(\xi)T^\epsilon +(\lambda^\epsilon)_j -p_j(\xi) \geq 0,\\
0\leq (\lambda^\epsilon)_j \bot     x_j - (y^\epsilon)_j         +                \epsilon (\lambda^\epsilon)_j                         \geq 0,
\end{cases}
\end{align}
for $j\in\mathcal{J}$.
From the first complementarity condition in \eqref{gs3}, we have $(y^\epsilon)_j$ as follows:
\begin{align}
\label{gs4}
(y^\epsilon)_j=
\begin{cases}
\displaystyle-\frac{\gamma(\xi)T^\epsilon + (\lambda^\epsilon)_j - p_j(\xi)}{\gamma(\xi)}, & \gamma(\xi) T^\epsilon + (\lambda^\epsilon)_j - p_j(\xi) < 0;\\
0, & \gamma(\xi)T^\epsilon + (\lambda^\epsilon)_j - p_j(\xi) \geq 0
\end{cases}
\end{align}
for $j\in\mathcal{J}$.
Similarly, we can derive that
\begin{align}
\label{gs5}
(\lambda^\epsilon)_j=
\begin{cases}
\displaystyle\frac{(y^\epsilon)_j - x_j}{\epsilon}, & (y^\epsilon)_j - x_j > 0;\\
0, &  (y^\epsilon)_j - x_j \leq 0
\end{cases}
\end{align}
for $j\in\mathcal{J}$. Note that $(y^\epsilon)_j=0$ implies $(y^\epsilon)_j=0\leq x_j$, and we have $(\lambda^\epsilon)_j=0$.
Then, based on \eqref{gs4} and \eqref{gs5}, we have for all three cases:
\begin{equation*}
\begin{cases}
(y^\epsilon)_j=0,~(\lambda^\epsilon)_j=0&~\mathrm{for}~j\in\mathcal{I}_1;\\
\displaystyle(y^\epsilon)_j=-\frac{\gamma(\xi) T^\epsilon + (\lambda^\epsilon)_j - p_j(\xi)}{\gamma(\xi)},~(\lambda^\epsilon)_j=                         0                              &~\mathrm{for}~j\in\mathcal{I}_2;\\
\displaystyle(y^\epsilon)_j=-\frac{\gamma(\xi) T^\epsilon + (\lambda^\epsilon)_j - p_j(\xi)}{\gamma(\xi)},~(\lambda^\epsilon)_j= \frac{(y^\epsilon)_j-x_j}{\epsilon} &~\mathrm{for}~j\in\mathcal{I}_3,
\end{cases}
\end{equation*}
where
\begin{align*}
\mathcal{I}_1:=\left\{j\in\mathcal{J}: \gamma(\xi)T^\epsilon + (\lambda^\epsilon)_j - p_j(\xi) \geq 0, (y^\epsilon)_j - x_j \leq 0 \right\},\\
\mathcal{I}_2:=\left\{j\in\mathcal{J}: \gamma(\xi)T^\epsilon + (\lambda^\epsilon)_j - p_j(\xi)   <    0, (y^\epsilon)_j - x_j \leq 0 \right\},\\
\mathcal{I}_3:=\left\{j\in\mathcal{J}: \gamma(\xi)T^\epsilon + (\lambda^\epsilon)_j - p_j(\xi)   <    0, (y^\epsilon)_j - x_j   >   0 \right\}.
\end{align*}
It follows that,
\begin{equation*}
\begin{aligned}
\left((y^\epsilon)_j,(\lambda^\epsilon)_j\right)
=
\begin{cases}
(0,0),                                                                                                                                                                                                                                                            & j\in\mathcal{I}_1;\\
\displaystyle\left(-\frac{\gamma(\xi) T^\epsilon  - p_j(\xi)}{\gamma(\xi)},0\right),                                                                                                                                                             & j\in\mathcal{I}_2;\\
\displaystyle\left(-\frac{\epsilon \gamma(\xi)T^\epsilon - x_j - \epsilon p_j(\xi)}{\epsilon\gamma(\xi) +1}, -\frac{\gamma(\xi)(T^\epsilon+x_j)- p_j(\xi)}{\epsilon\gamma(\xi)+1} \right), & j\in\mathcal{I}_3,
\end{cases}
\end{aligned}
\end{equation*}
which verifies \eqref{CFS}.
For the remaining of the proof, let $j\in\mathcal{I}_2$, we have
\begin{align*}
-\gamma(\xi)(y^\epsilon)_j=\gamma(\xi) T^\epsilon -p_j(\xi)
\end{align*}
and thus
\begin{align}
\label{gs11}
-\gamma(\xi)\sum_{i\in\mathcal{I}_2}(y^\epsilon)_i=|\mathcal{I}_2|\gamma(\xi) T^\epsilon - \sum_{i\in\mathcal{I}_2}p_i(\xi).
\end{align}
Analogously, we can derive from
$$(y^\epsilon)_j=-\frac{\gamma(\xi) T^\epsilon + (\lambda^\epsilon)_j -p_j(\xi)}{\gamma(\xi)}~\mathrm{and}~(\lambda^\epsilon)_j=\frac{(y^\epsilon)_j-x_j}{\epsilon}~\mathrm{for}~j\in\mathcal{I}_3$$
that
\begin{align}
\label{gs12}
-\gamma(\xi) \sum_{i\in\mathcal{I}_3}(y^\epsilon)_i=|\mathcal{I}_3|\gamma(\xi) T^\epsilon + \frac{1}{\epsilon}\sum_{i\in\mathcal{I}_3}(y^\epsilon)_i - \frac{1}{\epsilon}\sum_{i\in\mathcal{I}_3}x_i - \sum_{i\in\mathcal{I}_3}p_i(\xi).
\end{align}
Combining that of \eqref{gs11} and \eqref{gs12}, we obtain
\begin{align*}
-\gamma(\xi)T^\epsilon=(|\mathcal{I}_2|+|\mathcal{I}_3|)\gamma(\xi) T^\epsilon + \frac{1}{\epsilon}\sum_{i\in\mathcal{I}_3}(y^\epsilon)_i - \frac{1}{\epsilon}\sum_{i\in\mathcal{I}_3}x_i- \sum_{i\in\mathcal{I}_2\cup\mathcal{I}_3}p_i(\xi).
\end{align*}
Therefore, we have
\begin{align}
\label{gs13}
\frac{1}{\epsilon}\sum_{i\in\mathcal{I}_3}(y^\epsilon)_i=-(\abs{\mathcal{I}_2}+|\mathcal{I}_3|+1)\gamma(\xi) T^\epsilon +  \frac{1}{\epsilon}\sum_{i\in\mathcal{I}_3}x_i + \sum_{i\in\mathcal{I}_2\cup\mathcal{I}_3}p_i(\xi).
\end{align}
Substituting \eqref{gs13} into \eqref{gs12}, we have derivation
\begin{align*}
 \abs{\mathcal{I}_3}\gamma(\xi) T^\epsilon =& -\left(\gamma(\xi)+\frac{1}{\epsilon}\right) \sum_{i\in\mathcal{I}_3}(y^\epsilon)_i + \frac{1}{\epsilon}\sum_{i\in\mathcal{I}_3}x_i +  \sum_{i\in\mathcal{I}_3} p_i(\xi)\\
=& -\left(\epsilon\gamma(\xi) +1\right) \left( -(\abs{\mathcal{I}_2}+\abs{\mathcal{I}_3}+1)\gamma(\xi) T^\epsilon +  \frac{1}{\epsilon}\sum_{i\in\mathcal{I}_3}x_i + \sum_{i\in\mathcal{I}_2\cup\mathcal{I}_3} {p(\xi)}_i \right) \\
&+ \frac{1}{\epsilon}\sum_{i\in\mathcal{I}_3}x_i + \sum_{i\in\mathcal{I}_3} p_i(\xi) \\
=& \left(\epsilon\gamma(\xi)+1\right)(\abs{\mathcal{I}_2}+\abs{\mathcal{I}_3}+1)\gamma(\xi) T^\epsilon - \gamma(\xi) \sum_{i\in\mathcal{I}_3}x_i - \epsilon\gamma(\xi) \sum_{i\in\mathcal{I}_2\cup\mathcal{I}_3}p_i(\xi) - \sum_{i\in\mathcal{I}_2} p_i(\xi).
\end{align*}
Then, we get
\begin{align*}
\left(\epsilon\gamma(\xi)(\abs{\mathcal{I}_2}+\abs{\mathcal{I}_3}+1) + \abs{\mathcal{I}_2} + 1 \right) \gamma(\xi) T^\epsilon = & \gamma(\xi)\sum_{i\in\mathcal{I}_3}x_i + \epsilon\gamma(\xi)\sum_{i\in\mathcal{I}_2\cup\mathcal{I}_3}p_i(\xi)+ \sum_{i\in\mathcal{I}_2}p_i(\xi),
\end{align*}
that is,
\begin{align*}
T^\epsilon =  \frac{\gamma(\xi) \sum_{i\in\mathcal{I}_3}x_i + \epsilon\gamma(\xi) \sum_{i\in\mathcal{I}_2\cup\mathcal{I}_3}p_i(\xi) + \sum_{i\in\mathcal{I}_2} p_i(\xi)}{\left(\epsilon\gamma(\xi)(\abs{\mathcal{I}_2}+\abs{\mathcal{I}_3}+1) + \abs{\mathcal{I}_2} + 1 \right) \gamma(\xi)}.
\end{align*}
This completes the proof.
\qed\end{proof}

Proof of Proposition \ref{p2}.
\begin{proof}
Let $\hat{z}=(\hat{y},\hat{\lambda})$ be any solution of LCP$(q,M)$ and we have the derivation:
\begin{align*}
0 &\geq (z^\epsilon - \hat{z})^T (M^\epsilon z^\epsilon +q -  (M\hat{z} +q) )\\
&=(z^\epsilon - \hat{z})^T (M^\epsilon z^\epsilon - M \hat{z} )\\
&=(z^\epsilon - \hat{z})^T M (z^\epsilon - \hat{z}) + (z^\epsilon - \hat{z})^T \begin{pmatrix}0 \\ \epsilon \lambda^\epsilon \end{pmatrix}\\
&\geq (z^\epsilon - \hat{z})^T \begin{pmatrix}0 \\ \epsilon \lambda^\epsilon \end{pmatrix}\\
&= \epsilon (\lambda^\epsilon - \hat{\lambda})^T \lambda^\epsilon,
\end{align*}
where the second inequality follows from the positive semidefiniteness of $M$.
Then, we have
\begin{align*}
\Vert \lambda^\epsilon\Vert^2 \leq  \hat{\lambda}^T \lambda^\epsilon \leq \Vert \hat{\lambda}\Vert \Vert\lambda^\epsilon\Vert,
\end{align*}
which implies the boundedness of $\lambda^\epsilon$,
\begin{align}
\label{gs9}
\Vert \lambda^\epsilon\Vert \leq  \Vert \hat{\lambda}\Vert.
\end{align}
It follows from \eqref{gs9} that any accumulation point of $\{\lambda^\epsilon\}$ as $\epsilon \downarrow 0$ is the least $l_2$-norm solution. Since $M$ is positive semidefinite, we know from \cite[Theorem 5.6.2]{RPS1992the} that there is a unique least $l_2$-norm solution. On the other hand, we know from Proposition \ref{p1}, for any fixed $(x,\xi)$, $\hat{y}$ is unique. Therefore, the limit of $z^\epsilon$ exists as $\epsilon\downarrow0$ and converges to the least $l_2$-norm solution of LCP$(q,M)$.

Due to the existence of limit for $z^\epsilon$ as $\epsilon\downarrow0$, \eqref{LNS} can be derived directly from \eqref{CFS}. In what follows, we focus on deriving the expression \eqref{gs7}. To this end, for each $j\in\mathcal{J}$, three cases are discussed:
\begin{align}
\gamma(\xi) T^\epsilon + (\lambda^\epsilon)_j - p_j(\xi) \geq 0,~ (y^\epsilon)_j - x_j \leq 0,\label{gs16}\\
\gamma(\xi) T^\epsilon + (\lambda^\epsilon)_j - p_j(\xi) < 0,~ (y^\epsilon)_j - x_j \leq 0,\label{gs17}\\
\gamma(\xi) T^\epsilon + (\lambda^\epsilon)_i - p_j(\xi) < 0,~ (y^\epsilon)_j - x_j > 0.\label{gs18}
\end{align}

\textbf{Case 1:} If there exists a sequence $\{\epsilon_k\}_{k=1}^\infty$ converging to $0$ such that \eqref{gs16} holds, we have
$$\lim_{k\rightarrow\infty}\big((y^{\epsilon_k})_j, (\lambda^{\epsilon_k})_j\big)=(0,0).$$
Thus, $\abs{(\lambda^{\epsilon_k})_j - \bar{\lambda}_j} = 0$.

\textbf{Case 2:} If there exists a sequence $\{\epsilon_k\}_{k=1}^\infty$ converging to $0$ such that \eqref{gs17} holds, we have an estimation
\begin{align*}
\lim_{k\rightarrow\infty}\big((y^{\epsilon_k})_j, (\lambda^{\epsilon_k})_j\big)&= \lim_{k\rightarrow\infty}\left(-\frac{\gamma(\xi) T^{\epsilon_k} - p_j(\xi)}{\gamma(\xi)},0\right)\\
&=\left(-\frac{\gamma(\xi) \lim_{k\rightarrow\infty}T^{\epsilon_k} - p_j(\xi)}{\gamma(\xi)},0\right)\\
&=\left(-\frac{\gamma(\xi) \bar{T} - p_j(\xi)}{\gamma(\xi)},0\right).
\end{align*}
Thus, $\abs{(\lambda^{\epsilon_k})_j - \bar{\lambda}_j}= 0$.

\textbf{Case 3:} If there exists a sequence $\{\epsilon_k\}_{k=1}^\infty$ converging to $0$ such that \eqref{gs18} holds, we have
\begin{align*}
\lim_{k\rightarrow\infty}\big((y^{\epsilon_k})_j, (\lambda^{\epsilon_k})_j\big)&= \lim_{k\rightarrow\infty}\left(-\frac{ {\epsilon_k}\gamma(\xi) T^{\epsilon_k} - x_j -{\epsilon_k} p_j(\xi)}{{\epsilon_k}\gamma(\xi) +1}, -\frac{\gamma(\xi) (T^{\epsilon_k}+x_j) - p_j(\xi)}{{\epsilon_k}\gamma(\xi)+1} \right)\\
&=\left( x_j, -\gamma(\xi) (\bar{T}+x_j) + p_j(\xi) \right).
\end{align*}
Thus, we have
\begin{align*}
&~~~~\abs{(\lambda^{\epsilon_k})_j-\bar{\lambda}_j}\\
&=\abs{ (\lambda^{\epsilon_k})_j + \gamma(\xi)(\bar{T}+x_j) - p_j(\xi)} \\
&= \abs{ -\frac{\gamma(\xi)(T^{\epsilon_k}+x_j)  - p_j(\xi)}{{\epsilon_k}\gamma(\xi)+1}  +  \gamma(\xi)(\bar{T} + x_j) - p_j(\xi) }\\
&= \frac{ \abs{-\gamma(\xi)(T^{\epsilon_k}+x_j) + p_j(\xi) +\gamma(\xi)(\bar{T} + x_j) - p_j(\xi) + {\epsilon_k}\gamma(\xi) (\gamma(\xi)(\bar{T}+x_j) - p_j(\xi))} }{{\epsilon_k}\gamma(\xi)+1}\\
&\leq\frac{\gamma(\xi)\abs{T^{\epsilon_k}-\bar{T}} + \abs{ \gamma(\xi) (\gamma(\xi)(\bar{T} +x_j) - p_j(\xi))}\epsilon_k  }{{\epsilon_k}\gamma(\xi)+1}.
\end{align*}
Collectively, we know from \textbf{Case 1}, \textbf{Case 2} and \textbf{Case 3} that
\begin{align}
  (y^{\epsilon_k})_j-\bar{y}_j&=0,\label{gs16-1}\\
  (y^{\epsilon_k})_j-\bar{y}_j&= -(T^{\epsilon_k}-\bar{T}),\label{gs17-1}\\
  (y^{\epsilon_k})_j-\bar{y}_j&= \frac{-\gamma(\xi) T^{\epsilon_k} + p_j(\xi) -\gamma(\xi) x_j}{{\epsilon_k}\gamma(\xi) +1}\cdot \epsilon_k.\label{gs18-1}
\end{align}
Furthermore, we have that $T^{\epsilon_k}-\bar{T}\geq0$ always holds.
For the purpose of arriving at a contradiction, we assume $T^{\epsilon_k}-\bar{T} < 0$. Then \eqref{gs17-1} implies that
$$y^{\epsilon_k})_j-\bar{y}_j> 0.$$
Moreover, \eqref{gs16-1} and \eqref{gs18-1} induce
\begin{align*}
(y^{\epsilon_k})_j-\bar{y}_j &= 0,\\
(y^{\epsilon_k})_j-\bar{y}_j &\geq x_j - \bar{y}_j \geq  0,
\end{align*}
respectively. Clearly, we have $T^{\epsilon_k}-\bar{T}\geq 0$, which contradicts our assumption.
In addition, we have
\begin{align*}
T^{\epsilon_k}-\bar{T} &\leq  \frac{-\gamma(\xi) T^{\epsilon_k} + \norm{p(\xi)}_1 + \gamma(\xi) \norm{x}_1 }{{\epsilon_k} \gamma(\xi)+1}\cdot \epsilon_k
\leq  \left( \norm{p(\xi)}_1 + \gamma(\xi) \norm{x}_1 \right) \epsilon_k.
\end{align*}
Then, it follows that
\begin{align*}
&~~~~\abs{(\lambda^{\epsilon_k})_j-\bar{\lambda}_j}\\
&\leq\frac{\gamma(\xi) \abs{T^{\epsilon_k}-\bar{T}} + \abs{ \gamma(\xi) (\gamma(\xi)(\bar{T}+x_j) - p_j(\xi))}\epsilon_k  }{{\epsilon_k}\gamma(\xi)+1} \\
&\leq
\frac{\gamma(\xi) \left( \norm{p(\xi)}_1 + \gamma(\xi) \norm{x}_1 \right) + \abs{ \gamma(\xi) (\gamma(\xi)(\bar{T}+x_j) - p_j(\xi))} }{{\epsilon_k}\gamma(\xi)+1} \cdot \epsilon_k \\
&\leq \left(\gamma(\xi) \left( \norm{p(\xi)}_1 + \gamma(\xi) \norm{x}_1 \right)  +  \gamma(\xi)^2\left(\norm{x}_1 + \frac{\norm{p(\xi)}_1}{\gamma(\xi)}+ \norm{x}_1\right) + \gamma(\xi)\norm{p(\xi)}_1\right) \epsilon_k\\
&\leq 3\left( \gamma(\xi)^2 \norm{x}_1 + \gamma(\xi)\norm{p(\xi)}_1 \right) \epsilon_k,
\end{align*}
where the third inequality follows Lemma \ref{lem2} and the continuity of $T^\epsilon$ that
\begin{align*}
\bar{T} \leq \norm{x}_1 + \frac{\norm{p(\xi)}_1}{\gamma(\xi)}.
\end{align*}
To summarize, for each $j\in\mathcal{J}$, we always have
\begin{equation*}
\abs{(\lambda^\epsilon)_j-\bar{\lambda}_j}\leq 3\left( \gamma(\xi)^2 \norm{x}_1 + \gamma(\xi)\norm{p(\xi)}_1 \right) \epsilon.
\end{equation*}
Then, according to the definition of $l_2$-norm, for any given $x\in\mathbb{R}_+^J$ one can compute
$$\bar{\kappa}(\xi):=3\sqrt{J}\left( \gamma(\xi)^2 \norm{x}_1 + \gamma(\xi) \norm{p(\xi)}_1 \right).$$
\qed\end{proof}
  \end{appendices}

\end{document}